\newcommand{\Hom}{\operatorname{Hom}\nolimits}
\newcommand{\Ext}{\operatorname{Ext}\nolimits}
\newcommand{\AR}{\operatorname{AR}\nolimits}
\newcommand{\Mod}{\operatorname{mod}\nolimits}
\newcommand{\nc}{\operatorname{nc}\nolimits}
\newcommand{\id}{\operatorname{id}\nolimits}
\newcommand{\ZZ}{\mathbb{Z}}
\newcommand{\CC}{\mathcal{C}}
\newcommand{\X}{\mathscr{X}}
\newcommand{\A}{\mathscr{A}}
\newcommand{\p}{\mathcal{P}}
\newcommand{\D}{\mathscr{D}}
\theoremstyle{plain}
\newtheorem{lemma}{Lemma}
\theoremstyle{plain}
\theoremstyle{definition}
\newtheorem{lm}{Lemma}[section]
\newtheorem{corollary}[lm]{Corollary}
\newtheorem{theorem}[lm]{Theorem}
\newtheorem{definition}[lm]{Definition}
\newtheorem{example}[lm]{Example}
\theoremstyle{remark}
\theoremstyle{remark}
\newtheorem{remark}[lm]{Remark}
\begin{document}

\title{Mutation of torsion pairs in cluster categories of Dynkin type $D$
}

\thanks{This work has been carried out in the framework of the research priority programme SPP 1388 Darstellungstheorie of the Deutsche Forschungsgemeinschaft (DFG). The author gratefully acknowledges financial support through the grant HO 1880/5-1.}

\author{Sira Gratz         
}



\maketitle

\begin{abstract}
In cluster categories, mutation of torsion pairs provides a generalisation for the mutation of cluster tilting subcategories, which models the combinatorial structure of cluster algebras. In this paper we present a geometric model for mutation of torsion pairs in the cluster category $\CC_{D_n}$ of Dynkin type $D_n$. Using a combinatorial model introduced by Fomin and Zelevinsky in \cite{FZ03}, subcategories in $\CC_{D_n}$ correspond to rotationally invariant collections of arcs in a regular $2n$-gon, which we call diagrams of Dynkin type $D_n$. Torsion pairs in $\CC_{D_n}$ have been classified by Holm, J\o rgensen and Rubey in \cite{HJR}. They correspond to so-called Ptolemy diagrams of Dynkin type $D_n$, which are diagrams of Dynkin type $D_n$ satisfying a certain  combinatorial condition. We define mutation of a diagram $\X$ of Dynkin type $D_n$ with respect to a compatible diagram $\D$ of Dynkin type $D_n$ consisting of pairwise non-crossing arcs. Such a diagram $\D$ partitions the regular $2n$-gon into cells and mutation of $\X$ with respect to $\D$ can be thought of as a rotation of each of these cells. We show that mutation of Ptolemy diagrams of Dynkin type $D_n$ corresponds to mutation of the corresponding torsion pairs in the cluster category of Dynkin type $D_n$.
\end{abstract}

\section{Introduction}
\label{intro}
An important motivation for mutation stems from cluster theory. Cluster algebras were introduced by Fomin and Zelevinsky in \cite{FZ} to study dual canonical bases and total positivity in semisimple groups from an algebraic and combinatorial viewpoint. In \cite{BMRRT}, Buan, Marsh, Reineke, Reiten and Todorov categorified cluster algebras through cluster categories. By Keller \cite{Keller}, cluster categories are triangulated categories. They mimic the combinatorial idea of cluster algebras: the rigid indecomposable objects correspond to the cluster variables and so-called cluster tilting subcategories take on the role of the clusters. The mutation of clusters  is modelled by the exchange of indecomposable objects in cluster tilting subcategories as introduced by Buan, Iyama, Reiten and Scott in \cite{BIRS}: one replaces one indecomposable object in the cluster tilting subcategory by a unique other indecomposable object such that we again get a cluster tilting subcategory. 

In \cite{IY}, Iyama and Yoshino introduced a more general concept of mutation in triangulated categories. Every subcategory $X$ of $T$ which is closed under summands, finite direct sums and isomorphisms can be mutated in two directions with respect to a rigid subcategory $D \subset T$, i.e.\;a subcategory that has no self-extension; $\Ext^1(D, D) = 0$ -- yielding two subcategories $\mu_D(X)$ and $\mu^-_D(X)$. The exchange of indecomposable objects in cluster tilting subcategories is a special case of this notion of mutation in a triangulated category.

Not all triangulated categories have cluster tilting subcategories, see\;for example Burban, Iyama, Keller and Reiten's paper \cite{BIKR}. However, they always contain torsion pairs, which were introduced for triangulated categories by Iyama and Yoshino in \cite{IY}. A torsion pair in a triangulated category $T$ is a pair of subcategories $(X,Y)$, such that there are no morphisms from $X$ to $Y$ and every object in $T$ can be written as an extension of an object in $Y$ by an object in $X$. Any torsion pair $(X,Y)$ is defined uniquely by the subcategory $X$, which is called the torsion part or equivalently by the subcategory $Y$, which is called the torsion-free part. Cluster tilting subcategories can be viewed as a specialisation of torsion pairs, as every cluster tilting subcategory in $T$ is the torsion part of a torsion pair in $T$.

A natural question to ask is how to define mutation of torsion pairs to provide a generalisation of mutation of cluster tilting subcategories. It was shown by Zhou and Zhu in \cite{ZZ} that if $T$ has Auslander-Reiten triangles, then mutation of a torsion pair of $T$ in the sense of Iyama and Yoshino \cite{IY} with respect to a suitably nice rigid subcategory $D$ in both directions yields another torsion pair of $T$. Furthermore they used the classification of torsion pairs in the cluster category of type $A_{\infty}$ due to Ng \cite{Ng} and of Dynkin type $A_n$ due to Holm, J\o rgensen and Rubey \cite{HJRAn} via Ptolemy diagrams to give a combinatorial description of their mutation in these cases.

In \cite{HJR}, Holm, J\o rgensen and Rubey classified torsion pairs in the cluster category of Dynkin type $D_n$ using Ptolemy diagrams of Dynkin type $D_n$. They used a combinatorial model for Dynkin type $D_n$ which was first introduced by Fomin and Zelevinsky in \cite{FZ03} and which is closely related to the model for the cluster category of Dynkin type $D_n$ by Schiffler \cite{Schiffler}. Consider the regular $2n$-gon $\p_{2n}$. An arc of $\p_{2n}$ is a pair of vertices of $\p_{2n}$ and an arc that is invariant under rotation by $\pi$ is called a diameter. Indecomposable objects in $\CC_{D_n}$ are identified with rotationally symmetric pairs of arcs and diameters of the colours red and green in the regular $2n$-gon. Subcategories which are closed under summands, finite direct sums and isomorphisms correspond to $\pi$-rotation invariant collections of arcs, which we call diagrams of Dynkin type $D_n$. It was shown by Holm, J\o rgensen and Rubey in \cite{HJR} that torsion parts of torsion pairs in $\CC_{D_n}$ correspond to diagrams of Dynkin type $D_n$ with a distinctive combinatorial property, called Ptolemy diagrams of Dynkin type $D_n$.

In this paper we provide a combinatorial description of mutation of torsion pairs in the cluster category $\CC_{D_n}$ of Dynkin type $D_n$.  The situation is more complicated than in types $A_n$ and $A_{\infty}$, because we have to deal with the indecomposable objects in the cluster category which arise from the exceptional vertices of the Dynkin diagram $D_n$. Combinatorially, these correspond to the red and green diameters in the $2n$-gon. We define the mutation of Ptolemy diagrams of Dynkin type $D_n$. Just like mutation of a torsion pair of $\CC_{D_n}$ is with respect to a nice subcategory $D$ of $\CC_{D_n}$, the mutation of the corresponding Ptolemy diagram of Dynkin type $D_n$ is with respect to the subdiagram $\D$ corresponding to the subcategory $D$. Any such subdiagram $\D$ consists of pairwise non-crossing arcs and divides the $2n$-gon into convex polygons which we call $\D$-cells of Dynkin type $D_n$. We define the mutations $\mu_{\D}$ and $\mu^-_{\D}$ as mutually inverse bijections on arcs that do not cross any arcs of $\D$. Thus we can mutate any diagram $\X$ of Dynkin type $D_n$ with respect to any non-crossing diagram $\D$ which is compatible in the sense that no arc in $\X$ crosses any arc in $\D$. Essentially, the mutations $\mu_{\D}$ and $\mu^-_{\D}$ can be thought of as rotating the arcs within each of the $\D$-cells of Dynkin type $D_n$ in a clockwise respectively anti clockwise direction. We show that  mutation of Ptolemy diagrams of Dynkin type $D_n$ provides a combinatorial model for mutation of torsion pairs in the cluster category $\CC_{D_n}$.

The paper is organized as follows. In Section \ref{Combinatorial model} we give an overview over the combinatorial model for the cluster category of Dynkin type $D_n$ as introduced by Fomin and Zelevinsky in \cite{FZ03}. In Section \ref{Torsion pairs} we review the definition of a torsion pair from Iyama and Yoshino's paper \cite{IY}. Furthermore, we recall the classification of torsion pairs in the cluster category $\CC_{D_n}$ of Dynkin type $D_n$ due to Holm, J\o rgensen and Rubey \cite{HJR}. Section \ref{Mutation of torsion pairs} provides an overview over mutation in triangulated categories as introduced by Iyama and Yoshino in \cite{IY} and we recall the main result about mutation of torsion pairs from Zhou and Zhu's paper \cite{ZZ}. In Section \ref{Non-crossing diagrams} we discuss non-crossing diagrams of Dynkin type $D_n$ and provide the combinatorics needed for the definition of mutation of Ptolemy diagrams of Dynkin type $D_n$ in Section \ref{A combinatorial model for mutation}. Moreover, Section \ref{A combinatorial model for mutation} contains the proof of our main result which states that mutation of a torsion pair $(X,Y)$ in the cluster category $\CC_{D_n}$ with respect to a suitable subcategory $D$ of $\CC_{D_n}$ corresponds to mutation of the Ptolemy diagram $\X$ of Dynkin type $D_n$ associated to $X$ with respect to the diagram $\D$ associated to $D$.

\paragraph{Acknowledgements}

The author thanks her supervisor, Thorsten Holm, as well as David Pauksztello for helpful discussions and the advice they have given during the preparation of this paper.

\paragraph{Conventions}

Throughout this paper we work over an algebraically closed field $k$. All triangulated categories are assumed to be $k$-linear, Hom-finite and Krull-Schmidt. All subcategories are assumed to be full and closed under isomorphisms, direct summands and finite direct sums. By abuse of notation we might identify a subcategory $X$ with the collection of its (indecomposable) objects and write $x \in X$ for an object $x$ in $X$. For a subcategory $X$ of a triangulated category $T$, we denote by $X^\perp$ the right Hom-perp of $X$, i.e.\;the subcategory $X^\perp := \{t \in  T| \Hom(x,t) = 0\; \forall x \in X\}$ and dually by $^\perp X$ the subcategory $^\perp X:= \{t \in  T| \Hom(t,x) = 0\; \forall x \in X\}$. When we refer to the Dynkin diagram $D_n$ or related combinatorial concepts, we will always assume $n \geq 4$.

\section{A combinatorial model for the cluster category of Dynkin type $D_n$}
\label{Combinatorial model}

The combinatorial model for Dynkin type $D_n$ introduced by Fomin and Zelevinsky in \cite{FZ03} offers a geometric interpretation of the cluster category of Dynkin type $D_n$. Isomorphism classes of indecomposable objects are represented by rotation-invariant pairs of arcs and diameters in a regular $2n$-gon. As a useful property, this combinatorial model allows for an easy way to determine the dimension of the extension space between two indecomposable objects by counting the number of times the corresponding pairs of arcs or diameters in the regular $2n$-gon cross. First we recall what the Auslander-Reiten quiver of the cluster category of Dynkin type $D_n$ looks like, such as to describe the one-to-one correspondence between isomorphism classes of indecomposable objects, represented by vertices in the Auslander-Reiten quiver, and pairs of arcs and diameters in the regular $2n$-gon.

We denote by $\CC_{D_n}$ the cluster category of Dynkin type $D_n$. It is defined as the orbit category \[D^b(kD_n)/\tau^{-1} \Sigma,\] where $D^b(kD_n)$ is the bounded derived category of finitely generated $kD_n$-modules with Auslander-Reiten translation $\tau$ and shift functor $\Sigma$. By Buan, Marsh, Reineke, Reiten and Todorov \cite{BMRRT} the cluster category $\CC_{D_n}$ is a $k$-linear Hom-finite 2-Calabi-Yau Krull-Schmidt category and by Keller \cite{Keller} it is triangulated with shift functor $\Sigma$. By Happel (\cite{Happel}, Corollary 4.5(i)) the Auslander-Reiten quiver $\AR(D^b(kD_n))$ of the bounded derived category $D^b(kD_n)$ is the repetition quiver $\ZZ D_n$. We label the vertices with the coordinate system first introduced by Iyama in \cite{Iyama}, Definition 4.2 (cf.\;Figure \ref{fig:c-system}). 

The functor $\tau^{-1}\Sigma$ is an auto-equivalence of $D^b(kD_n)$ and thus acts on its set of isomorphism classes of indecomposable objects, i.e.\;on the vertices of the Auslander-Reiten quiver $\AR(D^b(kD_n))$. By Table 1 in Miyachi and Yekutieli's paper \cite{MY},  it is defined on the vertices of $\AR(D^b(kD_n))$ by
$$
\tau^{-1}\Sigma: \begin{cases}
[i,j] \mapsto [i+n,j+n] \text{ for i<j<i+n,}\\
[i,i+n]_{\pm} \mapsto \begin{cases}
[i+n,i+2n]_\pm \text{ if $n$ is even,}\\
[i+n,i+2n]_\mp \text{ if $n$ is odd.}
\end{cases} 
\end{cases}
$$
Identifying the vertices of $\AR(kD_n)$ in the orbits of $\tau^{-1}\Sigma$ gives rise to the Auslander Reiten quiver $\AR(\CC_{D_n})$ of the cluster category of Dynkin type $D_n$. We use the coordinate system induced from the one on $\AR(D^b(kD_n))$ to label the vertices of $\AR(\CC_{D_n})$, where the sections have first coordinates $0$ up to $n-1$, cf.\;Figure \ref{fig:AR(C)}. By abuse of notation we will sometimes omit the index and just write $[i,i+n]$ for either of the vertices $[i,i+n]_+$ and $[i,i+n]_-$, if the sign is not important in the context.

\begin{centering}
\begin{figure}
\begin{tikzpicture} [scale=1.4, font = \footnotesize, font = \sffamily, font=\sansmath\sffamily]

\node (a1) at (0,0) {$[0,2]$};
\node (a2) at (1,1) {$[0,3]$};
\node (af) at (3,3) {$\ldots$};
\node (a3) at (2,2) {$[0,4]$};
\node (a+) at (4,4) {$[0,n]_+$};
\node (a-) at (4,3) {$[0,n]_-$};

\draw[->] (a1) -- (a2);
\draw[->] (a2) -- (a3);
\draw[dotted] (a3) -- (af);
\draw[->] (af) -- (a+);
\draw[->] (af) -- (a-);

\node (b1) at (2,0) {$[1,3]$};
\node (b2) at (3,1) {$[1,4]$};
\node (bf) at (5,3) {$\ldots$};
\node (b3) at (4,2) {$[1,5]$};
\node (b+) at (6,4) {$[1,n+1]_+$};
\node (b-) at (6,3) {$[1,n+1]_-$};

\draw[->] (b1) -- (b2);
\draw[->] (b2) -- (b3);
\draw[dotted] (b3) -- (bf);
\draw[->] (bf) -- (b+);
\draw[->] (bf) -- (b-);

\node (c1) at (4,0) {$[2,4]$};
\node (c2) at (5,1) {$[2,5]$};
\node (cf) at (7,3) {$\ldots$};
\node (c3) at (6,2) {$[2,6]$};
\node (c+) at (8,4) {$[2,n+2]_+$};
\node (c-) at (8,3) {$[2,n+2]_-$};

\draw[->] (c1) -- (c2);
\draw[->] (c2) -- (c3);
\draw[dotted] (c3) -- (cf);
\draw[->] (cf) -- (c+);
\draw[->] (cf) -- (c-);

\draw[->] (a2) -- (b1);
\draw[->] (b2) -- (c1);
\draw[->] (a3) -- (b2);
\draw[->] (b3) -- (c2);
\draw[->] (a+) -- (bf);
\draw[->] (a-) -- (bf);
\draw[->] (b+) -- (cf);
\draw[->] (b-) -- (cf);

\draw[dotted] (af) -- (b3);
\draw[dotted] (bf) -- (c3);

\draw[dotted] (0,2) -- (a2);
\draw[dotted] (1,3) -- (a3);
\draw[dotted] (2,4) -- (af);

\draw[dotted] (c2) -- (6,0);
\draw[dotted] (c3) -- (7,1);
\draw[dotted] (cf) -- (8,2);

\end{tikzpicture}
\caption{The coordinate system on $\AR(D^b(kD_n))$}
\label{fig:c-system}
\end{figure}
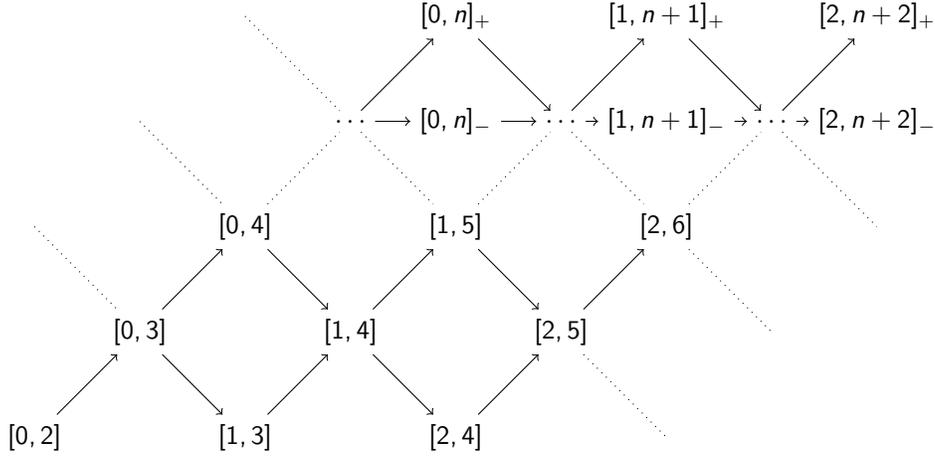
\end{centering}

\begin{centering}
\begin{figure}
\begin{tikzpicture} [scale=1.4, font = \footnotesize, font=\sansmath\sffamily
, font=\sansmath\sffamily
]

\node (b1) at (2,0) {$[0,2]$};
\node (b2) at (3,1) {$[0,3]$};
\node (bf) at (5,3) {$\ldots$};
\node (b3) at (4,2) {$[0,3]$};
\node (b+) at (6,4) {$[0,n]_+$};
\node (b-) at (6,3)  {$[0,n]_-$};

\draw[->] (b1) -- (b2);
\draw[->] (b2) -- (b3);
\draw[dotted] (b3) -- (bf);
\draw[->] (bf) -- (b+);
\draw[->] (bf) -- (b-);

\node (c1) at (4,0) {$\bullet$};
\node (c2) at (5,1) {$\bullet$};
\node (cf) at (7,3) {$\bullet$};
\node (c3) at (6,2) {$\bullet$};
\node (c+) at (8,4) {$\bullet$};
\node (c-) at (8,3) {$\bullet$};

\node (d1) at (8,0) {$[0,2]$};
\node (d2) at (9,1) {$[0,3]$};
\node (df) at (11,3) {$\ldots$};
\node (d3) at (10,2) {$[0,4]$};
\node (d+) at (12,4) {$[0,n]_+$};
\node (d-) at (12,3) {$[0,n]_-$};

\draw[->] (d1) -- (d2);
\draw[->] (d2) -- (d3);
\draw[dotted] (d3) -- (df);
\draw[->] (df) -- (d+);
\draw[->] (df) -- (d-);

\node (e1) at (6,0) {$[n-1,n+1]$};
\node (e2) at (7,1) {$[n-1,n+2]$};
\node (ef) at (9,3) {$\ldots$};
\node (e3) at (8,2) {$[n-1,n+3]$};
\node (e+) at (10,4) {$[n-1,2n-1]_+$};
\node (e-) at (10,3) {$[n-1,2n-1]_-$};

\draw[->] (e1) -- (e2);
\draw[->] (e2) -- (e3);
\draw[dotted] (e3) -- (ef);
\draw[->] (ef) -- (e+);
\draw[->] (ef) -- (e-);

\draw[->] (c1) -- (c2);
\draw[->] (c2) -- (c3);
\draw[dotted] (c3) -- (cf);
\draw[->] (cf) -- (c+);
\draw[->] (cf) -- (c-);

\draw[->] (b2) -- (c1);
\draw[->] (b3) -- (c2);
\draw[->] (b+) -- (cf);
\draw[->] (b-) -- (cf);

\draw[dotted] (bf) -- (c3);

\draw[dotted] (c2) -- (6,0);
\draw[dotted] (c3) -- (7,1);
\draw[dotted] (cf) -- (8,2);

\draw[dotted] (c2) -- (6,0);
\draw[dotted] (c3) -- (7,1);
\draw[dotted] (cf) -- (8,2);

\draw[dotted] (c-) -- (ef);
\draw[dotted] (c+) -- (ef);

\draw[->] (e2) -- (d1);
\draw[->] (e3) -- (d2);
\draw[dotted] (ef) -- (d3);
\draw[->] (e-) -- (df);
\draw[->] (e+) -- (df);

\end{tikzpicture}
\caption{The coordinate system on $\AR(\CC_{D_n})$}
\label{fig:AR(C)}
\end{figure}
\end{centering}

Consider now the regular $2n$-gon $\p_{2n}$ with vertices labelled consecutively in an anti clockwise direction by $0, 1$, $\ldots$, $2n-1$. Throughout we will calculate modulo $2n$. The isomorphism classes of indecomposable objects in $\CC_{D_n}$ correspond to so-called pairs of arcs and coloured diameters in $\p_{2n}$.
\begin{definition}
An {\em arc} of $\p_{2n}$ is a pair of vertices $(i,j)$ of $\p_{2n}$ with $i \neq j$. An arc of the form $(i,i+1)$, for $i = 0, \ldots, 2n-1$, is called an {\em edge} of $\p_{2n}$. An {\em internal arc} of $\p_{2n}$ is an arc of $\p_{2n}$ that is not an edge. Each arc $(i,j)$ of $\p_{2n}$ has a partner $(i+n,j+n)$ which is obtained from $(i,j)$ by rotation by $\pi$. An arc of $\p_{2n}$ is called {\em diameter} if it is $\pi$-rotation invariant, i.e.\;if it is of the form $(i,i+n)$. A non-diameter arc $(i,j)$ together with its partner $(i+n,j+n)$ is called a {\em pair of arcs} and denoted by $\overline{(i,j)}$. For each diameter $(i,i+n)$ we introduce two coloured diameters: a red one $\overline{(i,i+n)}_r$ and a green one $\overline{(i,i+n)}_g$. By abuse of notation we sometimes omit the index and just write $\overline{(i,i+n)}$ for a coloured diameter, which could be either red or green. If we omit the overline and simply write $(i,i+n)$, we refer to the diameter $(i,i+n)$ without a colour. We set
$$
\mathcal{E}(\p_{2n}) := \{\text{pairs of edges of }\p_{2n}\}
$$
and
$$
\A(\p_{2n}) := \{\text{ pairs of arcs and coloured diameters of }\p_{2n}\} \setminus \mathcal{E}(\p_{2n}).
$$
\end{definition}

\begin{definition}\label{D:bijection}
Denote by $\AR(\CC_{D_n})_0$ the set of vertices of the Auslander Reiten quiver $\AR(\CC_{D_n})$.
Let $b: \AR(\CC_{D_n})_0 \to \A(\p_{2n})$ be the bijection defined by
$$
b: \begin{cases}
[i,j] \mapsto \overline{(i,j)} \text{ for $0 \leq i<j<i+n$,}\\
[i,i+n]_+ \mapsto \begin{cases}
\overline{(i,i+n)}_g \text{ if $i = 0 \mod 2$,}\\
\overline{(i,i+n)}_r \text{ if $i = 1 \mod 2$,}
\end{cases} \text{ for } 0 \leq i \leq n-1 \\
[i,i+n]_- \mapsto \begin{cases}
\overline{(i,i+n)}_r \text{ if $i = 0 \mod 2$,}\\
\overline{(i,i+n)}_g \text{ if $i = 1 \mod 2$}
\end{cases} 
\text{ for } 0 \leq i \leq n-1.
\end{cases}
$$
Via this bijection, we can associate to each pair of arcs or coloured diameter $\overline{(i,j)} \in \A(\p_{2n})$ an indecomposable object $m_{\overline{(i,j)}}$ in $\CC_{D_n}$, which is unique up to isomorphism. The vertex $[i,j]=b(\overline{(i,j)})$ in the Auslander-Reiten quiver $\AR(\CC_{D_n})$ represents the isomorphism class of the module $m_{\overline{(i,j)}}$. 
\end{definition}

The map $b$ defined in Definition \ref{D:bijection} provides the connection to the combinatorial model for Dynkin type $D_n$. It sends vertices without a sign in $\AR(\CC_{D_n})$ to pairs of internal arcs of $\p_{2n}$ and alternatingly matches the vertices $[i,i+n]_{\pm}$ to red and green diameters. Figure \ref{fig:matching} illustrates the matching of vertices with a sign to coloured diameters.

\begin{centering}
\begin{figure}
\begin{tikzpicture} [scale=1, font = \footnotesize, font=\sansmath\sffamily
]
\tikzstyle{ann} = [draw=none,fill=none,right]
\node (a1) at (0,0) {$\bullet$};
\node (a2) at (1,1) {$\bullet$};
\node (af) at (3,3) {$\bullet$};
\node (a3) at (2,2) {$\bullet$};
\node[draw, circle, dashed] (a+) at (4,4) {$\bullet$};
\node[draw, circle] (a-) at (4,3) {$\bullet$};

\draw[->] (a1) -- (a2);
\draw[->] (a2) -- (a3);
\draw[dotted] (a3) -- (af);
\draw[->] (af) -- (a+);
\draw[->] (af) -- (a-);

\node (b1) at (2,0) {$\bullet$};
\node (b2) at (3,1) {$\bullet$};
\node (bf) at (5,3) {$\bullet$};
\node (b3) at (4,2) {$\bullet$};
\node[draw, circle] (b+) at (6,4) {$\bullet$};
\node[draw, circle, dashed] (b-) at (6,3) {$\bullet$};

\draw[->] (b1) -- (b2);
\draw[->] (b2) -- (b3);
\draw[dotted] (b3) -- (bf);
\draw[->] (bf) -- (b+);
\draw[->] (bf) -- (b-);

\node (c1) at (4,0) {$\bullet$};
\node (c2) at (5,1) {$\bullet$};
\node (cf) at (7,3) {$\bullet$};
\node (c3) at (6,2) {$\bullet$};
\node[draw, circle, dashed] (c+) at (8,4) {$\bullet$};
\node[draw, circle] (c-) at (8,3) {$\bullet$};

\node (d1) at (8,0) {$\bullet$};
\node (d2) at (9,1) {$\bullet$};
\node (df) at (11,3) {$\bullet$};
\node (d3) at (10,2) {$\bullet$};
\node[draw, circle, dashed] (d+) at (12,4) {$\bullet$};
\node[draw, circle] (d-) at (12,3) {$\bullet$};

\draw[->] (d1) -- (d2);
\draw[->] (d2) -- (d3);
\draw[dotted] (d3) -- (df);
\draw[->] (df) -- (d+);
\draw[->] (df) -- (d-);

\node (e1) at (6,0) {$\bullet$};
\node (e2) at (7,1) {$\bullet$};
\node (ef) at (9,3) {$\bullet$};
\node (e3) at (8,2) {$\bullet$};
\node[draw, circle] (e+) at (10,4) {$\bullet$};
\node[draw, circle, dashed] (e-) at (10,3) {$\bullet$};

\draw[->] (e1) -- (e2);
\draw[->] (e2) -- (e3);
\draw[dotted] (e3) -- (ef);
\draw[->] (ef) -- (e+);
\draw[->] (ef) -- (e-);

\draw[->] (c1) -- (c2);
\draw[->] (c2) -- (c3);
\draw[dotted] (c3) -- (cf);
\draw[->] (cf) -- (c+);
\draw[->] (cf) -- (c-);

\draw[->] (a2) -- (b1);
\draw[->] (b2) -- (c1);
\draw[->] (a3) -- (b2);
\draw[->] (b3) -- (c2);
\draw[->] (a+) -- (bf);
\draw[->] (a-) -- (bf);
\draw[->] (b+) -- (cf);
\draw[->] (b-) -- (cf);

\draw[dotted] (af) -- (b3);
\draw[dotted] (bf) -- (c3);

\draw[dotted] (c2) -- (6,0);
\draw[dotted] (c3) -- (7,1);
\draw[dotted] (cf) -- (8,2);

\draw[dotted] (c2) -- (6,0);
\draw[dotted] (c3) -- (7,1);
\draw[dotted] (cf) -- (8,2);

\draw[dotted] (c-) -- (ef);
\draw[dotted] (c+) -- (ef);

\draw[->] (e2) -- (d1);
\draw[->] (e3) -- (d2);
\draw[dotted] (ef) -- (d3);
\draw[->] (e-) -- (df);
\draw[->] (e+) -- (df);

\end{tikzpicture}
\caption{The vertices in $\AR(\CC_{D_n})$ marked with a dashed circle get matched to green diameters, those marked with a continuous circle get matched to red diameters}
\label{fig:matching}
\end{figure}
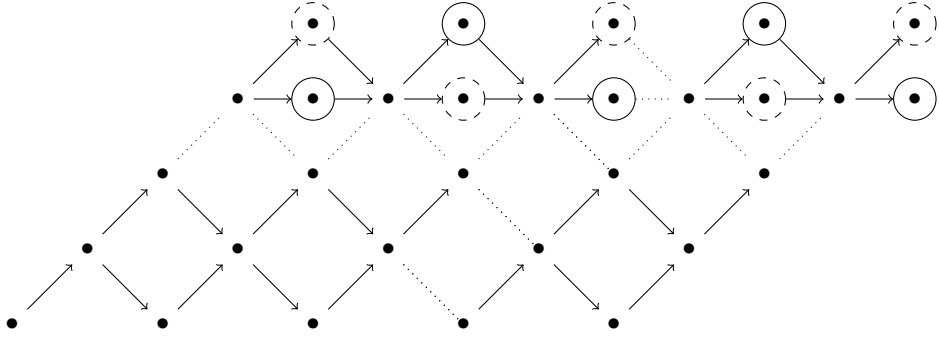
\end{centering}

\begin{definition}[diagram of Dynkin type $D_n$]
A subset of $\A(\p_{2n})$ is called a {\em diagram of Dynkin type $D_n$}.
\end{definition}

\begin{remark}\label{rotation-invariance}
\begin{itemize}
\item
{Any diagram of Dynkin type $D_n$ is invariant under rotation by $\pi$. }

\item
{The bijection $b: \AR(\CC_{D_n})_0 \to \A(\p_{2n})$ induces a one-to-one correspondence between subcategories of $\CC_{D_n}$ and diagrams of Dynkin type $D_n$, where the subcategory consisting of the zero object is assumed to correspond to the empty diagram.}
\end{itemize}
\end{remark}

\begin{definition}\label{crossing arcs}
\begin{itemize}
\item{Two arcs $(i,j)$ and $(k,l)$ are said to {\em cross}, if $i,j,k$ and $l$ are pairwise distinct and they lie on the boundary of $\p_{2n}$ in the order $i,k,j,l$ or $k,i,l,j$ when moving in an anti clockwise direction.}
\item{The pairs of arcs $\overline{(i,j)}$ and $\overline{(k,l)}$ are said to {\em cross once}, if the arc $(i,j)$ crosses either $(k,l)$ or $(k+n,l+n)$. They are said to {\em cross twice}, if the arc $(i,j)$ crosses both $(k,l)$ and $(k+n,l+n)$.}
\item{A coloured diameter $\overline{(i,i+n)}$ and a pair of arcs $\overline{(k,l)}$ are said to {\em cross (once)}, if the arc $(i,i+n)$ crosses $(k,l)$.}
\item{Two coloured diameters $\overline{(i,i+n)}_r$ and $\overline{(j,j+n)}_g$ of different colours are said to {\em cross (once)} if the arcs $(i,i+n)$ and $(j,j+n)$ cross. Two diameters of the same colour do not cross.}
\end{itemize}
Figure \ref{fig:crossing} illustrates the crossing of arcs. Throughout this paper, when drawing diagrams we will draw green diameters as wriggly lines and red diameters as straight lines.
\end{definition}

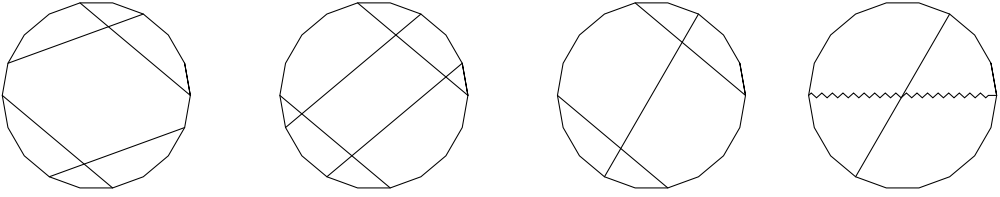
\begin{figure}
\centering{
\begin{tikzpicture}[scale=2.5,cap=round,>=latex, font=\footnotesize, font=\sansmath\sffamily
]
	
	\draw (100:0.5) -- (0:0.5);
	\draw (280:0.5) -- (180:0.5);

	\draw (60:0.5) -- (160:0.5);
	\draw (240:0.5) -- (340:0.5);
	
        	\draw (0:0.5) -- (20:0.5) -- (40:0.5) -- (60:0.5) -- (80:0.5) -- (100:0.5) -- (120:0.5)  -- (140:0.5) -- (160:0.5) -- (180:0.5) -- (200:0.5) -- (220:0.5) -- (240:0.5) -- (260:0.5) -- (280:0.5) -- (300:0.5) -- (320:0.5) -- (340:0.5) -- (360:0.5) -- (380:0.5);
  \begin{scope}[xshift=42]

	\draw (100:0.5) -- (0:0.5);
	\draw (280:0.5) -- (180:0.5);

	\draw (60:0.5) -- (200:0.5);
	\draw (240:0.5) -- (20:0.5);
	
        	\draw (0:0.5) -- (20:0.5) -- (40:0.5) -- (60:0.5) -- (80:0.5) -- (100:0.5) -- (120:0.5)  -- (140:0.5) -- (160:0.5) -- (180:0.5) -- (200:0.5) -- (220:0.5) -- (240:0.5) -- (260:0.5) -- (280:0.5) -- (300:0.5) -- (320:0.5) -- (340:0.5) -- (360:0.5) -- (380:0.5);
\end{scope}
\begin{scope}[xshift = 84]

	\draw (100:0.5) -- (0:0.5);
	\draw (280:0.5) -- (180:0.5);

	\draw (60:0.5) -- (240:0.5);
	
        	\draw (0:0.5) -- (20:0.5) -- (40:0.5) -- (60:0.5) -- (80:0.5) -- (100:0.5) -- (120:0.5)  -- (140:0.5) -- (160:0.5) -- (180:0.5) -- (200:0.5) -- (220:0.5) -- (240:0.5) -- (260:0.5) -- (280:0.5) -- (300:0.5) -- (320:0.5) -- (340:0.5) -- (360:0.5) -- (380:0.5);
\end{scope}
\begin{scope}[xshift = 122]

\draw[decorate, decoration={
    zigzag,
    segment length=4,
    amplitude=.9,post=lineto,
    post length=2pt
}] (180:0.5) -- (0:0.5);

	\draw (60:0.5) -- (240:0.5);
	
        	\draw (0:0.5) -- (20:0.5) -- (40:0.5) -- (60:0.5) -- (80:0.5) -- (100:0.5) -- (120:0.5)  -- (140:0.5) -- (160:0.5) -- (180:0.5) -- (200:0.5) -- (220:0.5) -- (240:0.5) -- (260:0.5) -- (280:0.5) -- (300:0.5) -- (320:0.5) -- (340:0.5) -- (360:0.5) -- (380:0.5);
\end{scope}
\end{tikzpicture}
}
\caption{The pictures illustrate from left to right: Two pairs of arcs crossing once, two pairs of arcs crossing twice, a diameter crossing a pair of arcs and two diameters of different colour crossing}
  \label{fig:crossing}
\end{figure}

The combinatorial model of the cluster category of Dynkin type $D_n$ obtained through the bijection $b: \AR(\CC_{D_n})_0 \to \A(\p_{2n})$ is closely related to the model by Schiffler from \cite{Schiffler}, where triangulations of the punctured disc were used to first combinatorially describe the cluster category of Dynkin type $D_n$. In particular, Proposition 1.3 in Schiffler's paper \cite{Schiffler} remains valid and can be restated as follows.

\begin{lemma} \label{intersection number}
Let $\overline{(i,j)}$ and $\overline{(k,l)}$ be diameters or pairs of arcs in $\A(\p_{2n})$ with corresponding indecomposable objects $m_{\overline{(i,j)}}$ and $m_{\overline{(k,l)}}$ in $\CC_{D_n}$. Then the number of times $\overline{(i,j)}$ and $\overline{(k,l)}$ cross is equal to the dimension $\dim \Ext_{\CC_{D_n}}^1(m_{\overline{(i,j)}},m_{\overline{(k,l)}})$ of the extension space.
\end{lemma}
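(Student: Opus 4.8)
The plan is to reduce the statement to Proposition 1.3 of Schiffler's paper \cite{Schiffler} by making the identification between the model used here and Schiffler's model explicit and checking that it preserves both the indecomposable objects and the crossing numbers. Schiffler works in the disc with $n$ marked points on its boundary and a single puncture in the centre; indecomposable objects of $\CC_{D_n}$ are identified with homotopy classes of arcs in this punctured disc, where arcs ending at the puncture carry a tag (plain or notched) encoding the two exceptional vertices, and his Proposition 1.3 asserts precisely that the minimal intersection number of two such arcs computes $\dim \Ext^1_{\CC_{D_n}}$ between the associated objects. Thus the task is purely one of translation: transport that result across a dictionary between the two pictures.

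First I would set up the covering map relating the two pictures. The regular $2n$-gon $\p_{2n}$ carries the $\pi$-rotation, a free action of $\ZZ/2$ away from the centre; the quotient is a disc with $n$ boundary marked points together with one puncture coming from the centre. Under this quotient a non-diameter pair of arcs $\overline{(i,j)}$, being a $\pi$-orbit of arcs, descends to a single ordinary arc, while a diameter $(i,i+n)$, which passes through the centre, descends to an arc ending at the puncture. The two colours introduced in the definition of $\A(\p_{2n})$ are matched with the plain and notched tagging at the puncture, the alternating assignment of colours to the vertices $[i,i+n]_\pm$ in the bijection $b$ of Definition \ref{D:bijection} being chosen exactly so that this matching is consistent.

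Next I would verify that the composite of $b$ with this quotient map agrees with Schiffler's identification of arcs with indecomposable objects. Both identifications are pinned down by the labelling of the Auslander--Reiten quiver $\AR(\CC_{D_n})$, so it suffices to compare the two coordinate systems on the vertices; the formula for $\tau^{-1}\Sigma$ recalled above, together with the parity bookkeeping in $b$, reduces this to a finite check. The only genuinely delicate point here is the behaviour of the $\pm$ vertices $[i,i+n]_\pm$ under the shift, which differs according to the parity of $n$, and one must confirm that the colour assignment in $b$ is compatible with this across the $\tau^{-1}\Sigma$-orbits.

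Finally I would check that the crossing numbers of Definition \ref{crossing arcs} coincide with the intersection numbers downstairs. For two non-diameter pairs this is immediate: a crossing of $(i,j)$ with $(k,l)$ or with its partner $(k+n,l+n)$ descends to exactly one transverse intersection of the projected arcs, so ``crossing once'' and ``crossing twice'' record the two possible intersection numbers of ordinary arcs; the diameter-against-pair case is analogous. The main obstacle is the case of two diameters: in the punctured disc two arcs both ending at the puncture have a forced incidence there, and whether it counts as an intersection is governed by their tags, equal tags giving no crossing and opposite tags giving one. This is exactly the convention that two coloured diameters of the same colour do not cross while two of different colours cross once, so once the colour/tag dictionary of the first step is in place the diameter-versus-diameter case matches as well. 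With all crossing types accounted for, Proposition 1.3 of \cite{Schiffler} yields the claimed equality.
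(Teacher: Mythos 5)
Your proposal is correct and follows essentially the same route as the paper, which offers no separate proof at all: it simply asserts that Proposition 1.3 of Schiffler's paper \cite{Schiffler} ``remains valid and can be restated'' in the $2n$-gon model, leaving the translation implicit. Your elaboration -- the quotient of $\p_{2n}$ by the $\pi$-rotation onto the punctured disc, the matching of colours with Schiffler's tags via the bijection $b$, and the case-by-case check that the crossing conventions of Definition \ref{crossing arcs} (including the same-colour versus different-colour diameter convention at the puncture) agree with Schiffler's intersection numbers -- is precisely the justification the paper is relying on.
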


\section{Torsion pairs in the cluster category of Dynkin type $D_n$} 
\label{Torsion pairs}

Torsion pairs in triangulated categories were introduced by Iyama and Yoshino in \cite{IY}. The idea follows the classical concept of torsion theory in abelian categories, which goes back to Dickson \cite{Dickson}. Torsion pairs in the cluster category of Dynkin type $D_n$ have been classified by Holm, J\o rgensen and Rubey in \cite{HJR} via the combinatorial model described in Section \ref{Combinatorial model}. First, we recall the definition of a torsion pair in a triangulated category.

\begin{definition}[\cite{IY}]
A {\em torsion pair} in a triangulated category $T$ with shift functor $\Sigma$ is a pair $(X,Y)$ of subcategories in $T$ such that
\begin{itemize}
\item[T1]{$\Hom(x,y) = 0$ for all $x \in X$ and $y \in Y$.}
\item[T2]{For each $t \in T$ there exists a distinguished triangle
$$
x \to t \to y \to \Sigma x
$$
with $x \in X$ and $y \in Y$.}
\end{itemize}
In a torsion pair $(X,Y)$, the subcategory $X$ is called the {\em torsion part} and the subcategory $Y$ is called the {\em torsion-free part} of $(X,Y)$. 
\end{definition}

Recall that every torsion pair $(X,Y)$ is uniquely determined by its torsion part $X$:
$$
Y = X^\perp := \{t \in  T| \Hom(x,t) = 0\; \forall x \in X\},
$$
or equivalently by its torsion-free part: $X = \leftidx{^\perp}Y$. By  Proposition 2.3 in Iyama and Yoshino's paper \cite{IY}, a contravariantly finite subcategory $X$ of $T$ is the torsion part of a torsion pair in $T$ if and only if $^\perp (X^\perp) = X$. Contravariantly finite means that for every object $t \in T$ there is a morphism $f: x \to t$, such that $x \in X$ and any other morphism from the subcategory $X$ into $t$ factors through $f$. Because $\CC_{D_n}$ has only finitely many indecomposable objects, every subcategory $X$ of $\CC_{D_n}$ is contravariantly finite. Thus the torsion parts of torsion pairs in $\CC_{D_n}$ are precisely those subcategories $X$ which satisfy $^\perp (X^\perp) = X$. This condition translates rather nicely to the combinatorial model: Let $X$ be a subcategory of $T$ and let $\X$ be the corresponding diagram of Dynkin type $D_n$. Set 
\begin{eqnarray*}
\nc \X = \{\overline{(i,j)} \in \A(\p_{2n})| \overline{(i,j)} \text{ crosses no element of } \X\}.
\end{eqnarray*}
By Proposition 3.5 in Holm, J\o rgensen and Rubey's paper \cite{HJR}, the pair $(X,X^\perp)$ is a torsion pair in $\CC_{D_n}$ if and only if $\X = \nc \nc \X$. In light of this result, the problem of classifying torsion pairs in $\CC_{D_n}$ boils down to finding a combinatorial description for diagrams $\X$ of Dynkin type $D_n$ satisfying $\X = \nc \nc \X$. Diagrams satisfying this condition are called Ptolemy diagrams of Dynkin type $D_n$ and can be described combinatorially by considering all crossing elements. Roughly speaking, whenever two pairs of arcs or diameters in a Ptolemy diagram cross, their convex hull has to be contained in the Ptolemy diagram as well.

\begin{definition}[Ptolemy diagram of Dynkin type $D_n$, \cite{HJR} , Definition 4.1]
Let $\X$ be a diagram of Dynkin type $D_n$. It is called a {\em Ptolemy diagram of Dynkin type $D_n$} if for any $\overline{(i,j)}, \overline{(k,l)} \in \X$, such that the arcs $(i,j)$ and $(k,l)$ cross, the following holds.
\begin{itemize}
\item[Pt1]{If $\overline{(i,j)}$ and $\overline{(k,l)}$ are pairs of arcs, then each of $\overline{(i,k)}$, $\overline{(k,j)}$, $\overline{(j,l)}$ and $\overline{(l,i)}$ lies in $\X \cup \mathcal{E}(\p_{2n})$. If any of $(i,k)$, $(k,j)$, $(j,l)$ or $(l,i)$ is a diameter, then both the red and the green copy of that diameter lie in $\X$.
}
\item[Pt2]{If $\overline{(i,j)}$ and $\overline{(k,l)}$ are diameters of different colour, then $\overline{(i,k)} = \overline{(j,l)}$ and $\overline{(k,j)} = \overline{(l,i)}$ lie in $\X \cup \mathcal{E}(\p_{2n})$.
}
\item[Pt3]{If $\overline{(i,j)}$ is a diameter and $\overline{(k,l)}$ is a pair of arcs, then those of $\overline{(i,k)}$, $\overline{(k,j)}$, $\overline{(j,l)}$ and $\overline{(l,i)}$ which do not cross $\overline{(k,l)}$ lie in $\X \cup \mathcal{E}(\p_{2n})$. Furthermore, the diameters $\overline{(k,k+n)}$ and $\overline{(l,l+n)}$ of the same colour as $\overline{(i,j)}$ also lie in $\X$.
}
\end{itemize}
Figure \ref{fig:PT} illustrates the axioms for a Ptolemy diagram.
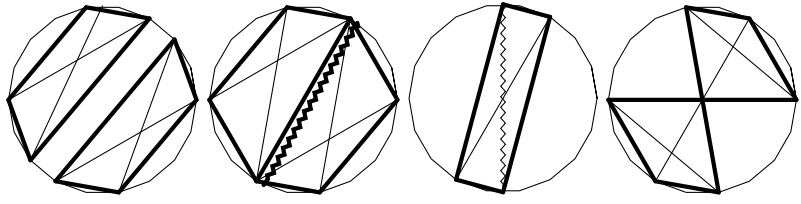
\begin{figure}
\centering{
\begin{tikzpicture}[scale=2.5,cap=round,>=latex, font = \footnotesize, font=\sansmath\sffamily
]
	
	\draw[ultra thick] (60:0.5) -- (220:0.5);
	\draw[ultra thick] (60:0.5) -- (100:0.5);
	\draw[ultra thick] (100:0.5) -- (180:0.5);
	\draw[ultra thick] (180:0.5) -- (220:0.5);

	\draw (60:0.5cm) -- (180:0.5);
	\draw (220:0.5)--(90:0.5cm);

	\draw[black, ultra thick] (240:0.5) -- (40:0.5);
	\draw[black, ultra thick] (240:0.5) -- (280:0.5);
	\draw[black, ultra thick] (280:0.5) -- (0:0.5);
	\draw[black, ultra thick] (0:0.5) -- (40:0.5);

	\draw[black] (240:0.5cm) -- (0:0.5);
	\draw[black] (40:0.5)--(280:0.5cm);

        	\draw (0:0.5) -- (20:0.5) -- (40:0.5) -- (60:0.5) -- (80:0.5) -- (100:0.5) -- (120:0.5)  -- (140:0.5) -- (160:0.5) -- (180:0.5) -- (200:0.5) -- (220:0.5) -- (240:0.5) -- (260:0.5) -- (280:0.5) -- (300:0.5) -- (320:0.5) -- (340:0.5) -- (360:0.5) -- (380:0.5);
  \end{tikzpicture}
\begin{tikzpicture}[scale=2.5,cap=round,>=latex, font=\sansmath\sffamily
]
       
	\draw[decorate, decoration={
    zigzag,
    segment length=4,
    amplitude=.9,post=lineto,
    post length=2pt
}, ultra thick] (55:0.5) -- (245:0.5);
	\draw[ultra thick] (60:0.5) -- (240:0.5);

	\draw[ultra thick] (60:0.5) -- (100:0.5);
	\draw[ultra thick] (100:0.5) -- (180:0.5);
	\draw[ultra thick] (180:0.5) -- (240:0.5);

	\draw (60:0.5cm) -- (180:0.5);
	\draw (240:0.5)--(100:0.5cm);

	\draw[black, ultra thick] (240:0.5) -- (280:0.5);
	\draw[black, ultra thick] (280:0.5) -- (0:0.5);
	\draw[black, ultra thick] (0:0.5) -- (60:0.5);

	\draw[black] (240:0.5cm) -- (0:0.5);
	\draw[black] (60:0.5)--(280:0.5cm);

        	\draw (0:0.5) -- (20:0.5) -- (40:0.5) -- (60:0.5) -- (80:0.5) -- (100:0.5) -- (120:0.5)  -- (140:0.5) -- (160:0.5) -- (180:0.5) -- (200:0.5) -- (220:0.5) -- (240:0.5) -- (260:0.5) -- (280:0.5) -- (300:0.5) -- (320:0.5) -- (340:0.5) -- (360:0.5) -- (380:0.5);
  \end{tikzpicture}
\begin{tikzpicture}[scale=2.5,cap=round,>=latex, font=\sansmath\sffamily
]
        
	\draw[ultra thick] (60:0.5) -- (90:0.5);
	\draw[ultra thick] (90:0.5) -- (240:0.5);
	\draw[ultra thick] (240:0.5) -- (270:0.5);
	\draw[ultra thick] (270:0.5) -- (60:0.5);

	\draw (60:0.5cm) -- (240:0.5);
	\draw[decorate, decoration={
    zigzag,
    segment length=4,
    amplitude=.9,post=lineto,
    post length=2pt
}] (90:0.5)--(270:0.5cm);

        	\draw (0:0.5) -- (20:0.5) -- (40:0.5) -- (60:0.5) -- (80:0.5) -- (100:0.5) -- (120:0.5)  -- (140:0.5) -- (160:0.5) -- (180:0.5) -- (200:0.5) -- (220:0.5) -- (240:0.5) -- (260:0.5) -- (280:0.5) -- (300:0.5) -- (320:0.5) -- (340:0.5) -- (360:0.5) -- (380:0.5);
  \end{tikzpicture}
\begin{tikzpicture}[scale=2.5,cap=round,>=latex, font=\sansmath\sffamily
]
	
	\draw (100:0.5) -- (0:0.5);
	\draw (60:0.5) -- (240:0.5);
	\draw[black] (280:0.5) -- (180:0.5);

	\draw[ultra thick] (60:0.5) -- (100:0.5);
	\draw[ultra thick] (0:0.5) -- (60:0.5);
	\draw[black, ultra thick] (240:0.5) -- (280:0.5);
	\draw[black, ultra thick] (180:0.5) -- (240:0.5);

	\draw[ultra thick] (100:0.5) -- (280:0.5);
	\draw[ultra thick] (0:0.5) -- (180:0.5);

        	\draw (0:0.5) -- (20:0.5) -- (40:0.5) -- (60:0.5) -- (80:0.5) -- (100:0.5) -- (120:0.5)  -- (140:0.5) -- (160:0.5) -- (180:0.5) -- (200:0.5) -- (220:0.5) -- (240:0.5) -- (260:0.5) -- (280:0.5) -- (300:0.5) -- (320:0.5) -- (340:0.5) -- (360:0.5) -- (380:0.5);
  \end{tikzpicture}
}
\caption{The axioms for a Ptolemy diagram of Dynkin type $D_n$ illustrated: Whenever two elements, drawn with thin lines, of a Ptolemy diagram $\X$ of Dynkin type $D_n$ cross, then the thick lines must be contained in $\X \cup \mathcal{E}(\p_{2n})$}
  \label{fig:PT}
\end{figure}

\end{definition}

\begin{theorem}[\cite{HJR}, Theorem 1.1] \label{classification of torsion pairs}
Let $X$ be a subcategory of $\CC_{D_n}$ and $\X$ be the corresponding diagram of Dynkin type $D_n$. Then the following are equivalent.
\begin{itemize}
\item{The pair $(X,X^{\perp})$ is a torsion pair in $\CC_{D_n}$.}
\item{The diagram $\X$ is a Ptolemy diagram of Dynkin type $D_n$.}
\end{itemize}
\end{theorem}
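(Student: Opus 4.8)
The plan is to reduce the theorem to a purely combinatorial equivalence and then to prove that equivalence by exploiting that $\nc$ is an inclusion-reversing operation whose square is a closure operator. Since $\CC_{D_n}$ has only finitely many indecomposable objects, every subcategory is contravariantly finite, so by Proposition~2.3 of \cite{IY} the pair $(X,X^{\perp})$ is a torsion pair precisely when ${}^{\perp}(X^{\perp}) = X$; by Proposition~3.5 of \cite{HJR}, whose proof runs through the combinatorial model and in particular Lemma~\ref{intersection number} identifying crossings with the dimension of $\Ext^1$, this is in turn equivalent to $\X = \nc\nc\X$. It therefore suffices to show, for a diagram $\X$ of Dynkin type $D_n$, that
\[
\X = \nc\nc\X \quad \Longleftrightarrow \quad \X \text{ is a Ptolemy diagram of Dynkin type } D_n .
\]
First I would record the formal facts that $\nc$ reverses inclusions and that $\X \subseteq \nc\nc\X$ always holds, because any element of $\X$ crosses nothing in $\nc\X$ by symmetry of crossing. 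Consequently $\nc\nc\nc\X = \nc\X$, so every diagram of the shape $\nc\Y$ is automatically $\nc\nc$-closed; the substance is to match these closed diagrams with the Ptolemy ones.

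The geometric engine for the first implication is a crossing lemma: if $\overline{(i,j)}$ and $\overline{(k,l)}$ cross, so that the four vertices occur on $\p_{2n}$ in the cyclic order $i,k,j,l$, then any arc meeting one of the four \emph{sides} $(i,k),(k,j),(j,l),(l,i)$ of the resulting quadrilateral must also meet one of the two diagonals $(i,j),(k,l)$. This is an elementary case analysis on the position of the second endpoint of the crossing arc. Granting it, I would show that $\nc\Y$ is a Ptolemy diagram for \emph{every} $\Y$: if $\alpha,\beta \in \nc\Y$ cross, then a side $\gamma$ cannot cross any $y \in \Y$, for such a $y$ would by the lemma cross $\alpha$ or $\beta$, contradicting $\alpha,\beta \in \nc\Y$; hence $\gamma \in \nc\Y \cup \mathcal{E}(\p_{2n})$. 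The diameter clauses Pt2 and Pt3, together with the demand in Pt1 and Pt3 that \emph{both} coloured copies of a diameter side belong to $\X$, follow from analogous crossing statements applied to the underlying uncoloured arcs, using that diagrams are invariant under rotation by $\pi$ and the colour rules (equal colours never cross, while opposite colours and a diameter--arc pair cross once). Applying this with $\Y = \nc\X$ shows that any $\nc\nc$-closed $\X$ is Ptolemy.

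For the converse, that a Ptolemy diagram $\X$ satisfies $\nc\nc\X \subseteq \X$, I would argue by contraposition. Given an internal arc or coloured diameter $\gamma \notin \X$, the task is to produce a witness $\delta \in \nc\X$ crossing $\gamma$, which exhibits $\gamma \notin \nc\nc\X$. I would construct $\delta$ by sliding the two endpoints of $\gamma$ inward along the boundary of $\p_{2n}$ to the first vertices at which the resulting arc crosses no element of $\X$; the Ptolemy axioms are exactly what guarantees that each such shrunk arc lands in $\nc\X$, and a careful choice of which endpoint to move keeps it crossing $\gamma$. When $\gamma$ or $\delta$ is a diameter one must additionally pick the colour so that the witness genuinely crosses the obstructing elements of $\X$.

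The hard part is this last, constructive direction, and more precisely the bookkeeping forced by the coloured diameters coming from the exceptional vertices of $D_n$. Because the crossing relation among diameters is colour-sensitive --- two diameters of the same colour never cross, whereas opposite colours cross once and a diameter crosses a pair of arcs once --- the witness $\delta$ must be produced with the correct colour, and both the verification $\delta \in \nc\X$ and the claim that $\delta$ crosses $\gamma$ split into cases according to whether $\gamma$, $\delta$, and the obstructing elements of $\X$ are pairs of arcs or diameters. This is precisely the extra difficulty, absent in types $A_n$ and $A_\infty$, introduced by the exceptional vertices of the Dynkin diagram $D_n$.
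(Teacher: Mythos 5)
Your opening reduction is precisely the route this paper itself records in Section \ref{Torsion pairs}: since $\CC_{D_n}$ has only finitely many indecomposable objects, every subcategory is contravariantly finite, so Proposition 2.3 of \cite{IY} together with Proposition 3.5 of \cite{HJR} converts the torsion-pair condition into $\X = \nc\nc\X$. Be aware, however, that the paper does not prove the present theorem at all: it is imported verbatim as Theorem 1.1 of \cite{HJR}, and the surrounding text only states the reduction you reproduce. So everything after your first paragraph is an attempt to reconstruct the proof of the cited classification, and it is there that the proposal breaks down. Your first direction --- that $\nc\Y$ satisfies the Ptolemy axioms for every diagram $\Y$, via the quadrilateral crossing lemma --- is the standard argument and is sound in outline, although the colour clauses of Pt2 and Pt3 (both coloured copies of a diameter side, and the diameters $\overline{(k,k+n)}$, $\overline{(l,l+n)}$ of matching colour) require the case analysis you only gesture at.

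The genuine gap is the witness construction in the converse direction. An arc $\delta$ crosses $\gamma = (a,b)$ under Definition \ref{crossing arcs} only if the four endpoints are pairwise distinct and alternate around $\p_{2n}$; equivalently, $\delta$ must have one endpoint strictly inside each of the two boundary intervals cut out by $\gamma$. Your construction can never produce such a $\delta$: sliding \emph{both} endpoints of $\gamma$ inward yields an arc nested on one side of $\gamma$, whose endpoints do not alternate with those of $\gamma$, while sliding only \emph{one} endpoint yields an arc sharing a vertex with $\gamma$, which again does not cross it since the four vertices are not pairwise distinct. So no shrinking of $\gamma$ can exhibit $\gamma \notin \nc\nc\X$. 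Compounding this, the justification offered --- ``the Ptolemy axioms are exactly what guarantees that each such shrunk arc lands in $\nc\X$'' --- is not an executable step: Pt1--Pt3 constrain only \emph{crossing pairs of elements of} $\X$, and the shrunk arc lies outside $\X$ by construction, so the axioms say nothing directly about it. The argument in \cite{HJR} works by a different mechanism: assuming $\gamma \in \nc\nc\X \setminus \X$, one first forces the existence of elements of $\X$ anchored at the endpoints of $\gamma$ (by testing suitable short arcs against membership in $\nc\X$), then makes extremal choices among these and applies the Ptolemy axioms to the resulting crossing pairs \emph{inside} $\X$ to conclude $\gamma \in \X$, with a separate colour analysis when $\gamma$ or the anchored elements are diameters. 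Without this mechanism, or a substitute for it, your proposal establishes only the easy inclusion $\X \subseteq \nc\nc\X$ and one implication of the combinatorial equivalence, hence only one direction of the theorem.
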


\begin{remark}
Every cluster tilting subcategory $X$ in a triangulated category $T$ gives rise to a torsion pair $(X,X^\perp)$ in $T$. In $\CC_{D_n}$, cluster tilting subcategories are just maximal rigid subcategories and as a consequence of Lemma \ref{intersection number} correspond to diagrams of Dynkin type $D_n$ that are maximal collections of non-crossing elements of $\A(\p_{2n})$. These diagrams trivially satisfy the Ptolemy condition, as they consist of pairwise non-crossing pairs of arcs and diameters.
\end{remark}

\section{Mutation of torsion pairs in triangulated categories}
\label{Mutation of torsion pairs}

Mutation of torsion pairs in triangulated categories has been defined by Iyama and Yoshino in \cite{IY} and in cluster categories provides a generalisation of mutation of cluster tilting subcategories. Let $T$ be a triangulated category with shift functor $\Sigma$.

\begin{definition}[mutation in triangulated categories, \cite{IY}] \label{D:mutation}
Fix a rigid subcategory $D$ of $T$, i.e.\;a subcategory $D$ of $T$ such that for all objects $d$ and $d'$ in $D$ it holds $\Ext^1_T(d,d') = 0$. For a subcategory $M \subset T$, the mutations of $M$ with respect to $D$ are the subcategories 
\begin{itemize}
\item{$\mu^-_D(M)$ of objects $t \in \leftidx{^\perp}{(\Sigma D)}$ such that there exists a distinguished triangle 
\begin{eqnarray*}\label{left}
\xymatrix{m \ar[r] & d \ar[r] & t \ar[r] & \Sigma m}
\end{eqnarray*}
with $m \in M$ and $d \in D$. 
}
\item{$\mu_D(M)$ of objects $t \in (\Sigma^{-1}D)^\perp$ such that there exists a distinguished triangle 
\begin{eqnarray*}\label{right}
\xymatrix{t \ar[r] & d \ar[r] & m \ar[r] & \Sigma t}
\end{eqnarray*}
with $m \in M$ and $d \in D$. 
}
\end{itemize}

A pair $(M,N)$ of subcategories $M,N \subset T$ is called a {\em $D$-mutation pair} if
$$
D \subset N \subset \mu^-_D(M)
\quad \text{ and } \quad
D \subset M \subset \mu_D(N).
$$
\end{definition}

\begin{lemma}\label{invariant}
For any rigid subcategory $D$ of $T$ we have $\mu_D(D) = \mu^-_D(D) = D$.
\end{lemma}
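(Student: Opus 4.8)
The plan is to prove both equalities by checking the two inclusions in each case, and to deduce $\mu_D(D) = D$ from $\mu^-_D(D) = D$ by a dual argument. The guiding idea is that the defining orthogonality conditions (membership in ${}^\perp(\Sigma D)$, respectively $(\Sigma^{-1}D)^\perp$) are precisely strong enough to force the connecting morphism of the defining triangle to vanish, so that the triangle splits and exhibits the object as a direct summand of an object of $D$.

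First I would prove $D \subseteq \mu^-_D(D)$. Given $d' \in D$, rigidity of $D$ yields $\Hom(d', \Sigma d) = \Ext^1_T(d', d) = 0$ for every $d \in D$, so $d' \in {}^\perp(\Sigma D)$. The triangle required by Definition \ref{D:mutation} can then be taken to be the split triangle
\[
0 \to d' \xrightarrow{\ \mathrm{id}\ } d' \to \Sigma 0,
\]
whose outer terms $m = 0$ and $d = d'$ both lie in $D$, since the zero object, being the empty direct sum, belongs to every subcategory. Hence $d' \in \mu^-_D(D)$.

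For the reverse inclusion, take $t \in \mu^-_D(D)$, so that $t \in {}^\perp(\Sigma D)$ and there is a triangle $m \to d \xrightarrow{g} t \xrightarrow{h} \Sigma m$ with $m, d \in D$. Because $\Sigma m \in \Sigma D$ and $t \in {}^\perp(\Sigma D)$, the connecting morphism $h \in \Hom(t, \Sigma m)$ must be zero; a triangle with vanishing connecting map splits, so $d \cong m \oplus t$, and since $D$ is closed under direct summands we get $t \in D$. This establishes $\mu^-_D(D) = D$. The equality $\mu_D(D) = D$ follows by the mirror argument: for $D \subseteq \mu_D(D)$ one verifies $d' \in (\Sigma^{-1}D)^\perp$ via $\Hom(\Sigma^{-1}d, d') = \Ext^1_T(d, d') = 0$ and uses the split triangle $d' \xrightarrow{\mathrm{id}} d' \to 0 \to \Sigma d'$, while for the reverse inclusion a triangle $t \to d \to m \xrightarrow{h} \Sigma t$ with $t \in (\Sigma^{-1}D)^\perp$ has $\Sigma^{-1}h \in \Hom(\Sigma^{-1}m, t) = 0$, forcing $h = 0$ and $d \cong t \oplus m$, whence $t \in D$.

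The argument is essentially routine, so the only delicate point — the step I would be most careful about — is the bookkeeping that identifies the connecting morphism of each defining triangle with an element of the precise Hom-group annihilated by the relevant orthogonality condition, together with the consistent use of the convention $\Ext^1_T(-,-) = \Hom(-, \Sigma-)$ and the observation that $0 \in D$. Once that identification is made, vanishing of the connecting map and the ensuing splitting deliver the nontrivial inclusion at once.
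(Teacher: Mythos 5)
Your proof is correct and follows essentially the same route as the paper: both directions are handled by rigidity plus the split triangle for $D \subseteq \mu^-_D(D)$, and by observing that the orthogonality condition kills the connecting morphism so the defining triangle splits and $t$ is a summand of $d \in D$. The only differences are presentational --- you invoke the standard fact that a triangle with zero connecting map splits, where the paper verifies this by hand via a direct sum of triangles, and you write out the dual case $\mu_D(D)=D$ explicitly where the paper simply says ``dually.''
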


\begin{proof}
Let $d$ be any object in $D$ and let
\begin{eqnarray*}\label{triangle1}
\xymatrix{d \ar[r] & d' \ar[r] & t \ar[r]^-0 & \Sigma d},
\end{eqnarray*}
be a distinguished triangle with $t \in$ $\leftidx{^\perp}{(\Sigma D)}$. The direct sum of the distinguished triangles
$$
\xymatrix{d \ar@{=}[r] & d \ar[r] & 0 \ar[r] & \Sigma d}
$$
and
$$
\xymatrix{0 \ar[r]& t \ar@{=}[r]& t \ar[r] & 0}
$$
is again a distinguished triangle and we have an isomorphism of triangles
$$
\xymatrix{d \ar[r]^{\begin{pmatrix}1&0\end{pmatrix}} \ar@{=}[d]& d \oplus t \ar[d]^{\cong} \ar[r]^{\begin{pmatrix}0\\1\end{pmatrix}} & t \ar@{=}[d] \ar[r]^-0 & \Sigma d \ar@{=}[d]\\
d \ar[r]^-f & d' \ar[r] & t \ar[r]^-0 & \Sigma d}
$$
We get $d' \cong d \oplus t$ and because every subcategory is assumed to be closed under direct summands, we have $t \in D$. Thus we have $\mu^-_D(D) \subset D$. On the other hand for any object $\tilde{d}$ in $D$, one can complete the $D$-approximation $0 \to \tilde{d}$ of $0$ to the distinguished triangle $\xymatrix{0 \ar[r] & \tilde{d} \ar@{=}[r] & \tilde{d} \ar[r] & 0}$ and thus we have equality; $\mu^-_D(D) = D$. Dually one shows that $\mu_D(D) = D$.
\end{proof}

\begin{remark}
By Proposition 2.6 in Iyama and Yoshino's paper \cite{IY}, for any $D$-mutation pair $(M,N)$ in $T$ we have $M = \mu_D(N)$ and $N = \mu^-_D(M)$. This means that the mutations $\mu_D$ and $\mu_D^-$ are mutually inverse, i.e.\;$\mu_{D}(\mu^-_{D}(M))=M$ and $\mu^-_{D}(\mu_{D}(N))=N$.
\end{remark}

In triangulated categories with Auslander Reiten triangles, mutation of a torsion pairs has been defined by Zhou and Zhu in \cite{ZZ}. Assume now that the triangulated category $T$ has Auslander Reiten triangles, and let $\tau$ be the Auslander Reiten translation.

\begin{theorem}[\cite{ZZ}]\label{Zhou-Zhu}
Let $(X,X^\perp)$ be a torsion pair in $T$, and let $D \subset X \cap (\Sigma^{-1} X)^\perp$ be a functorially finite rigid subcategory satisfying $\tau D = \Sigma D$. Then the pairs of subcategories $\mu_D(X,X^\perp):= (\mu_D(X), \mu_{\Sigma D}(X^\perp))$ and $\mu^-_D(X,X^\perp) := (\mu^-_D(X), \mu^-_{\Sigma D}(X^\perp))$ are torsion pairs in $T$.
\end{theorem}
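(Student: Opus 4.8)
The plan is to prove the assertion for $\mu_D$, i.e.\ that $(\mu_D(X),\mu_{\Sigma D}(X^{\perp}))$ is a torsion pair; the statement for $\mu^-_D$ then follows by applying the $\mu$-case in the opposite category $T^{\mathrm{op}}$ (again triangulated, with shift $\Sigma^{-1}$ and Auslander--Reiten translation $\tau^{-1}$) to the torsion pair $(X^{\perp},X)$ and the rigid subcategory $\Sigma D$, the hypotheses surviving because $\tau^{-1}\Sigma D=D=\Sigma^{-1}\Sigma D$ and $\mu^-_{D}$ in $T$ reads off as the $\mu$-mutation in $T^{\mathrm{op}}$. So I fix $(X,X^{\perp})$ and $D$ as in the statement and write $\mathbb{S}=\tau\Sigma$ for the Serre functor, which exists since $T$ has Auslander--Reiten triangles, with Serre duality $\Hom(A,B)\cong\Hom(B,\mathbb{S}A)^{\vee}$, where $(-)^{\vee}$ is the $k$-dual.

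The first step is to turn the hypotheses into a symmetric pair of containments. The inclusion $D\subset X\cap(\Sigma^{-1}X)^{\perp}$ says exactly that $D\subset X$ and $\Ext^1(X,D)=0$, the latter being equivalent to $\Sigma D\subset X^{\perp}$. The periodicity $\tau D=\Sigma D$ gives $\mathbb{S}D=\tau\Sigma D=\Sigma\tau D=\Sigma^2 D$, and feeding this into Serre duality produces the counterpart vanishings: for $d\in D$ and $x\in X$,
\[ \Ext^1(d,x)=\Hom(d,\Sigma x)\cong\Hom(\Sigma x,\mathbb{S}d)^{\vee}=\Hom(\Sigma x,\Sigma^2 d)^{\vee}=\Hom(x,\Sigma d)^{\vee}=\Ext^1(x,d)^{\vee}=0, \]
and similarly $\Hom(y,\Sigma^2 d)\cong\Hom(d,y)^{\vee}=0$ for $y\in X^{\perp}$. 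Together with $\Hom(D,X^{\perp})=0$ these yield
\[ D\subset X\subset\mathcal{Z}_D:={}^{\perp}(\Sigma D)\cap(\Sigma^{-1}D)^{\perp},\qquad \Sigma D\subset X^{\perp}\subset\mathcal{Z}_{\Sigma D}:={}^{\perp}(\Sigma^2 D)\cap D^{\perp}=\Sigma\mathcal{Z}_D. \]

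The second step invokes the mutation theory of Iyama and Yoshino. Since $D$ is functorially finite and rigid with $D\subset X\subset\mathcal{Z}_D$, the subcategory $\mu_D(X)$ is well defined, lies in $\mathcal{Z}_D$, is again functorially finite, and $(\mu_D(X),X)$ is a $D$-mutation pair, so $\mu^-_D(\mu_D(X))=X$; as $\Sigma$ is an autoequivalence the same applies to $\mu_{\Sigma D}(X^{\perp})\subset\mathcal{Z}_{\Sigma D}$. The third step is axiom (T1). Take $a\in\mu_D(X)$ and $b\in\mu_{\Sigma D}(X^{\perp})$ with defining triangles $a\to d\to x\to\Sigma a$ ($x\in X$, $d\in D$) and $b\to\Sigma d'\to y\to\Sigma b$ ($y\in X^{\perp}$, $d'\in D$). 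Applying $\Hom(-,b)$ to the first triangle squeezes $\Hom(a,b)$ between $\Hom(d,b)$, which vanishes since $b\in D^{\perp}$, and $\Hom(\Sigma^{-1}x,b)=\Hom(x,\Sigma b)$; the latter is analysed by applying $\Hom(x,-)$ to the triangle for $b$, where $\Hom(x,y)=0$ and the only residual contribution is a higher-extension pairing $\Hom(x,\Sigma^2 d')\cong\Hom(d',x)^{\vee}$ between $X$ and $D$. The map $\Hom(a,b)\hookrightarrow\Hom(x,\Sigma b)$ must then be shown to be zero; this is the verification that the two defining triangles are compatible and is the first place where the full strength of the Serre-symmetry $\mathbb{S}D=\Sigma^2 D$ (equivalently $\tau D=\Sigma D$) is used. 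Chasing the two long exact sequences gives $\Hom(a,b)=0$.

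The final and hardest step is axiom (T2): every $t\in T$ must fit into a triangle $a\to t\to b\to\Sigma a$ with $a\in\mu_D(X)$ and $b\in\mu_{\Sigma D}(X^{\perp})$. I would begin from the torsion triangle $x_t\to t\to y_t\to\Sigma x_t$ of $(X,X^{\perp})$ and mutate its ends using functorial finiteness of $D$: a right $D$-approximation $d\to x_t$ completes to $a\to d\to x_t\to\Sigma a$ with $a=\mu_D(x_t)\in\mu_D(X)$, and dually $b\to\Sigma d'\to y_t\to\Sigma b$ with $b=\mu_{\Sigma D}(y_t)\in\mu_{\Sigma D}(X^{\perp})$. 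Then I would splice the three triangles by repeated use of the octahedral axiom so that $t$ reappears in the middle with $a$ and $b$ on the outside. The main obstacle is precisely this gluing: one must arrange the octahedra so that all intermediate connecting objects land in $D$ (and hence become invisible in the subfactor $\mathcal{Z}_D/[D]$, where $\mu_D$ is realised by a power of the shift), and so that the outer terms are exactly $a$ and $b$ rather than extensions of them. Once (T1) and (T2) hold, $(\mu_D(X),\mu_{\Sigma D}(X^{\perp}))$ is a torsion pair, which is the claim.
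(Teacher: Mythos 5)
The paper itself offers no proof of this statement---it is quoted verbatim from Zhou--Zhu \cite{ZZ}---so your proposal must stand on its own, and as written it has a genuine gap in each of the two torsion-pair axioms. For (T1), your chase stalls exactly where you say it does, and the closing assertion that ``chasing the two long exact sequences gives $\Hom(a,b)=0$'' cannot be completed along your route: you embed $\Hom(a,b)$ into $\Hom(x,\Sigma b)$ and then into $\Hom(x,\Sigma^2 d')\cong\Hom(d'',x)^{\vee}$ (Serre duality, $d''\in D$), and this last space is typically \emph{nonzero}, because $D\subset X$ already forces $\Hom(D,X)\neq 0$ via identity maps; no further exactness can finish from there. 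The vanishing is provable, but by chasing in the opposite order: given $f\colon a\to b$, rotate the triangle for $b$ to $\Sigma^{-1}y\to b\to \Sigma d'\to y$ and apply $\Hom(a,-)$; since $a\in{}^{\perp}(\Sigma D)$ (your $\mathcal{Z}_D$, using $\mathbb{S}D=\Sigma^2 D$) we get $\Hom(a,\Sigma d')=0$, so $f$ lifts to $h\colon a\to\Sigma^{-1}y$. Next apply $\Hom(-,\Sigma^{-1}y)$ to $\Sigma^{-1}x\to a\to d\to x$; since $\Hom(\Sigma^{-1}x,\Sigma^{-1}y)=\Hom(x,y)=0$, the map $h$ factors through $d\in D$. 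Finally $f$ factors through $\Hom(d,b)=0$ because $b\in(\Sigma^{-1}\Sigma D)^{\perp}=D^{\perp}$, whence $f=0$. So (T1) is fixable, but the fix is a different chase, not the one you set up.

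The gap in (T2) is the more serious one, and you flag it yourself: the octahedral splicing is ``the main obstacle'' and is never carried out, yet it is the actual content of the theorem. Naively gluing the three triangles $a\to d\to x_t\to\Sigma a$, $x_t\to t\to y_t\to\Sigma x_t$ and $b\to\Sigma d'\to y_t\to\Sigma b$ does not directly produce $a\to t\to b\to\Sigma a$: the connecting morphisms need not compose compatibly, and the octahedra produce outer terms that are a priori only extensions of $a$ and $b$ by objects which one must separately show lie in $D$. The standard way to control this---and, in substance, the route of Zhou and Zhu---is to pass to the subfactor triangulated category $\mathcal{Z}_D/[D]$ of Iyama--Yoshino, where $\mu_D$ is realised as the inverse shift, and to show that the image of $(X,X^{\perp})$ there is a torsion pair whose mutation lifts back to $T$; your proposal gestures at this (``invisible in the subfactor $\mathcal{Z}_D/[D]$'') without supplying any of the needed verifications, including the identification $(\mu_D(X))^{\perp}=\mu_{\Sigma D}(X^{\perp})$ and the contravariant finiteness of $\mu_D(X)$. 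Your steps 1 and 2 (reduction of $\mu^-_D$ to $\mu_D$ via the opposite category, and the Serre-duality bookkeeping giving $D\subset X\subset\mathcal{Z}_D$ and $\Sigma D\subset X^{\perp}\subset\Sigma\mathcal{Z}_D$) are correct and are the right set-up, but with (T1) misdirected and (T2) only announced, what you have is a plan rather than a proof.
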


Mutation of a torsion pair $(X,X^\perp)$ is thus defined with respect to a subcategory $D$ of $X$, such that there are no extensions from $X$ to $T$ and that additionally satisfies the $2$-Calabi-Yau condition $\tau D = \Sigma D$. The latter is automatic for any subcategory of a $2$-Calabi-Yau category, hence we do not have to worry about it when working in the cluster category $\CC_{D_n}$ of Dynkin type $D_n$.

\section{Non-crossing diagrams of Dynkin type $D_n$ and mutation} 
\label{Non-crossing diagrams}

Zhou and Zhu introduced $\mathcal{D}$-cells for non-crossing subdiagrams $\mathcal{D}$ of Ptolemy diagrams of Dynkin type $A_n$ and $A_{\infty}$ in \cite{ZZ}. In this section, we define the analogue for Dynkin type $D_n$, which will prove essential for the definition of mutation of Ptolemy diagrams in Section \ref{A combinatorial model for mutation}. Let from now on $\D$ denote a non-crossing diagram of Dynkin type $D_n$, i.e.\;a diagram of Dynkin type $D_n$ with pairwise non-crossing elements. Informally speaking, $\D$-cells of Dynkin type $D_n$ are convex polygons with edges in $\D \cup \mathcal{E}(\p_{2n})$ which do not contain any diagonals in $\D$. However, the presence of diameters means that we have to be careful with the definition. It is useful to replace some of the diameters in $\A(\p_{2n})$ by pairs of radii. 

\begin{definition}\label{radii}
We introduce an additional central vertex $c$, which is placed at the centre of $\p_{2n}$ and additional arcs $(x,c)$ for $x \in \{0, \ldots, 2n-1\}$, which we call {\em radii}. The $\pi$-rotation $(x+n,c)$ of a radius $(x,c)$ is again a radius and together they form a pair of radii. For each pair of radii $\{(x,c),(x+n,c)\}$ we introduce a copy of colour red and a copy of colour green and denote it by $\overline{(x,c)}_r$, respectively $\overline{(x,c)}_g$. 

Let $\D$ be a non-crossing diagram. We define the replacement map $r_{\D}$ as follows. If $\D$ has no diameters, then we set $r_{\D}: \A(\p_{2n}) \cup \mathcal{E}(\p_{2n}) \to
\A(\p_{2n}) \cup \mathcal{E}(\p_{2n})$ to be the identity. Otherwise we define
\begin{eqnarray*}
r_{\D}: \A(\p_{2n}) \cup \mathcal{E}(\p_{2n}) &\to&
\A(\p_{2n}) \cup \mathcal{E}(\p_{2n})\cup \{\overline{(x,c)}| x \in \{0, \ldots, 2n-1\} \}
\end{eqnarray*}
by
\begin{eqnarray*}
\overline{(x,y)} &\mapsto& \overline{(x,y)} \text{ for all pair of arcs }\overline{(x,y)} \in \A(\p_{2n}) \cup \mathcal{E}(\p_{2n})\\
\overline{(x,x+n)}_{r,g}&\mapsto& \begin{cases}
\overline{(x,x+n)}_{r,g} \text{ if the differently coloured diameter } \overline{(x,x+n)}_{g,r} \text{ lies in } \D \\
\overline{(x,c)}_{r,g} \text{ otherwise.} 
\end{cases}
\end{eqnarray*}
\end{definition}

Figure \ref{fig:r_D} illustrates examples for the map $r_{\D}$ for a diagram of diameters with non-crossing subdiagram $\D$.

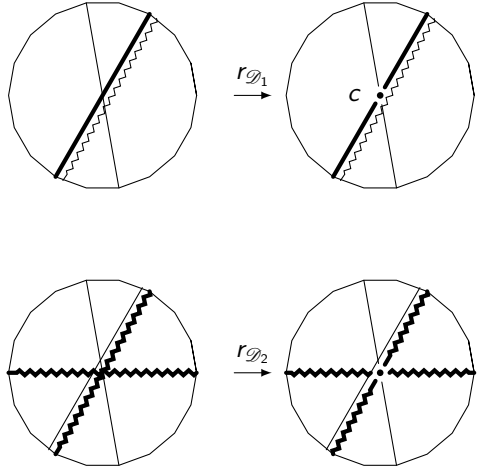
\begin{figure}
\centering{
\begin{tikzpicture}[scale=2.5,cap=round,>=latex, font = \footnotesize, font=\sansmath\sffamily
]

	\draw[decorate, decoration={
    zigzag,
    segment length=4,
    amplitude=.9,post=lineto,
    post length=2pt
}] (55:0.5) -- (245:0.5);

	\draw[ultra thick] (60:0.5cm) -- (240:0.5);
	\draw (100:0.5) -- (280:0.5);
	
        	\draw (0:0.5) -- (20:0.5) -- (40:0.5) -- (60:0.5) -- (80:0.5) -- (100:0.5) -- (120:0.5)  -- (140:0.5) -- (160:0.5) -- (180:0.5) -- (200:0.5) -- (220:0.5) -- (240:0.5) -- (260:0.5) -- (280:0.5) -- (300:0.5) -- (320:0.5) -- (340:0.5) -- (360:0.5) -- (380:0.5);
        	
        	\draw[->] (0.7,0) -- (0.9,0) node[midway, above] {$r_{\D_1}$};
 
\begin{scope}[xshift = 42]
        
           \node[label=left:{$c$}] at (0,0) {};
	\draw (100:0.5) -- (100:0.05);
	\draw (280:0.05) -- (280:0.5);
	
	\draw[decorate, decoration={
    zigzag,
    segment length=4,
    amplitude=.9,post=lineto,
    post length=2pt
}] (55:0.5) -- (245:0.5);
	\fill (0,0) circle (0.5pt);
	
	\draw[ultra thick] (60:0.5cm) -- (60:0.05);
	\draw[ultra thick] (240:0.05)--(240:0.5cm);

        	\draw (0:0.5) -- (20:0.5) -- (40:0.5) -- (60:0.5) -- (80:0.5) -- (100:0.5) -- (120:0.5)  -- (140:0.5) -- (160:0.5) -- (180:0.5) -- (200:0.5) -- (220:0.5) -- (240:0.5) -- (260:0.5) -- (280:0.5) -- (300:0.5) -- (320:0.5) -- (340:0.5) -- (360:0.5) -- (380:0.5);
        	\end{scope}

  \begin{scope}[yshift=-42]
        
        \draw (100:0.5) -- (280:0.5);
	
	\draw[ultra thick, decorate, decoration={
    zigzag,
    segment length=4,
    amplitude=.9,post=lineto,
    post length=2pt
}] (60:0.5) -- (240:0.5);

	\draw (65:0.5) -- (235:0.5);
	\draw[ultra thick, decorate, decoration={
    zigzag,
    segment length=4,
    amplitude=.9,post=lineto,
    post length=2pt
}] (0:0.5cm) -- (180:0.5);

        	\draw (0:0.5) -- (20:0.5) -- (40:0.5) -- (60:0.5) -- (80:0.5) -- (100:0.5) -- (120:0.5)  -- (140:0.5) -- (160:0.5) -- (180:0.5) -- (200:0.5) -- (220:0.5) -- (240:0.5) -- (260:0.5) -- (280:0.5) -- (300:0.5) -- (320:0.5) -- (340:0.5) -- (360:0.5) -- (380:0.5);
        	
        	\draw[->] (0.7,0) -- (0.9,0) node[midway, above] {$r_{\D_2}$};
 \end{scope}
\begin{scope}[yshift=-42, xshift = 42]
        
	\draw (100:0.5) -- (100:0.05);
	\draw (280:0.5) -- (280:0.05);

	\draw[ultra thick, decorate, decoration={
    zigzag,
    segment length=4,
    amplitude=.9,post=lineto,
    post length=2pt
}] (180:0.05) -- (180:0.5);
\draw[ultra thick, decorate, decoration={
    zigzag,
    segment length=4,
    amplitude=.9,post=lineto,
    post length=2pt
}] (0:0.05) -- (0:0.5);
	\draw[ultra thick, decorate, decoration={
    zigzag,
    segment length=4,
    amplitude=.9,post=lineto,
    post length=2pt
}] (60:0.5) -- (60:0.05);
\draw[ultra thick, decorate, decoration={
    zigzag,
    segment length=4,
    amplitude=.9,post=lineto,
    post length=2pt
}] (240:0.5) -- (240:0.05);
\draw (65:0.5) -- (235:0.5);
	\fill (0,0) circle (0.5pt);
	
        	\draw (0:0.5) -- (20:0.5) -- (40:0.5) -- (60:0.5) -- (80:0.5) -- (100:0.5) -- (120:0.5)  -- (140:0.5) -- (160:0.5) -- (180:0.5) -- (200:0.5) -- (220:0.5) -- (240:0.5) -- (260:0.5) -- (280:0.5) -- (300:0.5) -- (320:0.5) -- (340:0.5) -- (360:0.5) -- (380:0.5);
        	\end{scope}
  \end{tikzpicture}
}
  \caption{The diameters lying in the non-crossing subdiagrams $\D_1$ respectively $\D_2$ are marked by thick lines. The picture shows how some diameters in $\A(\p_{2n})$ get replaced by pairs of radii and some stay as diameters under the replacement map $r_{\D_1}$ respectively $r_{\D_2}$}
  \label{fig:r_D}
  \end{figure}

\begin{definition}
A pair of radii $\overline{(x,c)}_{r,g}$ is said to cross a diameter or pair of arcs $\overline{(a,b)}$ if and only if the corresponding diameter $\overline{(x,x+n)}_{r,g}$ crosses $\overline{(a,b)}$. Two pairs of radii $\overline{(x,c)}_{r,g}$ and $\overline{(a,c)}_{g,r}$ are said to cross if and only if the corresponding diameters $\overline{(x,x+n)}_{r,g}$ and $\overline{(a,a+n)}_{g,r}$ cross. Furthermore, two radii $(a,c)$ and $(x,c)$ do not cross for any $a,x \in \{0, \ldots, 2n-1\}$. 
\end{definition}

\begin{lemma}\label{non-crossing}
If $\overline{(a,b)}$ and $\overline{(x,y)}$ lie in $r_{\D}(\D)$ then the arcs $(a,b)$ and $(x,y)$ do not cross.
\end{lemma}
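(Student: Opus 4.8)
The plan is to prove the slightly stronger statement that no two \emph{elements} of $r_{\D}(\D)$ cross at all, from which the assertion about the representative arcs $(a,b)$ and $(x,y)$ is immediate. The guiding idea is that $r_{\D}$ alters neither the colour of a diameter nor, by the crossing conventions for radii introduced just before this lemma, its crossing behaviour: it merely records which diameters are to be broken at the centre $c$. To exploit this I would attach to every $e \in r_{\D}(\D)$ a preimage $\tilde{e} \in \D$, by setting $\tilde{e} = e$ when $e$ is a pair of arcs or a coloured diameter, and $\tilde{e} = \overline{(x,x+n)}_{r,g}$ when $e = \overline{(x,c)}_{r,g}$ is a pair of radii. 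Since $r_{\D}$ fixes every pair of arcs, fixes a diameter exactly when its oppositely coloured copy lies in $\D$, and otherwise sends $\overline{(x,x+n)}_{r,g}$ to $\overline{(x,c)}_{r,g}$, Definition \ref{radii} gives directly that $\tilde{e}$ is the unique element of $\D$ with $r_{\D}(\tilde{e}) = e$; in particular $\tilde{e} \in \D$ for every $e \in r_{\D}(\D)$.

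The heart of the argument is the claim that for $e, e' \in r_{\D}(\D)$, the elements $e$ and $e'$ cross if and only if $\tilde{e}$ and $\tilde{e}'$ cross. When neither $e$ nor $e'$ is a pair of radii there is nothing to prove, as then $\tilde{e} = e$ and $\tilde{e}' = e'$. When at least one of them, say $e = \overline{(x,c)}_{r,g}$, is a pair of radii, I would read the equivalence straight off the crossing conventions for radii: by definition $\overline{(x,c)}_{r,g}$ crosses a diameter or pair of arcs $\overline{(a,b)}$ precisely when the corresponding diameter $\overline{(x,x+n)}_{r,g} = \tilde{e}$ crosses $\overline{(a,b)}$, and two pairs of radii cross precisely when the corresponding diameters cross. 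Running through the remaining subcases (radii against a pair of arcs, radii against a diameter, radii against radii) thus converts, in each instance, a crossing of $e$ with $e'$ into a crossing of $\tilde{e}$ with $\tilde{e}'$, the colours matching because a pair of radii carries the colour of the diameter it came from.

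Granting the claim, the lemma is immediate: $\tilde{e}$ and $\tilde{e}'$ both lie in the non-crossing diagram $\D$, hence do not cross, and therefore $e$ and $e'$ do not cross either; this is exactly the statement that the arcs $(a,b)$ and $(x,y)$ do not cross. I expect the only delicate point to be the bookkeeping in the case distinction above—tracking colours carefully and invoking the convention that two radii sharing the centre $c$ never cross—rather than anything structurally hard. All the substance of the lemma is concentrated in the single observation that passing to preimages $\tilde{e}$ sends $r_{\D}(\D)$ back into $\D$ while the crossing conventions for radii are precisely engineered so that $r_{\D}$ preserves crossings.
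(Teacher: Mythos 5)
Your proposal proves the wrong statement. The claim you establish --- that no two \emph{elements} of $r_{\D}(\D)$ cross --- refers to the combinatorial crossing relation of Definition \ref{crossing arcs} and its extension to radii, and that relation is \emph{weaker}, not stronger, than what Lemma \ref{non-crossing} asserts: by convention two diameters of the same colour do not cross as elements, even though their underlying arcs always cross geometrically at the centre of $\p_{2n}$. So your final inference, that ``$e$ and $e'$ do not cross \ldots this is exactly the statement that the arcs $(a,b)$ and $(x,y)$ do not cross,'' breaks down in precisely the case the lemma exists to handle. Your element-level transfer via preimages $\tilde{e}$ is correct but nearly vacuous: the crossing conventions for radii are indeed engineered so that element-crossing pulls back to $\D$, but the analogous element-level non-crossing already holds for $\D$ itself without any replacement. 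It cannot be the content of the lemma, whose role (see its invocation in the proof of Lemma \ref{divides}) is to guarantee that the sides $(d_j,d_{j+1})$ of a prospective $\D$-cell are pairwise non-crossing \emph{segments}, so that $\langle d_1, \ldots, d_k \rangle$ is an honest polygon --- a geometric statement about arcs, not about coloured elements.

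The missing case, which is the entirety of the paper's short proof, is that of two distinct diameters of the same colour. These may perfectly well coexist in the non-crossing diagram $\D$ (same-coloured diameters do not cross as elements), and their arcs do cross. One must then argue: if $\D$ contains two distinct same-coloured diameters, then neither can have its oppositely coloured copy in $\D$ (that copy would cross the other diameter, contradicting that $\D$ is non-crossing), so by the definition of $r_{\D}$ \emph{both} diameters are replaced by pairs of radii; hence no two elements of $r_{\D}(\D)$ with geometrically crossing arcs survive, since distinct radii meet only at the common endpoint $c$ and so never cross. Your argument uses the replacement mechanism of $r_{\D}$ only to define preimages, never to rule out surviving same-coloured diameters, so the one nontrivial case is silently skipped.
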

\begin{proof}
Assume by contradiction that the arcs $(a,b)$ and $(x,y)$ cross and $\overline{(a,b)}$ and $\overline{(x,y)}$ lie in $r_{\D}(\D)$. Because $\D$ is non-crossing, the elements $\overline{(a,b)}$ and $\overline{(x,y)}$ must be diameters of the same colour in $\D$. However, if $\D$ contains two distinct diameters of the same colour, they get replaced by pairs of radii when applying the replacement map $r_{\D}$ which contradicts the assumption that $\overline{(a,b)}$ and $\overline{(x,y)}$ lie in $r_{\D}(\D)$.
\end{proof}

\begin{definition}
For two arcs $(x,y)$ and $(y,z)$ we denote by $\sphericalangle(x,y,z)$ the angle covered when rotating $(x,y)$ to $(y,z)$ in a clockwise direction. We assume $0 \leq \sphericalangle(x,y,z) < 2 \pi$.
\end{definition}

\begin{lemma}\label{zeroangle}
Let $\overline{(x,y)}, \overline{(y,z)},\overline{(y,z')} \in r_{\D}(\D)$. Then we have $\sphericalangle(x,y,z) = \sphericalangle(x,y,z')$ if and only if $z = z'$. 
\end{lemma}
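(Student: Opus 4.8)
The plan is to establish the nontrivial (``only if'') direction; the ``if'' direction is immediate, since $z = z'$ makes $(y,z)$ and $(y,z')$ literally the same arc, whence $\sphericalangle(x,y,z) = \sphericalangle(x,y,z')$. For the converse, I would first unwind the definition of the angle: $\sphericalangle(x,y,z)$ is the clockwise angle through which the arc $(x,y)$ must be rotated about the common endpoint $y$ to reach $(y,z)$. Consequently $\sphericalangle(x,y,z) = \sphericalangle(x,y,z')$ forces $(y,z)$ and $(y,z')$ to leave $y$ in exactly the same direction, i.e.\ $z$ and $z'$ lie on a common ray emanating from $y$. The whole statement thus reduces to showing that, for a fixed direction out of $y$, at most one endpoint $z$ occurs among arcs $(y,z) \in r_{\D}(\D)$.

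I would then split into cases according to whether the endpoints are ordinary vertices of $\p_{2n}$ or the central vertex $c$. If both $z$ and $z'$ are vertices of $\p_{2n}$, then $y$, $z$ and $z'$ all lie on the circumscribed circle of the regular $2n$-gon; since three pairwise distinct points on a circle are never collinear, $z$ and $z'$ cannot both lie on one ray from $y$ unless $z = z'$. If one endpoint, say $z$, equals $c$, then $(y,z) = (y,c)$ is a radius, whose direction from $y$ is precisely that of the diameter $(y,y+n)$, so the ray through $c$ meets the boundary of $\p_{2n}$ only at $y+n$. Hence any other arc $(y,z') \in r_{\D}(\D)$ in this direction must have $z' \in \{c, y+n\}$.

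The key point, and the place where the definition of the replacement map enters, is to rule out $z' = y+n$ in this last case. By Definition \ref{radii}, a diameter at position $y$ is replaced by the pair of radii $\overline{(y,c)}$ exactly when it does not survive as a (two-coloured) diameter, so $r_{\D}(\D)$ can never contain both the radius $(y,c)$ and the full diameter $(y,y+n)$; and no pair of arcs can contribute $(y,y+n)$, since a diameter is not a pair of arcs. Therefore $z = c$ forces $z' = c = z$, completing the argument. I expect the main obstacle to be precisely this bookkeeping around the centre: one must check carefully that radii and diameters are genuinely mutually exclusive in $r_{\D}(\D)$, and that within a pair of arcs $\overline{(y,z)}$ only the single arc $(y,z)$, and not its partner $(y+n,z+n)$, shares the vertex $y$, so that the reduction to ``one endpoint per direction'' is legitimate.
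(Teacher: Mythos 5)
Your proof is correct and takes essentially the same route as the paper: reduce equal angles to collinearity along a ray from $y$, use the regularity of $\p_{2n}$ (three distinct points on the circumscribed circle cannot be collinear) when $z,z' \in \{0,\ldots,2n-1\}$, and observe that otherwise $z \neq z'$ would force $\{z,z'\} = \{c,\,y+n\}$, which is impossible by the definition of $r_{\D}$. Your explicit bookkeeping showing that $r_{\D}(\D)$ can never contain both a diameter and a pair of radii at the same position $y$ is precisely the step the paper compresses into the phrase ``contradicts the definition of the map $r_{\D}$''.
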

\begin{proof}
Assume that $\sphericalangle(x,y,z) = \sphericalangle(x,y,z')$. If both $z$ and $z'$ lie in $\{0, \ldots, 2n-1\}$ it follows from the regularity of $\p_{2n}$ that $z = z'$. Otherwise, if $c \in \{z,z'\}$ then $z \neq z'$ would imply $\{z,z'\} = \{c,y+n\}$. However, in this case both $\overline{(y,y+n)}_r$ or $\overline{(y,y+n)}_g$ and $\overline{(y,c)}_r$ or $\overline{(y,c)}_g$ are in $r_{\D}(\D)$, which contradicts the definition of the map $r_{\D}$.
\end{proof}

\begin{definition}[$\D$-cell of Dynkin type $D_n$]\label{D-cells}
Let $\D$ be a non-crossing diagram of Dynkin type $D_n$. A $\D${\em -cell of Dynkin type} $D_n$  is a polygon $\langle d_1, \ldots, d_k \rangle$ with consecutive vertices $d_1, \ldots, d_k \in \{0, \ldots, 2n-1\} \cup \{c\}$, $k \geq 3$, such that for $i = 0, \ldots, k-1$ we have
$$
\overline{(d_i,d_{i+1})} \in r_{\D}(\D) \cup \mathcal{E}(\p_{2n}),
$$
where we calculate modulo $k$ in the indices.
Furthermore, for any $\overline{(d_i,v)} \in r_{\D}(\D)$ with $\sphericalangle(d_{i-1}, d_i, v) >0$ we have
$$
0 < \sphericalangle(d_{i-1}, d_i, d_{i+1}) \leq \sphericalangle(d_{i-1}, d_i, v).
$$

We call a $\D$-cell $\langle d_1, \ldots, d_k \rangle$ of Dynkin type $D_n$ together with its $\pi$-rotation $\langle d_1+n, \ldots, d_k+n \rangle$ (where we set $c+n = c$) a {\em pair of $\D$-cells} and denote it by $\overline{\langle d_1, \ldots, d_k\rangle}$. We call a pair of $\D$-cells {\em central}, if they contain the centre $c$ of the polygon $\p_{2n}$.
\end{definition}

\begin{example}

Figure \ref{fig:C1} shows an example of a pair of $\D$-cells which does not contain the centre of $\p_{2n}$. Figures \ref{fig:C2} shows examples of central pairs of $\D$-cells.

\begin{figure}
\centering{
\begin{tikzpicture}[scale=2.5,cap=round,>=latex, font = \footnotesize, font=\sansmath\sffamily
]
        
	\fill[black!20] (0:0.5) -- (20:0.5) -- (80:0.5) -- (120:0.5) -- (0:0.5) -- cycle;
	\fill[black!20] (180:0.5) -- (200:0.5) -- (80+180:0.5) -- (120+180:0.5) -- (180:0.5) -- cycle;
	\node (d1) at (0:0.57cm){$d_1$};
	\node (d2) at (20:0.57cm){$d_2$};
	\node (d3) at (80:0.57cm){$d_3$};
	\node (dk) at (120:0.57cm){$d_k$};
	
	\draw (0:0.5) -- (20:0.5)--(80:0.5);
	\draw[dotted] (80:0.5) -- (120:0.5);
	\draw (120:0.5) -- (0:0.5);
	
	\draw (180:0.5) -- (200:0.5)--(250:0.5);
	\draw[dotted] (260:0.5) -- (300:0.5);
	\draw (300:0.5) -- (180:0.5);

        	\draw (0:0.5) -- (20:0.5) -- (40:0.5) -- (60:0.5) -- (80:0.5) -- (100:0.5) -- (120:0.5)  -- (140:0.5) -- (160:0.5) -- (180:0.5) -- (200:0.5) -- (220:0.5) -- (240:0.5) -- (260:0.5) -- (280:0.5) -- (300:0.5) -- (320:0.5) -- (340:0.5) -- (360:0.5) -- (380:0.5);
\end{tikzpicture}
     \caption{Example for a non-central pair of $\D$-cells} \label{fig:C1}
}
\end{figure}
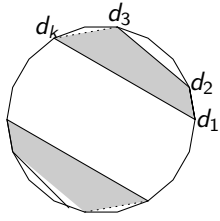

\begin{figure}
\centering
{ 
\begin{tikzpicture}[scale=2.5,cap=round,>=latex, font = \footnotesize, font=\sansmath\sffamily
]
	\fill[black!20] (60:0.5) -- (120:0.5) --(200:0.5) --(240:0.5) -- (300:0.5) -- (380:0.5) -- (60:0.5) -- cycle;
	\node (d1) at (60:0.57cm){$d_1$};
	\node (d2) at (120:0.57cm){$d_2$};
	\node (dk) at (200:0.57cm){$d_{\frac{k}{2}}$};
	\node (dk+1) at (240:0.57cm){$d_{\frac{k}{2}+1}$};
	\node (dk+n) at (380:0.57cm){$d_{k}$};

	\draw (60:0.5) -- (120:0.5);
	\draw[dotted] (120:0.5) -- (200:0.5);
	\draw (200:0.5) -- (240:0.5) -- (300:0.5);
	\draw[dotted] (300:0.5) -- (380:0.5);
 	\draw (380:0.5)-- (60:0.5);

        	\draw (0:0.5) -- (20:0.5) -- (40:0.5) -- (60:0.5) -- (80:0.5) -- (100:0.5) -- (120:0.5)  -- (140:0.5) -- (160:0.5) -- (180:0.5) -- (200:0.5) -- (220:0.5) -- (240:0.5) -- (260:0.5) -- (280:0.5) -- (300:0.5) -- (320:0.5) -- (340:0.5) -- (360:0.5) -- (380:0.5);

\begin{scope}[xshift=42]
        
	\fill[black!20] (60:0.5) -- (120:0.5) -- (200:0.5) -- (240:0.5) -- (60+180:0.5) -- (120+180:0.5) -- (200+180:0.5) (240+180:0.5) -- (60:0.5) -- cycle;
	\node (d1') at (60:0.57cm){$d_1$};
	\node (d2') at (120:0.57cm){$d_2$};
	\node (dk+1') at (240:0.57cm){$d_{k-1}$};
	\node (dk+1') at (0,0){$\bullet$};
	\node (dk+2) at (0.1,-0.05){$d_k$};
	\fill (0,0) circle (0.5pt);
	
	\draw (60:0.5) -- (120:0.5);
	\draw[dotted] (120:0.5) -- (200:0.5);
	\draw (200:0.5) -- (240:0.5);
	\draw (60:0.5) -- (240:0.5);

	\draw (240:0.5) -- (300:0.5);
	\draw[dotted] (300:0.5) -- (20:0.5);
	\draw (380:0.5) -- (60:0.5);
	\draw (240:0.5) -- (60:0.5);

        	\draw (0:0.5) -- (20:0.5) -- (40:0.5) -- (60:0.5) -- (80:0.5) -- (100:0.5) -- (120:0.5)  -- (140:0.5) -- (160:0.5) -- (180:0.5) -- (200:0.5) -- (220:0.5) -- (240:0.5) -- (260:0.5) -- (280:0.5) -- (300:0.5) -- (320:0.5) -- (340:0.5) -- (360:0.5) -- (380:0.5);
	\end{scope}
\begin{scope}[xshift=84]
        	
	\fill[black!20] (60:0.5) -- (100:0.5) --(180:0.5) -- (0,0) -- (60:0.5) -- cycle;
	\fill[black!20] (60+180:0.5) -- (100+180:0.5) --(180+180:0.5) -- (0,0) -- (60+180:0.5) -- cycle;
	\node (d1) at (60:0.57cm){$d_1$};
	\node (d2) at (100:0.57cm){$d_2$};
	\node(dk+1) at (0,0){$\bullet$};
	\node (dk+2) at (0.1,-0.05){$d_k$};
	\fill(0,0) circle (0.5pt);
	
	\draw (60:0.5) -- (100:0.5);
	\draw[dotted] (100:0.5) -- (180:0.5);
	\draw (180:0.5) -- (0,0);
	\draw(60:0.5) -- (0,0);

	\fill(0,0) circle (0.5pt);
	
	\draw (240:0.5) -- (280:0.5);
	\draw[dotted] (280:0.5) -- (360:0.5);
	\draw (360:0.5) -- (0,0);
	\draw(240:0.5) -- (0,0);
	
        	\draw (0:0.5) -- (20:0.5) -- (40:0.5) -- (60:0.5) -- (80:0.5) -- (100:0.5) -- (120:0.5)  -- (140:0.5) -- (160:0.5) -- (180:0.5) -- (200:0.5) -- (220:0.5) -- (240:0.5) -- (260:0.5) -- (280:0.5) -- (300:0.5) -- (320:0.5) -- (340:0.5) -- (360:0.5) -- (380:0.5);
     \end{scope}
\begin{scope}[xshift=126]

	\fill[black!20] (60:0.5) -- (120:0.5) -- (200:0.5) -- (240:0.5) -- (60+180:0.5) -- (120+180:0.5) -- (200+180:0.5) (240+180:0.5) -- (60:0.5) -- cycle;
	\node (d1) at (60:0.57cm){$d_1$};
	\node (d2) at (120:0.57cm){$d_2$};
	\node (dk+1) at (240:0.57cm){$d_{k}$};
	
	\draw (60:0.5) -- (120:0.5);
	\draw[dotted] (120:0.5) -- (200:0.5);
	\draw (200:0.5) -- (240:0.5);
	\draw (60:0.5) -- (240:0.5);
	\draw[decorate, decoration={
    zigzag,
    segment length=4,
    amplitude=.9,post=lineto,
    post length=2pt
}] (57:0.5) -- (243:0.5);

	\draw (240:0.5) -- (300:0.5);
	\draw[dotted] (300:0.5) -- (20:0.5);
	\draw (20:0.5) -- (60:0.5);

        	\draw (0,0) circle(0.5cm);
     
  \end{scope}
  \end{tikzpicture}
\caption{Examples for central pairs of $\D$-cells. From left to right: When $\D$ contains no diameters; when $\D$ contains one diameter; when $\D$ contains more than one diameter and all are of the same colour; when $\D$ contains two diameters of different colour} \label{fig:C2}
}
\end{figure}
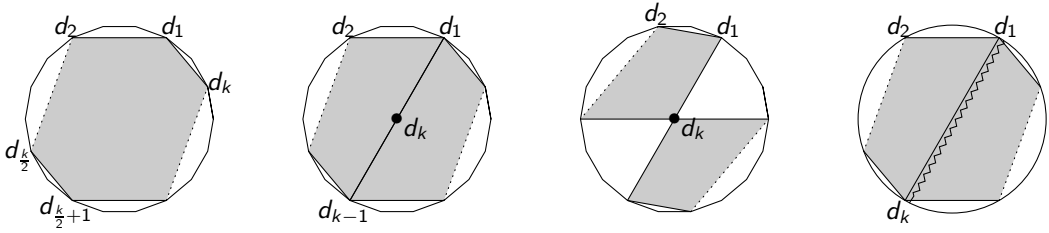

\end{example}

\begin{lemma}\label{convex}
Any $\D$-cell of Dynkin type $D_n$ is convex.
\end{lemma}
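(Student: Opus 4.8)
The plan is to prove convexity by showing that every interior angle of the polygon $\langle d_1, \ldots, d_k \rangle$ is at most $\pi$, which for a simple polygon is equivalent to convexity. First I would record that the polygon is simple and sits inside the disc: its edges are the straight segments attached to the elements of $r_{\D}(\D) \cup \mathcal{E}(\p_{2n})$, all of which lie in the closed disc $B$ bounded by the circumscribed circle of $\p_{2n}$; by Lemma \ref{non-crossing} the segments coming from $r_{\D}(\D)$ are pairwise non-crossing, while the boundary edges in $\mathcal{E}(\p_{2n})$ meet the interior only in shared endpoints. Hence the closed cell is a genuine polygonal region contained in $B$, its interior is well defined, and it suffices to bound each interior angle.

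I would then split into two cases according to the type of vertex. For a vertex $d_i \in \{0, \ldots, 2n-1\}$ on the circle, the whole disc $B$, and in particular the two neighbours $d_{i-1}$ and $d_{i+1}$, lies in the closed half-plane $H$ bounded by the tangent to the circle at $d_i$. Both edges at $d_i$ therefore run from $d_i$ into $H$, and since the interior of the cell lies in $B \subseteq H$, the corner of the cell at $d_i$ fills a wedge contained in a half-disc; thus the interior angle at $d_i$ is at most $\pi$. This argument uses only the geometry of the disc, and it already settles every non-central cell, because all of its vertices lie on the circle.

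For a central cell the sole remaining vertex is the centre $d_i = c$, where both incident edges are radii, say $\overline{(d_{i-1},c)}$ and $\overline{(d_{i+1},c)}$ in $r_{\D}(\D)$. Here I would exploit that $r_{\D}(\D)$ is invariant under rotation by $\pi$, so the antipodal radius $\overline{(d_{i-1}+n,c)}$ also lies in $r_{\D}(\D)$, and $\sphericalangle(d_{i-1},c,d_{i-1}+n) = \pi$. Applying the minimality clause of Definition \ref{D-cells} with $v = d_{i-1}+n$ gives $\sphericalangle(d_{i-1},c,d_{i+1}) \leq \sphericalangle(d_{i-1},c,d_{i-1}+n) = \pi$. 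Since $d_{i+1}$ is the first radius encountered when rotating clockwise from the incoming radius, this clockwise angle is exactly the interior angle at $c$, which is therefore at most $\pi$, completing the proof.

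The main obstacle is the central vertex: at the boundary vertices convexity is forced purely by the convexity of $B$, but at $c$ one genuinely needs both the $\pi$-rotation invariance of $r_{\D}(\D)$ (to produce an antipodal radius realising the angle $\pi$) and the minimality condition of Definition \ref{D-cells}. I would also take care to confirm that $\sphericalangle(d_{i-1},c,d_{i+1})$ is the interior angle and not its reflex complement, which amounts to checking that the minimal-clockwise-turn rule traces the cell in a consistent (anti-clockwise) orientation.
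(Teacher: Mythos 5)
Your proof is correct, and at the central vertex it coincides exactly with the paper's: both of you apply the minimality clause of Definition \ref{D-cells} to the antipodal radius $v = d_{i-1}+n$, which is available in $r_{\D}(\D)$ by $\pi$-rotation invariance (indeed $\overline{(d_{i-1},c)}$ is already a \emph{pair} of radii) and realises the clockwise angle exactly $\pi$, giving $\sphericalangle(d_{i-1},c,d_{i+1}) \leq \pi$. At the boundary vertices, however, you take a genuinely different route. The paper stays inside the combinatorics of the definition: it bounds $\sphericalangle(d_{i-1},d_i,d_{i+1}) \leq \sphericalangle(d_{i-1},d_i,d_i+1) < \pi$, playing the minimal-turning-angle condition off against the boundary edge $(d_i,d_i+1)$ --- note that this tacitly reads the minimality clause as ranging over $r_{\D}(\D) \cup \mathcal{E}(\p_{2n})$, as in the construction of cells in the proof of Lemma \ref{divides}, whereas Definition \ref{D-cells} literally quantifies only over $r_{\D}(\D)$. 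Your tangent-half-plane argument bypasses this point entirely: it uses only that the cell is a simple polygon (hence contained in the convex hull of its vertices, hence in the closed disc bounded by the circumscribed circle), and it yields the strict bound $< \pi$ at boundary vertices without invoking minimality at all, which makes it slightly more robust, at the cost of the preliminary simplicity discussion via Lemma \ref{non-crossing}. The paper's version is shorter and handles both vertex types by one uniform mechanism. Both arguments share the same unproved orientation convention --- that the clockwise angle $\sphericalangle(d_{i-1},d_i,d_{i+1})$, respectively your corner sector, is the interior angle rather than its reflex complement, i.e.\;that cells are traversed anti-clockwise --- which the paper asserts silently and you at least flag explicitly.
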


\begin{proof}
Let $\langle d_1, \ldots, d_k \rangle$ be a $\D$-cell of Dynkin type $D_n$. The interior angles are the angles  $\sphericalangle(d_{i-1}, d_i, d_{i+1})$ for $i \in \{1, \ldots, k\}$. If $d_i \in \{0, \ldots, 2n-1\}$, then $\sphericalangle(d_{i-1}, d_i, d_{i+1}) \leq \sphericalangle(d_{i-1}, d_i, d_i+1) < \pi$. If $d_i = c$, then $\sphericalangle(d_{i-1}, d_i, d_{i+1}) \leq \sphericalangle(d_{i-1}, d_i, d_{i-1}+n) = \pi$.
\end{proof}

\begin{definition}
We say that a diameter or a pair of arcs $\overline{(a,b)} \in \A(\p_{2n})$ is {\em contained in a pair of $\D$-cells $\overline{\langle d_1, \ldots, d_k \rangle}$} if $r_{\D}(\overline{(a,b)}) = \overline{(d_i,d_j)}$ for some $i,j \in \{1, \ldots, k\}$ with $j \notin \{i-1,i,i+1\}$.
\end{definition}

For a pair of arcs $\overline{(i,j)}$ to be contained in a pair of $\D$-cells $\overline{\langle d_1, \ldots, d_k \rangle}$ means that either the arc $(i,j)$ is a diagonal in $\langle d_1, \ldots, d_k \rangle$ and $(i+n,j+n)$ is a diagonal in $<d_1+n, \ldots, d_k+n>$ or vice versa. For a coloured diameter $\overline{(i,i+n)}$ to be contained in a pair of $\D$-cells $\overline{\langle d_1, \ldots, d_k \rangle}$ can mean any of the following two: Either $r_\D(\overline{(i,i+n)})$ is a diameter and it is a diagonal in $\langle d_1, \ldots, d_k \rangle$ and $\langle d_1+n, \ldots, d_k+n \rangle$ or it is a pair of radii and $\overline{(i,c)}$ is a diagonal in $\langle d_1, \ldots, d_k \rangle$ and $(i+n, c)$ is a diagonal in $\langle d_1+n, \ldots, d_k+n \rangle$ or vice versa. 

\begin{lemma}\label{divides}\label{diagonal}
Every element $\overline{(a,b)}\in \A(\p_{2n})$ which is contained in a pair of $\D$-cells $\overline{\langle d_1, \ldots, d_k\rangle}$ lies in $\nc \D \setminus \D$. On the other hand, every diameter or pair of arcs $\overline{(a,b)}\in \nc \D \setminus \D$ is contained in a unique pair of $\D$-cells.
\end{lemma}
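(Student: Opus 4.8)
The plan is to read both halves of the statement off the planar dissection of the $2n$-gon (augmented by the central vertex $c$ whenever $\D$ has a diameter) cut out by $r_{\D}(\D)\cup\mathcal{E}(\p_{2n})$, whose faces I claim are exactly the $\D$-cells of Definition \ref{D-cells}. The preliminary point is that $r_{\D}$ respects crossings: by the crossing conventions for radii introduced after Definition \ref{radii}, a diameter or pair of arcs $\overline{(a,b)}$ crosses an element $\overline{(p,q)}\in\D$ if and only if $r_{\D}(\overline{(a,b)})$ crosses $r_{\D}(\overline{(p,q)})$. Hence $\overline{(a,b)}\in\nc\D$ exactly when $r_{\D}(\overline{(a,b)})$ crosses no element of $r_{\D}(\D)$, and $r_{\D}(\D)$ is non-crossing by Lemma \ref{non-crossing}. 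The angle-minimality clause of Definition \ref{D-cells} is precisely the rule ``leave each vertex along the clockwise-first arc of $r_{\D}(\D)$'', so tracing it produces the boundary walks of the faces, each of which is convex by Lemma \ref{convex}.

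For the first assertion let $\overline{(a,b)}$ be contained in a pair of $\D$-cells, so that $r_{\D}(\overline{(a,b)})=\overline{(d_i,d_j)}$ with $d_i,d_j$ vertices of a convex cell $\langle d_1,\dots,d_k\rangle$ and $j\notin\{i-1,i,i+1\}$. Since the cell is convex, the chord $\overline{(d_i,d_j)}$ stays inside it and meets the boundary only at its endpoints, so it crosses no edge of the dissection, in particular no element of $r_{\D}(\D)$; undoing $r_{\D}$ gives $\overline{(a,b)}\in\nc\D$. To see $\overline{(a,b)}\notin\D$ I argue by contradiction: if $\overline{(a,b)}\in\D$ then $\overline{(d_i,d_j)}\in r_{\D}(\D)$, and since $d_j$ is a non-adjacent vertex of a convex polygon its ray lies strictly inside the interior angle at $d_i$, so $0<\sphericalangle(d_{i-1},d_i,d_j)<\sphericalangle(d_{i-1},d_i,d_{i+1})$ (the strictness and $d_j\neq d_{i+1}$ coming from Lemma \ref{zeroangle}). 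This contradicts the minimality in Definition \ref{D-cells} applied with $v=d_j$. Hence $\overline{(a,b)}\in\nc\D\setminus\D$.

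Conversely let $\overline{(a,b)}\in\nc\D\setminus\D$ and set $\alpha=r_{\D}(\overline{(a,b)})$. By the first paragraph $\alpha$ crosses no arc of $r_{\D}(\D)$, and being an internal arc it crosses no edge of $\mathcal{E}(\p_{2n})$ either, so $\alpha$ is a chord whose relative interior meets no boundary edge of the dissection and therefore lies in the closure of a single face $F$; its two endpoints are vertices of $F$. In the generic situation (where $\overline{(a,b)}$ is a pair of arcs, or a coloured diameter whose opposite colour is absent from $\D$ so that $\alpha$ is a pair of radii) the relative interior of $\alpha$ lies in the open face, $\alpha$ is not a boundary edge, and its endpoints are non-adjacent in $F$; thus $\overline{(a,b)}$ is contained in the pair of $\D$-cells determined by $F$. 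Uniqueness is immediate, since the relative interior of $\alpha$ singles out the open face $F$ and hence the pair containing $\overline{(a,b)}$.

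The one genuinely delicate point, and the main obstacle, is the role of the diameters, which is why Definition \ref{radii} is set up as it is. One must run the converse through the case distinction of $r_{\D}$: when $\overline{(a,b)}$ is a coloured diameter whose opposite colour lies in $\D$, its image $\alpha$ is the through-chord $(a,a+n)$, which geometrically coincides with the two radial edges meeting at $c$ yet combinatorially joins the two non-adjacent cell-vertices $a$ and $a+n$ across $c$, so it must be recognised as a proper diagonal rather than an edge; here $r_{\D}$ is not injective, and $\alpha\in r_{\D}(\D)$ does \emph{not} force $\overline{(a,b)}\in\D$. Matching the three images (pair of arcs, through-diameter, pair of radii) to the four configurations of Figure \ref{fig:C2}, and checking in the through-diameter case that the endpoints are combinatorially non-adjacent in the relevant central pair of cells, is where the real work lies; once this bookkeeping is in place, existence, uniqueness, and the first assertion all follow from convexity (Lemma \ref{convex}) and the minimality built into Definition \ref{D-cells}.
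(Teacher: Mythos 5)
Your first assertion is handled correctly and essentially as in the paper (the chord argument for $\nc\D$, and the contradiction with the angle-minimality of $\sphericalangle(d_{i-1},d_i,d_{i+1})$ for $\D$). The genuine gap is in the converse, where your entire argument rests on the unproven claim that the faces of the dissection cut out by $r_{\D}(\D)\cup\mathcal{E}(\p_{2n})$ are exactly the $\D$-cells (``tracing the clockwise-first rule produces the boundary walks of the faces''). That claim \emph{is} the existence half of the lemma, and it is where the paper's proof does most of its work: one must show that the greedy angle-minimal walk $d_1=a'$, $d_{i+1}=\min_{\sphericalangle(d_{i-1},d_i,v)>0}\{v \mid \overline{(d_i,v)}\in r_{\D}(\D)\cup\mathcal{E}(\p_{2n})\}$ actually closes up into a simple polygon --- the paper chooses $k$ minimal with a repeated vertex $d_{k+1}=d_i$, uses Lemma \ref{zeroangle} to get $\sphericalangle(d_k,d_i,d_{i+1})>0$, and deduces that if $i\geq 2$ then $(d_{i-1},d_i)$ would be a side or diagonal of $\langle d_i,\ldots,d_k\rangle$, contradicting distinctness --- and must then separately verify the minimality condition at the starting vertex $d_1$, which again invokes the already-proved first assertion. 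None of this is automatic from planar topology; indeed the ``dissection'' cannot even be read purely geometrically, because crossing at the centre is a combinatorial convention (same-coloured radii and diameters meet at $c$ without crossing), so a topological notion of ``face'' is not well defined without exactly the bookkeeping you defer.

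The same deferral breaks your uniqueness claim. You call it ``immediate, since the relative interior of $\alpha$ singles out the open face,'' but this fails in precisely the through-diameter case you yourself flag: when $\overline{(a,b)}$ is a coloured diameter whose opposite colour lies in $\D$, its image $\alpha=(a,a+n)$ runs along the two radial boundary edges shared by a central cell and its $\pi$-rotation, so its relative interior meets no open face at all. Here the paper's argument is genuinely needed: by the first assertion no edge of one $\D$-cell can be a diagonal of another, so two convex cells sharing the diagonal must share at least two edges; convexity then forces consecutive shared edges with interior angle $\pi$, which must be a pair of radii in $r_{\D}(\D)$, and rotation invariance of $\D$ shows the two cells form a single pair. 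Since you explicitly park both this and the through-diameter existence check as ``where the real work lies'' without carrying them out, your proposal is an outline of the paper's proof with its two hardest steps --- the closing-up of the boundary walk and the degenerate central-cell analysis --- missing.
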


\begin{proof}
Assume that $\overline{(a,b)} \in \A(\p_{2n})$ is contained in a pair of $\D$-cells  $\overline{\langle d_1, \ldots, d_k \rangle}$. Assume by contradiction that $\overline{(a,b)}$ lies in $\D$. Then for some $1 \leq i ,j \leq k$ with $j \notin \{i-1,i,i+1\}$, we have $r_{\D}(\overline{(a,b)}) = \overline{(d_{i},d_{j})} \in r_{\D}(\D)$, which contradicts the minimality of $\sphericalangle(d_{i-1}, d_{i}, d_{i+1})$. So we have $\overline{(a,b)} \notin \D$. Assume now that $\overline{(x,y)} \in \A(\p_{2n})$ crosses $\overline{(a,b)}$. Then either it crosses a pair of sides of $\overline{\langle d_1, \ldots, d_k \rangle}$ or it is also contained in $\overline{\langle d_1, \ldots, d_k \rangle}$. In particular, the diameter or pair of arcs $\overline{(x,y)}$ cannot lie in $\D$ and thus $\overline{(a,b)} \in \nc \D \setminus \D$. This proves the first statement of Lemma \ref{divides}.

Let $\overline{(a,b)}\in \nc \D \setminus \D$ with $r_{\D}(\overline{(a,b)}) = \overline{(a',b')}$. We first show the existence of a $\D$-cell of Dynkin type $D_n$ containing  $\overline{(a,b)}$. Construct a sequence of vertices by setting
\begin{eqnarray*}
d_1 &=& a' \\ d_2 &=& \min_{\sphericalangle(b',a',v)>0}\{v \in \{0, \ldots, 2n-1\}\cup \{c\}|\overline{(a',v)} \in r_{\D}(\D) \cup \mathcal{E}(\p_{2n})\}
\end{eqnarray*}
and for $i \geq 2$: 
\begin{eqnarray*}
d_{i+1} &=& \min_{\sphericalangle(d_{i-1},d_i,v)>0}\{v \in \{0, \ldots, 2n-1\}\cup \{c\}|\overline{(d_i,v)} \in r_{\D}(\D) \cup \mathcal{E}(\p_{2n})\}.
\end{eqnarray*}
We show that there exists a $k \in \mathbb{Z}$, such that $\langle d_1, \ldots, d_k \rangle$ is a polygon. Since the set of vertices $\{0, \ldots, 2n-1\} \cup \{c\}$ is finite, there is a $k \in \mathbb{Z}_{\geq 3}$, such that $d_{k+1} = d_i$ for some $i < k$. Chose $k$ to be minimal with this property, i.e.\;let $k$ be such that there is a $1 \leq i < k$ with $d_{k+1} = d_i$ and such that $d_1, \ldots, d_k$ are pairwise distinct.
Since by Lemma \ref{non-crossing} the arcs $(d_j,d_{j+1})$ are pairwise non-crossing, $\langle d_i, \ldots, d_k \rangle$ is a polygon. Assume, for a contradiction, that $i \geq 2$. Then $\sphericalangle(d_k,d_i,d_{i+1})>0$. Otherwise, by Lemma \ref{zeroangle} we get $d_k=d_{i+1}$ and thus $d_i=d_{i+2}$, which contradicts the condition $\sphericalangle(d_i,d_{i+1},d_{i+2})>0$. So we have
$$
\sphericalangle(d_{i-1}, d_i, d_{i+1}) \leq \sphericalangle(d_k, d_i, d_{i+1})
$$
and thus, since $(d_{i-1}, d_i)$ does not intersect any side of the polygon $\langle d_i, \ldots, d_k \rangle$, it is a diagonal or a side in $\langle d_i, \ldots, d_k \rangle$. Therefore we have $d_{i-1} = d_l$ for some $i < l \leq k$. This contradicts the assumption that $d_1, \ldots, d_k$ are pairwise distinct. Therefore $d_i = d_1$ and $\langle d_1, \ldots, d_k \rangle$ is a polygon.

By definition, for $1 < i \leq k$ the angles $\sphericalangle(d_{i-1}, d_i, d_{i+1})$ satisfy the minimality condition for angles in a $\D$-cell of Dynkin type $D_n$. Furthermore, if $\overline{(d_1,v)} \in r_{\D}(\A(\p_{2n}))$ is such that $0 < \sphericalangle(d_k,d_1,v) < \sphericalangle(d_k,d_1,d_2)$, then either it is contained in $\overline{\langle d_1, \ldots, d_k \rangle}$ or it intersects one of its pairs of sides. Thus by the first part of the proof it cannot be an element of $\D$. So $\sphericalangle(d_k,d_1,d_2)$ satisfies the minimality condition for angles in a $\D$-cell of Dynkin type $D_n$ and the polygon $\langle d_1, \ldots, d_k \rangle$ is a $\D$-cell of Dynkin type $D_n$. 

Because $r_\D(\overline{(a,b)}) = \overline{(a',b')}$ does not intersect any of the pairs of sides $\overline{(d_i,d_{i+1})}$ of $\overline{\langle d_1, \ldots, d_k\rangle}$ and because by definition of $d_2$ we have $\sphericalangle(b',a', d_2) \leq \sphericalangle(d_k,d_1,d_2)$, the arc $(a',b')$ is a diagonal in the polygon $\langle d_1, \ldots, d_k \rangle$  and thus $\overline{(a,b)}$ is contained in $\overline{\langle d_1, \ldots, d_k\rangle}$.

It remains to show uniqueness. For $\overline{(a,b)} \in \A(\p_{2n})$ with $r_{\D}(\overline{(a,b)})=\overline{(a',b')}$ let $\langle d_1, \ldots, d_k\rangle$ and $\langle d'_1, \ldots, d'_l\rangle$ be $\D$-cells of Dynkin type $D_n$ containing the arc $(a',b')$ as a diagonal. Therefore, the two convex polygons $\langle d_1, \ldots, d_k \rangle$ and $\langle d'_1, \ldots, d'_k \rangle$ share a diagonal. By Lemma \ref{diagonal}, none of the edges of $\langle d'_1, \ldots, d'_k \rangle$ can be a diagonal in  $\langle d_1, \ldots, d_k \rangle$ nor vice versa. So the two polygons share at least two edges. Since they are convex, if they are not identical then they share consecutive edges with angle $\pi$ in between, which must be a pair of radii $\overline{(x,c)} \in r_{\D}(\D)$ for some $x \in \{0, \ldots, 2n-1\}$. Then by rotational invariance of $\D$, we get $\langle d'_1, \ldots, d'_l \rangle = \langle d_1+n, \ldots, d_k+n \rangle$ and the pairs of $\D$-cells $\overline{\langle d_1, \ldots, d_k \rangle}$ and $\overline{\langle d'_1, \ldots, d'_l \rangle}$ are identical.
\end{proof}

\begin{example}
Figure \ref{fig:E} shows, for different non-crossing diagrams $\D$ of Dynkin type $D_n$, examples of elements of $\nc \D \setminus \D$ and the pairs of $\D$-cells they are contained in.

\begin{figure}
\begin{centering}
\begin{tikzpicture}[scale=2.5,cap=round,>=latex, font = \footnotesize, font=\sansmath\sffamily
]
        
	\fill[black!20] (60:0.5) -- (200:0.5) -- (160:0.5) -- (140:0.5) -- (60:0.5) -- cycle;
	\fill[black!20] (60+180:0.5) -- (200+180:0.5) -- (160+180:0.5) -- (140+180:0.5) -- (60+180:0.5) -- cycle;

	\node (d1) at (60:0.57){$a$};
	\node (d2) at (160:0.57cm){$b$};
%
	
	\draw[ultra thick] (60:0.5) -- (120:0.5);
	\draw[ultra thick] (60:0.5) -- (140:0.5);
	\draw (60:0.5) -- (160:0.5);
	\draw[ultra thick] (200:0.5) -- (240:0.5);
	\draw[ultra thick] (160+180:0.5) -- (20:0.5);
	\draw[ultra thick] (200:0.5) -- (160:0.5);
	
	\draw[ultra thick] (60:0.5) -- (200:0.5);
	\draw[ultra thick](60+180:0.5) -- (200+180:0.5);	
	
	\draw[ultra thick] (240:0.5) -- (300:0.5);
	\draw[ultra thick] (20:0.5) -- (60:0.5);
	\draw (60+180:0.5) -- (160+180:0.5);
	\draw[ultra thick] (60+180:0.5) -- (140+180:0.5);

        	\draw (0:0.5) -- (20:0.5) -- (40:0.5) -- (60:0.5) -- (80:0.5) -- (100:0.5) -- (120:0.5)  -- (140:0.5) -- (160:0.5) -- (180:0.5) -- (200:0.5) -- (220:0.5) -- (240:0.5) -- (260:0.5) -- (280:0.5) -- (300:0.5) -- (320:0.5) -- (340:0.5) -- (360:0.5) -- (380:0.5);
  
\begin{scope}[xshift = 42]
        
	\fill[black!20] (60:0.5) -- (160:0.5) -- (200:0.5) -- (240:0.5) -- (160+180:0.5) -- (20:0.5) -- (60:0.5) -- cycle;	
	\node  (d1) at (60:0.57cm){$a$};
	\node (dk+1) at (240:0.57cm){$b$};
	\fill (0,0) circle (0.5pt);
	
	\draw[ultra thick] (60:0.5) -- (120:0.5);
	\draw[ultra thick] (200:0.5) -- (240:0.5);
	\draw[ultra thick] (60:0.5) -- (240:0.5);
	\draw[decorate, decoration={
    zigzag,
    segment length=4,
    amplitude=.9,post=lineto,
    post length=2pt
}] (55:0.5) -- (245:0.5);
	\draw[ultra thick] (60:0.5) -- (160:0.5);
	\draw[ultra thick] (60+180:0.5) -- (160+180:0.5);
	\draw[ultra thick] (200:0.5) -- (240:0.5);
	\draw[ultra thick] (160:0.5) -- (200:0.5);
	\draw[ultra thick] (160+180:0.5) -- (200+180:0.5);
	
	\draw[ultra thick] (240:0.5) -- (300:0.5);
	\draw[ultra thick] (20:0.5) -- (60:0.5);

        	\draw (0:0.5) -- (20:0.5) -- (40:0.5) -- (60:0.5) -- (80:0.5) -- (100:0.5) -- (120:0.5)  -- (140:0.5) -- (160:0.5) -- (180:0.5) -- (200:0.5) -- (220:0.5) -- (240:0.5) -- (260:0.5) -- (280:0.5) -- (300:0.5) -- (320:0.5) -- (340:0.5) -- (360:0.5) -- (380:0.5);
 \end{scope}
\begin{scope}[xshift = 84]
        
	\fill[black!20] (60:0.5) -- (140:0.5) -- (160:0.5) -- (0,0) -- (20:0.5) -- (60:0.5) -- cycle;	
	\fill[black!20] (60+180:0.5) -- (140+180:0.5) -- (160+180:0.5) -- (0,0) -- (200:0.5) -- (60+180:0.5) -- cycle;	

	\node (d1) at (60:0.57cm){$a$};
	\fill (0,0) circle (0.5pt);
	\node (d1) at (240:0.57cm){$b$};

	\draw[ultra thick] (60:0.5) -- (120:0.5);
	\draw[ultra thick] (60:0.5) -- (140:0.5);
	\draw[ultra thick] (200:0.5) -- (240:0.5);
	\draw[ultra thick] (160+180:0.5) -- (20:0.5);
	\draw[ultra thick] (200:0.5) -- (160:0.5);
	\draw[ultra thick] (20:0.5) -- (200:0.5);
	\draw[ultra thick] (160+180:0.5) -- (160:0.5);
	\draw (60:0.5) -- (240:0.5); 
	\draw[ultra thick] (140:0.5) -- (160:0.5);
	\draw[ultra thick] (140+180:0.5) -- (160+180:0.5);

	\draw[ultra thick] (240:0.5) -- (300:0.5);
	\draw[ultra thick] (20:0.5) -- (60:0.5);
	\draw[ultra thick] (60+180:0.5) -- (140+180:0.5);

        	\draw (0:0.5) -- (20:0.5) -- (40:0.5) -- (60:0.5) -- (80:0.5) -- (100:0.5) -- (120:0.5)  -- (140:0.5) -- (160:0.5) -- (180:0.5) -- (200:0.5) -- (220:0.5) -- (240:0.5) -- (260:0.5) -- (280:0.5) -- (300:0.5) -- (320:0.5) -- (340:0.5) -- (360:0.5) -- (380:0.5);
\end{scope}
\end{tikzpicture}
\caption{The pictures illustrate, for different non-crossing diagrams $\D$, elements of $\nc \D \setminus \D$ and the pair of $\D$-cells they are contained in. The elements of $\D$ are marked by thick lines and the pair of $\D$-cells containing $\overline{(a,b)} \in \nc \D \setminus \D$ are marked in grey}
\label{fig:E}
\end{centering}
\end{figure}
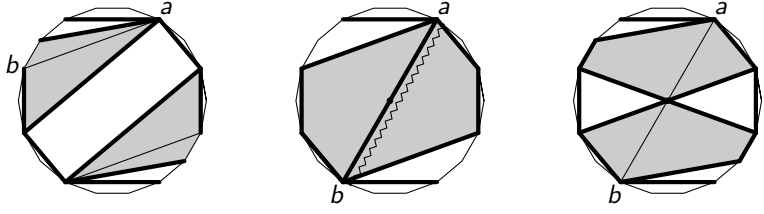
\end{example}

\begin{lemma} \label{diameters}
There exists a $\pi$-rotation invariant $\D$-cell $\langle d_1, \ldots, d_k\rangle = \langle d_1+n, \ldots, d_k+n \rangle$ of Dynkin type $D_n$ if and only if $\D$ contains no diameters. In this case, the $\pi$-rotation invariant $\D$-cell of Dynkin type $D_n$ is unique, central and it contains all diameters in $\nc \D \setminus \D$. Furthermore, if $\langle d_1, \ldots, d_k\rangle$ is a $\pi$-rotation invariant $\D$-cell of Dynkin type $D_n$, then $(d_i,d_j)$ is a diameter if and only if $(d_{i-1}, d_{j-1})$ is a diameter.
\end{lemma}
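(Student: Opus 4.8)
The plan is to make everything turn on one geometric observation: a convex polygon fixed by the point reflection $\sigma$ (rotation by $\pi$ about the centre $c$) must contain $c$, since for any point $p$ in the cell the midpoint of $p$ and $\sigma(p)=2c-p$ equals $c$ and lies in the cell by convexity. Hence every $\pi$-rotation invariant $\D$-cell is automatically \emph{central}, which already disposes of that part of the second assertion. I would record this first, together with the facts (from Lemma \ref{non-crossing}) that no two elements of $r_{\D}(\D)$ cross and that a genuine diameter crosses none of the edges in $\mathcal{E}(\p_{2n})$.

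For the equivalence I would analyse the position of $c$ with respect to $r_{\D}(\D)$, using the trichotomy built into Definition \ref{radii}: since $\D$ is non-crossing, its diameters form either a single matched pair $\overline{(x,x+n)}_r,\overline{(x,x+n)}_g$ at one position, kept by $r_{\D}$ as the genuine diameter $(x,x+n)$, or finitely many diameters of a single colour, each replaced by a pair of radii ending at $c$. If $\D$ has a genuine diameter, it lies in $r_{\D}(\D)$ and runs through $c$; by Lemma \ref{non-crossing} it crosses no side of a cell and it crosses no edge, so inside any cell containing $c$ it would have to be a diagonal, contradicting the minimality of the angles in Definition \ref{D-cells}. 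Thus $c$ can only lie on this diameter as a boundary side, which forces the two adjacent cells to be swapped by $\sigma$ and none to be invariant. If instead $\D$ has single-coloured diameters, then $c$ is an endpoint of radii in $r_{\D}(\D)$, so every cell meeting $c$ has $c$ as a vertex; by Lemma \ref{convex} such a cell occupies an angular sector at $c$ of angle at most $\pi$, which $\sigma$ carries to the opposite sector, so $\sigma$ cannot fix it. Either way, existence of a $\pi$-invariant cell forces $\D$ to contain no diameters.

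Conversely, when $\D$ has no diameters the map $r_{\D}$ is the identity, no radius is introduced, and no chord of $\D$ passes through $c$ (only diameters do); hence the cells are genuine convex polygons tiling $\p_{2n}$, and $c$ lies in the interior of exactly one of them, say $C$. Because $\D=\sigma(\D)$, the whole cell decomposition is $\sigma$-invariant, so $\sigma(C)$ is again a cell with $c$ in its interior, and disjointness of cell interiors gives $\sigma(C)=C$; this yields existence, and the same disjointness gives uniqueness. For the diameters of $\nc\D\setminus\D$: any such $\overline{(a,a+n)}$ is fixed by $r_{\D}=\mathrm{id}$, so by Lemma \ref{divides} it is a diagonal of a unique pair of cells, and since it runs through $c\in\mathrm{int}(C)$ this pair is $C$, so $\overline{(a,a+n)}$ is contained in $C$.

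For the final assertion I would use that $\sigma$ is an orientation-preserving isometry carrying the convex polygon $\langle d_1,\dots,d_k\rangle$ onto itself, hence acting on the cyclically ordered vertices as a shift $\ell\mapsto\ell+m$ with $d_\ell+n=d_{\ell+m}$ for all $\ell$; since $c$ is not a vertex (by the above) and $\sigma^2=\mathrm{id}$, one gets $m=k/2$. Then $(d_i,d_j)$ is a diameter exactly when $d_j=d_i+n=d_{i+m}$, i.e.\ $j\equiv i+m$, while $(d_{i-1},d_{j-1})$ is a diameter exactly when $j-1\equiv (i-1)+m$; the two congruences coincide, giving the equivalence. I expect the forward direction to be the main obstacle: one must correctly split into the genuine-diameter and single-colour cases and verify in each that the replacement map $r_{\D}$ pins $c$ onto a cell boundary, so that the central cells necessarily occur as a $\sigma$-swapped pair rather than as a single invariant cell.
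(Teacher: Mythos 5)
Your centrality observation (the midpoint of $p$ and $\sigma(p)$ is $c$, so a convex $\sigma$-invariant cell contains $c$), your forward direction, and your final shift argument are all sound. The forward direction is in fact more detailed than the paper's: the paper simply notes that any diameter passes through $c$, which lies in the interior of the invariant cell, so a diameter of $\D$ would either cross a side of the cell (impossible by non-crossing) or be a diagonal, hence lie in $\nc \D \setminus \D$ by Lemma \ref{divides}; your trichotomy on the diameters of $\D$ via the replacement map of Definition \ref{radii} (matched pair kept as a genuine diameter versus single-colour diameters replaced by radii) is a correct reading of that definition and reaches the same conclusion. Your treatment of the last assertion --- $\sigma$ acts on the cyclically ordered vertices as the shift by $k/2$, so $(d_i,d_j)$ is a diameter iff $j \equiv i + k/2 \pmod{k}$, a condition invariant under shifting both indices by $-1$ --- is essentially identical to the paper's, which writes the invariant cell as $\langle d_1, \ldots, d_{k/2}, d_1+n, \ldots, d_{k/2}+n\rangle$.

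The genuine gap is in the converse. You assert that when $\D$ has no diameters ``the cells are genuine convex polygons tiling $\p_{2n}$, and $c$ lies in the interior of exactly one of them.'' Neither the covering property nor the disjointness of cell interiors is established anywhere in the paper: Lemma \ref{divides} only constructs a cell \emph{around a given element of} $\nc \D \setminus \D$, and its uniqueness statement concerns the pair of cells containing a fixed diagonal, not disjointness of distinct cells. The covering claim at the single point $c$ is precisely the crux of this direction: to exhibit a cell containing $c$ one needs an element of $\nc \D \setminus \D$ through $c$, i.e.\;a diameter in $\nc \D \setminus \D$, and the existence of such a diameter when $\D$ contains none is a nontrivial combinatorial fact which the paper imports as Lemma 5.1 of \cite{HJR} before applying Lemma \ref{divides} (the diameter is a diagonal of both a cell $C$ and of $\sigma(C)$; since no radii occur, the uniqueness argument forces $\sigma(C) = C$). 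So your tiling claim, while true, has a proof of comparable substance to the step it is meant to replace, and as written the existence half of the equivalence is unsupported. Note also that your ``contains all diameters'' step leans on the same unproven disjointness of interiors; it can be repaired without it, as in the paper, by observing that a diameter in $\nc \D \setminus \D$ passes through $c \in \operatorname{int}(C)$, crosses no side of $C$ (sides lie in $r_{\D}(\D) \cup \mathcal{E}(\p_{2n})$, and diameters never cross edges), and hence, its endpoints being vertices of $C$, is a diagonal of $C$.
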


\begin{proof}
Let $\langle d_1, \ldots, d_k\rangle$ be a $\pi$-rotation invariant $\D$-cell of Dynkin type $D_n$. It is of the form
$$\langle d_1, \ldots, d_k\rangle = \langle d_1, \ldots, d_{\frac{k}{2}}, d_{\frac{k}{2}+1} = d_1 +n, \ldots, d_k=d_{\frac{k}{2}}+n \rangle,$$
with $k \geq 4$ even. In particular, $\overline{(d_i,d_j)}$ is a diameter if and only if $j = \frac{k}{2}+i$, which is the case if and only if $\overline{(d_{i-1},d_{j-1})}$ is a diameter.
Furthermore with the diameters $\overline{(d_i, d_i+n)} = \overline{(d_i,d_{\frac{k}{2}+i})}$, the $\D$-cell $\langle d_1, \ldots, d_k\rangle$ of Dynkin type $D_n$ contains the central vertex $c$. It is thus a central $\D$-cell of Dynkin type $D_n$ and $c$ lies in its interior. Any diameter $\overline{(a,a+n)} \in \A(\p_{2n})$ contains the vertex $c$ and is therefore either contained in $\langle d_1, \ldots, d_k \rangle$ or crosses one of the pairs of arcs $\overline{(d_i,d_{i+1})}$. So any $\pi$-rotation invariant $\D$-cell of Dynkin type $D_n$ contains all diameters in $\nc \D \setminus \D$ and if there is a $\pi$-rotation invariant $\D$-cell $\langle d_1, \ldots, d_k \rangle$ of Dynkin type $D_n$, there is at least one diameter in $\nc \D \setminus \D$, e.g.\;$(d_1,d_1+n)$. By Lemma \ref{divides} the $\pi$-rotation invariant $\D$-cell of Dynkin type $D_n$ is thus unique if it exists and in this case, also by Lemma \ref{diagonal}, $\D$ contains no diameters.

Suppose, on the other hand, that $\D$ contains no diameters. By Lemma 5.1 in Holm, J\o rgensen and Rubey's paper \cite{HJR}, there exists a diameter $\overline{(a,a+n)}$ in $\nc \D \setminus \D$. By rotation invariance of $\D$ and because $\D$ contains no diameters, the diameter $\overline{(a,a+n)}$ is contained in a rotation invariant $\D$-cell.
\end{proof}

We now define mutation with respect to $\D$. Informally speaking, one can think of the mutation $\mu_{\D}$, respectively $\mu^-_{\D}$ as rotating the interior of each $\D$-cell of Dynkin type $D_n$ in an anti clockwise, respectively clockwise, direction.

\begin{definition}[mutation of diagrams of Dynkin type $D_n$] \label{combinatorial mutation}
For every non-crossing diagram $\D$ of Dynkin type $D_n$ we define the mutation maps
$$
\mu_{\D}: \nc \D \to \nc \D
\quad
\text{ and }
\quad
\mu^-_{\D}: \nc \D \to \nc \D
$$
as follows.
\begin{itemize}
\item{The maps $\mu_{\D}$ and $\mu^-_{\D}$ leave $\D \subset \nc \D$ invariant:
$$
\mu_{\D}\bigr|_{\D} = \mu^-_{\D}\bigr|_{\D} = \id_{\D}.
$$
}
\item{Suppose $\overline{(a,b)} \in \nc \D \setminus \D$. By Lemma \ref{divides}, the element $\overline{(a,b)}$ is contained in a unique pair of $\D$-cells $\overline{\langle d_1, \ldots, d_k\rangle}$ and thus $r_{\D}(\overline{(a,b)}) = \overline{(d_i, d_j)}$ for some $i,j \in \{1, \ldots, k\}$. We set
$$
\mu_{\D}(\overline{(a,b)}) = r^{-1}_{\D}(\overline{(d_{i+1},d_{j+1})})
$$
and
$$
\mu^-_{\D}(\overline{(a,b)}) = r^{-1}_{\D}(\overline{(d_{i-1},d_{j-1})}),
$$
where the colour of $\mu_{\D}(\overline{(a,b)})$, respectively $\mu^-_{\D}(\overline{(a,b)})$, if it is a diameter or pair of radii, is specified as follows.
\begin{itemize}
\item{If $\D$ contains no diameters, then by Lemma \ref{diameters} all diameters are contained in the unique central pair of $\D$-cells $\langle d_1, \ldots, d_n\rangle$ and only diameters get mutated to diameters. We define both $\mu_{\D}$ and $\mu^-_{\D}$ to change their colour. So if $\overline{(a,b)}$ is a red diameter, both $\mu_{\D}(\overline{(a,b)})$ and $\mu^-_{\D}(\overline{(a,b)})$ are set to be green and vice versa.
} 
\item{If $\D$ contains more than one diameter of the same colour, then all diameters in $\D$ are of the same colour. Those of $\mu_{\D}(\overline{(a,b)})$ and $\mu^-_{\D}(\overline{(a,b)})$ which are diameters are set to be of the same colour as all the diameters in $\D$.}
\item{If $\D$ contains exactly one diameter $\overline{(x,x+n)}$, then if $\mu_{\D}(\overline{(a,b)}) = \overline{(a',b')}$, respectively $\mu^-_{\D}(\overline{(a,b)}) = \overline{(a'',b'')}$, is a diameter, it is set to be of different colour than  $\overline{(x,x+n)}$ if and only if $(a',b') = (x,x+n)$, respectively $(a'',b'') = (x,x+n)$. In all other cases, if $\mu_{\D}(\overline{(a,b)})$, respectively $\mu^-_{\D}(\overline{(a,b)})$, is a diameter, it is set to be of the same colour as $\overline{(x,x+n)}$.
}
\end{itemize}
Note that in the case where $\D$ contains two diameters of different colour, $\nc \D \setminus \D$ does not contain any diameters. 
}
\end{itemize}

\end{definition}

\begin{remark}
For any non-crossing diagram $\D$ of Dynkin type $D_n$, the map $\mu_{\D}: \nc \D \to \nc \D$ is a bijection with inverse $\mu_{\D}^-$.
\end{remark}

\begin{example}
Figures \ref{fig:M1} to \ref{fig:M4} provide some examples of mutation of arcs in $\nc \D$. In the pictures, the non-crossing diagram $\D$ is distinguished by thick lines.
\end{example}

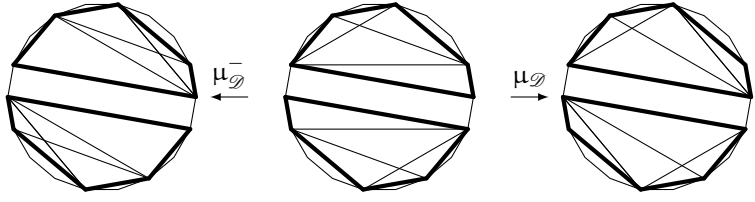
\begin{figure}
\centering{
\begin{tikzpicture}[scale=2.5,cap=round,>=latex, font = \footnotesize, font=\sansmath\sffamily
]
        
	\draw (20:0.5) -- (120:0.5);
	\draw (200:0.5) -- (300:0.5);

	\draw (80:0.5) -- (160:0.5);
	\draw (260:0.5) -- (340:0.5);

	\draw (20:0.5) -- (160:0.5);
	\draw (200:0.5) -- (340:0.5);
	
	\draw[ultra thick] (0:0.5) -- (20:0.5)--(80:0.5);
	\draw[ultra thick]  (80:0.5) -- (120:0.5);
	\draw[ultra thick]  (120:0.5) -- (160:0.5);
	\draw[ultra thick]  (160:0.5) -- (0:0.5);

	\draw[ultra thick]  (180:0.5) -- (200:0.5)--(260:0.5);
	\draw[ultra thick]  (260:0.5) -- (300:0.5);
	\draw[ultra thick]  (300:0.5) -- (340:0.5);
	\draw[ultra thick]  (340:0.5) -- (180:0.5);

        	\draw (0:0.5) -- (20:0.5) -- (40:0.5) -- (60:0.5) -- (80:0.5) -- (100:0.5) -- (120:0.5)  -- (140:0.5) -- (160:0.5) -- (180:0.5) -- (200:0.5) -- (220:0.5) -- (240:0.5) -- (260:0.5) -- (280:0.5) -- (300:0.5) -- (320:0.5) -- (340:0.5) -- (360:0.5) -- (380:0.5);

	\draw[->] (0.7,0) -- (0.9,0) node[midway, above] {$\mu_{\D}$};
	\draw[->] (-0.7,0) -- (-0.9,0) node[midway, above] {$\mu^-_{\D}$};
	
   	\begin{scope}[xshift=-42]

	\draw (0:0.5) -- (80:0.5);
	\draw (180:0.5) -- (260:0.5);

	\draw (0:0.5) -- (80:0.5);
	\draw (180:0.5) -- (260:0.5);	

	\draw (20:0.5) -- (120:0.5);
	\draw (200:0.5) -- (300:0.5);

	\draw (0:0.5) -- (120:0.5);
	\draw (180:0.5) -- (300:0.5);
 
	\draw[ultra thick]  (0:0.5) -- (20:0.5)--(80:0.5);
	\draw[ultra thick]  (80:0.5) -- (120:0.5);
	\draw[ultra thick]  (120:0.5) -- (160:0.5);
	\draw[ultra thick]  (160:0.5) -- (0:0.5);
	
	
	\draw[ultra thick]  (180:0.5) -- (200:0.5)--(260:0.5);
	\draw[ultra thick]  (260:0.5) -- (300:0.5);
	\draw[ultra thick]  (300:0.5) -- (340:0.5);
	\draw[ultra thick]  (340:0.5) -- (180:0.5);

        	\draw (0:0.5) -- (20:0.5) -- (40:0.5) -- (60:0.5) -- (80:0.5) -- (100:0.5) -- (120:0.5)  -- (140:0.5) -- (160:0.5) -- (180:0.5) -- (200:0.5) -- (220:0.5) -- (240:0.5) -- (260:0.5) -- (280:0.5) -- (300:0.5) -- (320:0.5) -- (340:0.5) -- (360:0.5) -- (380:0.5);

     	\end{scope}
	
	\begin{scope}[xshift=42]

	\draw (0:0.5) -- (80:0.5);
	\draw (180:0.5) -- (260:0.5);

	\draw (0:0.5) -- (80:0.5);
	\draw (180:0.5) -- (260:0.5);	

	\draw (80:0.5) -- (160:0.5);
	\draw (260:0.5) -- (340:0.5);

	\draw (0:0.5) -- (120:0.5);
	\draw (180:0.5) -- (300:0.5);
 
	\draw[ultra thick]  (0:0.5) -- (20:0.5)--(80:0.5);
	\draw[ultra thick]  (80:0.5) -- (120:0.5);
	\draw[ultra thick]  (120:0.5) -- (160:0.5);
	\draw[ultra thick]  (160:0.5) -- (0:0.5);
	
	
	\draw[ultra thick]  (180:0.5) -- (200:0.5)--(260:0.5);
	\draw[ultra thick]  (260:0.5) -- (300:0.5);
	\draw[ultra thick]  (300:0.5) -- (340:0.5);
	\draw[ultra thick]  (340:0.5) -- (180:0.5);

        	\draw (0:0.5) -- (20:0.5) -- (40:0.5) -- (60:0.5) -- (80:0.5) -- (100:0.5) -- (120:0.5)  -- (140:0.5) -- (160:0.5) -- (180:0.5) -- (200:0.5) -- (220:0.5) -- (240:0.5) -- (260:0.5) -- (280:0.5) -- (300:0.5) -- (320:0.5) -- (340:0.5) -- (360:0.5) -- (380:0.5);

     	\end{scope}
    \end{tikzpicture}
}
\caption{Mutation with respect to $\D$ of some elements in $\nc \D$ contained in a non-central pair of $\D$-cells}
\label{fig:M1}
\end{figure}

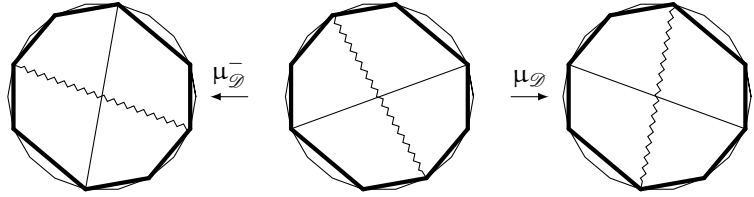
\begin{figure}
\centering{
\begin{tikzpicture}[scale=2.5,cap=round,>=latex, font = \footnotesize, font=\sansmath\sffamily
]
        
	\draw(20:0.5) -- (200:0.5);
	\draw[decorate, decoration={
    zigzag,
    segment length=4,
    amplitude=.9,post=lineto,
    post length=2pt
}] (120:0.5) -- (300:0.5);
	\draw[ultra thick] (20:0.5)--(80:0.5);
	\draw[ultra thick]  (80:0.5) -- (120:0.5);
	\draw[ultra thick]  (120:0.5) -- (160:0.5);

	\draw[ultra thick] (20:0.5)--(340:0.5);
	\draw[ultra thick] (160:0.5)--(200:0.5);

	\draw[ultra thick]  (200:0.5)--(260:0.5);
	\draw[ultra thick]  (260:0.5) -- (300:0.5);
	\draw[ultra thick]  (300:0.5) -- (340:0.5);

        	\draw (0:0.5) -- (20:0.5) -- (40:0.5) -- (60:0.5) -- (80:0.5) -- (100:0.5) -- (120:0.5)  -- (140:0.5) -- (160:0.5) -- (180:0.5) -- (200:0.5) -- (220:0.5) -- (240:0.5) -- (260:0.5) -- (280:0.5) -- (300:0.5) -- (320:0.5) -- (340:0.5) -- (360:0.5) -- (380:0.5);

	\draw[->] (0.7,0) -- (0.9,0) node[midway, above] {$\mu_{\D}$};
	\draw[->] (-0.7,0) -- (-0.9,0) node[midway, above] {$\mu^-_{\D}$};
	
   	\begin{scope}[xshift=-42]
 
	\draw[ultra thick] (20:0.5)--(80:0.5);
	\draw[ultra thick]  (80:0.5) -- (120:0.5);
	\draw[ultra thick]  (120:0.5) -- (160:0.5);
	\draw[ultra thick] (20:0.5)--(340:0.5);
	\draw[ultra thick] (160:0.5)--(200:0.5);

	\draw[decorate, decoration={
    zigzag,
    segment length=4,
    amplitude=.9,post=lineto,
    post length=2pt
}](340:0.5) -- (160:0.5);
	\draw (80:0.5) -- (260:0.5);
	
	
	\draw[ultra thick] (200:0.5)--(260:0.5);
	\draw[ultra thick]  (260:0.5) -- (300:0.5);
	\draw[ultra thick]  (300:0.5) -- (340:0.5);

        	\draw (0:0.5) -- (20:0.5) -- (40:0.5) -- (60:0.5) -- (80:0.5) -- (100:0.5) -- (120:0.5)  -- (140:0.5) -- (160:0.5) -- (180:0.5) -- (200:0.5) -- (220:0.5) -- (240:0.5) -- (260:0.5) -- (280:0.5) -- (300:0.5) -- (320:0.5) -- (340:0.5) -- (360:0.5) -- (380:0.5);

     	\end{scope}
	
	\begin{scope}[xshift=42]

	\draw[ultra thick]  (20:0.5)--(80:0.5);
	\draw[ultra thick]  (80:0.5) -- (120:0.5);
	\draw[ultra thick]  (120:0.5) -- (160:0.5);
	\draw[ultra thick] (20:0.5)--(340:0.5);
	\draw[ultra thick] (160:0.5)--(200:0.5);

	\draw[decorate, decoration={
    zigzag,
    segment length=4,
    amplitude=.9,post=lineto,
    post length=2pt
}](80:0.5) -- (260:0.5);
	\draw (160:0.5) -- (340:0.5);
	
	
	\draw[ultra thick]  (200:0.5)--(260:0.5);
	\draw[ultra thick]  (260:0.5) -- (300:0.5);
	\draw[ultra thick]  (300:0.5) -- (340:0.5);

        	\draw (0:0.5) -- (20:0.5) -- (40:0.5) -- (60:0.5) -- (80:0.5) -- (100:0.5) -- (120:0.5)  -- (140:0.5) -- (160:0.5) -- (180:0.5) -- (200:0.5) -- (220:0.5) -- (240:0.5) -- (260:0.5) -- (280:0.5) -- (300:0.5) -- (320:0.5) -- (340:0.5) -- (360:0.5) -- (380:0.5);

     	\end{scope}
    \end{tikzpicture}
}
\caption{Mutation of some elements in $\nc \D$ with respect to $\D$. If $\D$ contains no diameters, only diameters get mutated to diameters and mutation changes their colour}
\label{fig:M2}
\end{figure}

\begin{figure}
\centering{
\begin{tikzpicture}[scale=2.5,cap=round,>=latex, font = \footnotesize, font=\sansmath\sffamily
]

	
	\fill (0,0) circle (0.5pt);

	\draw[ultra thick] (60:0.5) -- (140:0.5);
	\draw[ultra thick] (200:0.5) -- (240:0.5);
	\draw[ultra thick] (160+180:0.5) -- (20:0.5);
	\draw[ultra thick] (200:0.5) -- (160:0.5);
	\draw[ultra thick] (20:0.5) -- (200:0.5);
	\draw[ultra thick] (160+180:0.5) -- (160:0.5);
	\draw (140:0.5) -- (20:0.5); 
	\draw (140+180:0.5) -- (20+180:0.5); 
	\draw[ultra thick] (140:0.5) -- (160:0.5);
	\draw[ultra thick] (140+180:0.5) -- (160+180:0.5);
	
	\draw[->] (0.7,0) -- (0.9,0) node[midway, above] {$\mu_{\D}$};
	\draw[->] (-0.7,0) -- (-0.9,0) node[midway, above] {$\mu^-_{\D}$};


	\draw[ultra thick] (20:0.5) -- (60:0.5);
	\draw[ultra thick] (60+180:0.5) -- (140+180:0.5);

        	\draw (0:0.5) -- (20:0.5) -- (40:0.5) -- (60:0.5) -- (80:0.5) -- (100:0.5) -- (120:0.5)  -- (140:0.5) -- (160:0.5) -- (180:0.5) -- (200:0.5) -- (220:0.5) -- (240:0.5) -- (260:0.5) -- (280:0.5) -- (300:0.5) -- (320:0.5) -- (340:0.5) -- (360:0.5) -- (380:0.5);

	\begin{scope}[xshift = 42]
	
	\fill (0,0) circle (0.5pt);

	\draw[ultra thick] (60:0.5) -- (140:0.5);
	\draw[ultra thick] (200:0.5) -- (240:0.5);
	\draw[ultra thick] (160+180:0.5) -- (20:0.5);
	\draw[ultra thick] (200:0.5) -- (160:0.5);
	\draw[ultra thick] (20:0.5) -- (200:0.5);
	\draw[ultra thick] (160+180:0.5) -- (160:0.5);
	\draw (60:0.5) -- (160:0.5); 
	\draw (240:0.5) -- (340:0.5); 
	\draw[ultra thick] (140:0.5) -- (160:0.5);
	\draw[ultra thick] (140+180:0.5) -- (160+180:0.5);


	\draw[ultra thick] (20:0.5) -- (60:0.5);
	\draw[ultra thick] (60+180:0.5) -- (140+180:0.5);

        	\draw (0:0.5) -- (20:0.5) -- (40:0.5) -- (60:0.5) -- (80:0.5) -- (100:0.5) -- (120:0.5)  -- (140:0.5) -- (160:0.5) -- (180:0.5) -- (200:0.5) -- (220:0.5) -- (240:0.5) -- (260:0.5) -- (280:0.5) -- (300:0.5) -- (320:0.5) -- (340:0.5) -- (360:0.5) -- (380:0.5);

	\end{scope}
\begin{scope}[xshift = -42]
	
	\fill (0,0) circle (0.5pt);

	\draw[ultra thick] (60:0.5) -- (140:0.5);
	\draw[ultra thick] (200:0.5) -- (240:0.5);
	\draw[ultra thick] (160+180:0.5) -- (20:0.5);
	\draw[ultra thick] (200:0.5) -- (160:0.5);
	\draw[ultra thick] (20:0.5) -- (200:0.5);
	\draw[ultra thick] (160+180:0.5) -- (160:0.5);
	\draw (60:0.5) -- (240:0.5); 
	\draw[ultra thick] (140:0.5) -- (160:0.5);
	\draw[ultra thick] (140+180:0.5) -- (160+180:0.5);

	
	\draw[ultra thick] (20:0.5) -- (60:0.5);
	\draw[ultra thick] (60+180:0.5) -- (140+180:0.5);

        	\draw (0:0.5) -- (20:0.5) -- (40:0.5) -- (60:0.5) -- (80:0.5) -- (100:0.5) -- (120:0.5)  -- (140:0.5) -- (160:0.5) -- (180:0.5) -- (200:0.5) -- (220:0.5) -- (240:0.5) -- (260:0.5) -- (280:0.5) -- (300:0.5) -- (320:0.5) -- (340:0.5) -- (360:0.5) -- (380:0.5);

	\end{scope}
       \end{tikzpicture}
\\

\begin{tikzpicture}[scale=2.5,cap=round,>=latex, font=\sansmath\sffamily
]

	
	\fill (0,0) circle (0.5pt);

	\draw[ultra thick] (60:0.5) -- (140:0.5);
	\draw[ultra thick] (200:0.5) -- (240:0.5);
	\draw[ultra thick] (160+180:0.5) -- (20:0.5);
	\draw[ultra thick] (200:0.5) -- (160:0.5);
	\draw[ultra thick] (20:0.5) -- (200:0.5);
	\draw[ultra thick] (160+180:0.5) -- (160:0.5);
	\draw (140:0.5) -- (320:0.5); 
	\draw[ultra thick] (140:0.5) -- (160:0.5);
	\draw[ultra thick] (140+180:0.5) -- (160+180:0.5);
	
	\draw[->] (0.7,0) -- (0.9,0) node[midway, above] {$\mu_{\D}$};
	\draw[->] (-0.7,0) -- (-0.9,0) node[midway, above] {$\mu^-_{\D}$};


	\draw[ultra thick] (20:0.5) -- (60:0.5);
	\draw[ultra thick] (60+180:0.5) -- (140+180:0.5);

        	\draw (0:0.5) -- (20:0.5) -- (40:0.5) -- (60:0.5) -- (80:0.5) -- (100:0.5) -- (120:0.5)  -- (140:0.5) -- (160:0.5) -- (180:0.5) -- (200:0.5) -- (220:0.5) -- (240:0.5) -- (260:0.5) -- (280:0.5) -- (300:0.5) -- (320:0.5) -- (340:0.5) -- (360:0.5) -- (380:0.5);

	\begin{scope}[xshift = 42]
	
	\fill (0,0) circle (0.5pt);

	\draw[ultra thick] (60:0.5) -- (140:0.5);
	\draw[ultra thick] (200:0.5) -- (240:0.5);
	\draw[ultra thick] (160+180:0.5) -- (20:0.5);
	\draw[ultra thick] (200:0.5) -- (160:0.5);
	\draw[ultra thick] (20:0.5) -- (200:0.5);
	\draw[ultra thick] (160+180:0.5) -- (160:0.5);
	\draw (20:0.5) -- (160:0.5); 
	\draw (200:0.5) -- (340:0.5); 
	\draw[ultra thick] (140:0.5) -- (160:0.5);
	\draw[ultra thick] (140+180:0.5) -- (160+180:0.5);


	\draw[ultra thick] (20:0.5) -- (60:0.5);
	\draw[ultra thick] (60+180:0.5) -- (140+180:0.5);

        	\draw (0:0.5) -- (20:0.5) -- (40:0.5) -- (60:0.5) -- (80:0.5) -- (100:0.5) -- (120:0.5)  -- (140:0.5) -- (160:0.5) -- (180:0.5) -- (200:0.5) -- (220:0.5) -- (240:0.5) -- (260:0.5) -- (280:0.5) -- (300:0.5) -- (320:0.5) -- (340:0.5) -- (360:0.5) -- (380:0.5);

	\end{scope}
\begin{scope}[xshift = -42]
	
	\fill (0,0) circle (0.5pt);

	\draw[ultra thick] (60:0.5) -- (140:0.5);
	\draw[ultra thick] (200:0.5) -- (240:0.5);
	\draw[ultra thick] (160+180:0.5) -- (20:0.5);
	\draw[ultra thick] (200:0.5) -- (160:0.5);
	\draw[ultra thick] (20:0.5) -- (200:0.5);
	\draw[ultra thick] (160+180:0.5) -- (160:0.5);
	\draw (60:0.5) -- (160:0.5); 
	\draw (240:0.5) -- (340:0.5); 
	\draw[ultra thick] (140:0.5) -- (160:0.5);
	\draw[ultra thick] (140+180:0.5) -- (160+180:0.5);

	
	\draw[ultra thick] (20:0.5) -- (60:0.5);
	\draw[ultra thick] (60+180:0.5) -- (140+180:0.5);

        	\draw (0:0.5) -- (20:0.5) -- (40:0.5) -- (60:0.5) -- (80:0.5) -- (100:0.5) -- (120:0.5)  -- (140:0.5) -- (160:0.5) -- (180:0.5) -- (200:0.5) -- (220:0.5) -- (240:0.5) -- (260:0.5) -- (280:0.5) -- (300:0.5) -- (320:0.5) -- (340:0.5) -- (360:0.5) -- (380:0.5);

	\end{scope}
       \end{tikzpicture}
}
\caption{Mutation of some elements in $\nc \D$ with respect to $\D$. If $\D$ contains diameters, pairs of arcs might get glued together to diameters and diameters might get split up into pairs of arcs when mutating}
\label{fig:M3}
\end{figure}
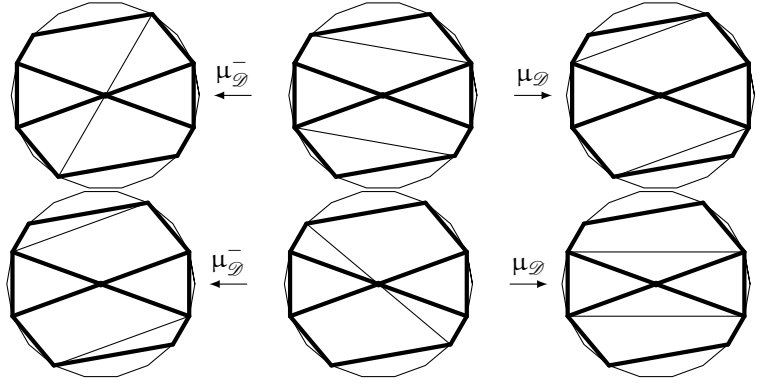

\begin{figure}
\centering{
\begin{tikzpicture}[scale=2.5,cap=round,>=latex, font = \footnotesize, font=\sansmath\sffamily
]
        
\begin{scope}[xshift = 42]
	
	\fill (0,0) circle (0.5pt);

	\draw[ultra thick] (60:0.5) -- (140:0.5);
	\draw[ultra thick] (200:0.5) -- (240:0.5);
	\draw[ultra thick] (160+180:0.5) -- (20:0.5);
	\draw[ultra thick] (200:0.5) -- (160:0.5);
	\draw (20:0.5) -- (200:0.5);
	\draw[ultra thick] (160+180:0.5) -- (160:0.5);
	\draw (60:0.5) -- (60+180:0.5); 
	\draw[ultra thick] (140:0.5) -- (160:0.5);
	\draw[ultra thick] (140+180:0.5) -- (160+180:0.5);
	
	\draw[ultra thick] (20:0.5) -- (60:0.5);
	\draw[ultra thick] (60+180:0.5) -- (140+180:0.5);

        	\draw (0:0.5) -- (20:0.5) -- (40:0.5) -- (60:0.5) -- (80:0.5) -- (100:0.5) -- (120:0.5)  -- (140:0.5) -- (160:0.5) -- (180:0.5) -- (200:0.5) -- (220:0.5) -- (240:0.5) -- (260:0.5) -- (280:0.5) -- (300:0.5) -- (320:0.5) -- (340:0.5) -- (360:0.5) -- (380:0.5);

\end{scope}

\begin{scope}[xshift = -42]
	
	\fill (0,0) circle (0.5pt);

	\draw[ultra thick] (60:0.5) -- (140:0.5);
	\draw[ultra thick] (200:0.5) -- (240:0.5);
	\draw[ultra thick] (160+180:0.5) -- (20:0.5);
	\draw[ultra thick] (200:0.5) -- (160:0.5);
	\draw (140:0.5) -- (140+180:0.5);
	\draw[ultra thick] (160+180:0.5) -- (160:0.5);
	\draw (140:0.5) -- (160+180:0.5); 
	\draw (140+180:0.5) -- (160:0.5); 
	\draw[ultra thick] (140:0.5) -- (160:0.5);
	\draw[ultra thick] (140+180:0.5) -- (160+180:0.5);

	\draw[ultra thick] (20:0.5) -- (60:0.5);
	\draw[ultra thick] (60+180:0.5) -- (140+180:0.5);

        	\draw (0:0.5) -- (20:0.5) -- (40:0.5) -- (60:0.5) -- (80:0.5) -- (100:0.5) -- (120:0.5)  -- (140:0.5) -- (160:0.5) -- (180:0.5) -- (200:0.5) -- (220:0.5) -- (240:0.5) -- (260:0.5) -- (280:0.5) -- (300:0.5) -- (320:0.5) -- (340:0.5) -- (360:0.5) -- (380:0.5);

\end{scope}

	\fill (0,0) circle (0.5pt);

	\draw[ultra thick] (60:0.5) -- (140:0.5);
	\draw[ultra thick] (200:0.5) -- (240:0.5);
	\draw[ultra thick] (160+180:0.5) -- (20:0.5);
	\draw[ultra thick] (200:0.5) -- (160:0.5);
	\draw[decorate, decoration={
    zigzag,
    segment length=4,
    amplitude=.9,post=lineto,
    post length=2pt
}] (155:0.5) -- (345:0.5);
	\draw[ultra thick] (160+180:0.5) -- (160:0.5);
	\draw (20:0.5) -- (160:0.5);  
	\draw (200:0.5) -- (340:0.5);  
	\draw[ultra thick] (140:0.5) -- (160:0.5);
	\draw[ultra thick] (140+180:0.5) -- (160+180:0.5);
	
	\draw[->] (0.7,0) -- (0.9,0) node[midway, above] {$\mu_{\D}$};
	\draw[->] (-0.7,0) -- (-0.9,0) node[midway, above] {$\mu^-_{\D}$};

	\draw[ultra thick] (20:0.5) -- (60:0.5);
	\draw[ultra thick] (60+180:0.5) -- (140+180:0.5);

        	\draw (0:0.5) -- (20:0.5) -- (40:0.5) -- (60:0.5) -- (80:0.5) -- (100:0.5) -- (120:0.5)  -- (140:0.5) -- (160:0.5) -- (180:0.5) -- (200:0.5) -- (220:0.5) -- (240:0.5) -- (260:0.5) -- (280:0.5) -- (300:0.5) -- (320:0.5) -- (340:0.5) -- (360:0.5) -- (380:0.5);

	\end{tikzpicture}
}
\caption{Mutation of some elements in $\nc \D$ with respect to $\D$. In this example, $\D$ contains exactly one diameter}
\label{fig:M4}
\end{figure}
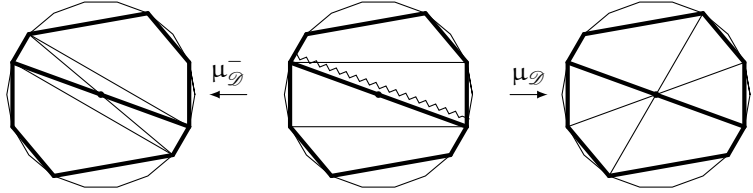

The following definition mirrors the definition of $D$-mutation pairs in triangulated categories by Iyama and Yoshino in \cite{IY}, cf Definition \ref{D:mutation}.

\begin{definition}[$\D$-mutation pair]
We call a pair of diagrams $(\X,\X')$ of Dynkin type $D_n$ with $\X, \X' \subset \nc \D$ a {\em $\D$-mutation pair} if $\D \subset \X' \subset \mu^-_{\D}(\X)$ and $\D \subset \X \subset \mu_{\D}(\X')$.
\end{definition} 

\begin{remark}
Since $\mu_{\D}$ is a bijection on $\nc \D$ with inverse $\mu_{\D}^-$, for any $\D$-mutation pair $(\X,\X')$ we have $\X = \mu_{\D}(\X')$ and $\X' = \mu^-_{\D}(\X)$.
\end{remark}

\begin{lemma}\label{mutation changes colour}
Mutation changes the colour of diameters. I.e.\;let $\overline{(a,b)}$ be a diameter in $\nc \D \setminus \D$ such that $\mu^-_{\D}(\overline{(a,b)})$ (respectively $\mu_{\D}(\overline{(a,b)})$) is also a diameter. Then $\mu^-_{\D}(\overline{(a,b)})$ (respectively $\mu_{\D}(\overline{(a,b)})$ is of different colour than $\overline{(a,b)}$.
\end{lemma}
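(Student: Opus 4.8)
The plan is to run through the possible configurations of the diameters contained in $\D$, which are precisely the cases distinguished in the colour part of Definition \ref{combinatorial mutation}: either $\D$ contains no diameter; or $\D$ contains two diameters of different colour; or $\D$ contains at least two diameters, necessarily all of one colour $\gamma$; or $\D$ contains exactly one diameter $\overline{(x,x+n)}_\gamma$. For each configuration I would identify which diameters of $\nc\D\setminus\D$ can be mutated to a diameter and then read off the colour of the image from the definition. It suffices to argue for $\mu^-_{\D}$, the case of $\mu_{\D}$ being symmetric. The guiding geometric principle is that, after applying $r_{\D}$, a diameter or pair of arcs becomes a diameter exactly when it meets the centre $c$: a pair of radii ends at $c$ and a genuine diameter passes through $c$, whereas a pair of arcs avoids $c$ entirely.

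I would first dispose of the two configurations for which the statement holds vacuously. If $\D$ contains two diameters of different colour then, since $\D$ is non-crossing, they sit at the same position, and the remark following Definition \ref{combinatorial mutation} already records that $\nc\D\setminus\D$ contains no diameters. If $\D$ contains at least two diameters, all of colour $\gamma$, then a diameter of $\nc\D\setminus\D$ must itself be of colour $\gamma$, because a diameter of the opposite colour crosses every $\gamma$-diameter of $\D$ placed at a different position, and at least one such exists. Under $r_{\D}$ such a $\gamma$-diameter $\overline{(a,b)}$ becomes a pair of radii, so $r_{\D}(\overline{(a,b)}) = \overline{(d_i,d_j)}$ with $d_j = c$ inside its pair of $\D$-cells. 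The decisive observation is that with at least two $\gamma$-diameters the centre $c$ is incident with at least four radii, no two consecutive of which are antipodal; hence the interior angle of every $\D$-cell at $c$ is strictly less than $\pi$, so no chord between two vertices distinct from $c$ passes through $c$. Since $\mu^-_{\D}$ sends $\overline{(d_i,c)}$ to $\overline{(d_{i-1},d_{j-1})}$, whose two endpoints both avoid $c$, the image meets $c$ neither as an endpoint nor in its interior and is therefore not a diameter; the hypothesis is vacuous.

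The main case, and the chief obstacle, is a single diameter $\overline{(x,x+n)}_\gamma$ in $\D$. Here $r_{\D}$ turns it into a pair of radii, so $c$ becomes a vertex whose two incident radii $(x,c)$ and $(x+n,c)$ are collinear; consequently the interior angle at $c$ is exactly $\pi$ in each of the two central cells, and the underlying diameter through $x$ is realised as a chord through the flat vertex $c$. A diameter of $\nc\D\setminus\D$ is then of exactly one of two kinds. The first is the opposite-colour copy $\overline{(x,x+n)}_{\bar\gamma}$, which $r_{\D}$ keeps as a genuine diameter and which, inside the central pair of cells $\overline{\langle d_1,\ldots,d_k\rangle}$ with $c=d_k$ and $\{x,x+n\}=\{d_1,d_{k-1}\}$, is the chord $\overline{(d_1,d_{k-1})}$ running through $c$. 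Applying $\mu^-_{\D}$ rotates it to $\overline{(d_k,d_{k-2})}=\overline{(c,d_{k-2})}$, a pair of radii at a position $d_{k-2}\neq x$; the third colour rule of Definition \ref{combinatorial mutation} colours the image $\gamma$, so the colour changes. The second kind is a $\gamma$-diameter at a position $y\neq x$, which $r_{\D}$ sends to a pair of radii $\overline{(d_i,c)}$; mutation rotates this to $\overline{(d_{i-1},d_{k-1})}$, and this is a diameter only when the rotated chord is the one through the flat vertex $c$, which forces its position to be exactly $x$. The third colour rule then colours the image $\bar\gamma$, so again the colour changes.

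It remains to treat the configuration in which $\D$ contains no diameter. By Lemma \ref{diameters} every diameter of $\nc\D\setminus\D$ lies in the unique central $\pi$-rotation invariant $\D$-cell, in which mutation sends diameters to diameters, and the first colour rule of Definition \ref{combinatorial mutation} changes their colour by fiat. The principal difficulty of the whole argument is the bookkeeping in the one-diameter case: one has to keep track of which colour $r_{\D}$ preserves as a genuine diameter and which it converts into radii, and then compare the position of the mutated diameter with $x$ in order to select the correct branch of the third colour rule. I expect the observation that $c$ is a flat vertex precisely when $\D$ has a single diameter to be the key that makes this position bookkeeping come out correctly.
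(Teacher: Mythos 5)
Your proof is correct, and although it follows the same case decomposition as the paper (the four configurations of diameters in $\D$ dictated by the colour rules of Definition \ref{combinatorial mutation}, with the no-diameter and two-colour cases handled identically), the mechanism in the two substantive cases is genuinely different. The paper rests on one combinatorial note: the endpoints of $r_{\D}(\overline{(a,b)})$ and of $r_{\D}(\mu_{\D}(\overline{(a,b)}))$ are four pairwise distinct vertices, so at most one of the two elements can involve the central vertex $c$, hence a pair of radii never mutates to a pair of radii. Combined with the observations that, when $\D$ has several diameters (all of one colour), every diameter of $\nc \D \setminus \D$ becomes a pair of radii (so the case is vacuous), and that, when $\D$ has exactly one diameter, $r_{\D}(\nc \D \setminus \D)$ contains exactly one genuine diameter whose colour is opposite to that of all the radii there, the colour change follows without locating anything inside a cell. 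You instead argue through the local geometry at $c$: with at least two diameters in $\D$ the vertex $c$ is extreme in every cell (interior angle strictly less than $\pi$, since no two cyclically consecutive radii are antipodal), so no diagonal between two non-central vertices can pass through the centre, giving vacuity; with exactly one diameter $c$ is a flat vertex (angle exactly $\pi$), and you explicitly determine which chords of the central cells pass through it, then read the colours off the third rule of Definition \ref{combinatorial mutation}. Both arguments are sound — your remark that $d_{i\pm 1}, d_{j\pm 1} \neq c$ is exactly the paper's distinctness note in disguise. The paper's route buys brevity, needing nothing beyond Lemma \ref{divides}; yours buys explicitness, since it identifies precisely which diameters can mutate to diameters in the one-diameter case (the opposite-colour copy $\overline{(x,x+n)}_{\bar{\gamma}}$, and the unique $\gamma$-diameter whose rotated chord lands on position $x$) and checks the colour convention on each, making visible why the definition's colour rule is the right one. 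The only step you leave implicit — that in the one-diameter case these two kinds exhaust the diameters of $\nc \D \setminus \D$ — follows from the same crossing argument you already gave in the several-diameter case (a diameter $\overline{(y,y+n)}_{\bar{\gamma}}$ with $y \neq x$ crosses $\overline{(x,x+n)}_{\gamma} \in \D$), so it is a one-line omission rather than a genuine gap.
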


\begin{proof}
If $\D$ contains no diameters this follows directly from Definition \ref{combinatorial mutation}. If $\D$ contains two diameters of different colour, then $\nc \D \setminus \D$ contains no diameters, so the statement is trivial. Thus we only have to consider the case where $\D$ contains at least one diameter and all its diameters are of the same colour.

We first note that if $r_{\D}(\overline{(a,b)})$ is a pair of radii then $r_{\D}(\mu_{\D}(\overline{(a,b)}))$ is not: since the ending vertices of $r_{\D}(\overline{(a,b)})$ and $r_{\D}(\mu_{\D}(\overline{(a,b)}))$ are pairwise distinct, at most one of them can be the central vertex $c$. 

If $\D$ contains more than one diameter of the same colour, then every diameter in $\nc \D \setminus \D$ gets mapped to a pair of radii under the map $r_{\D}$. Thus, in this case, diameters do not get mutated to diameters.

It remains to check the case where $\D$ contains exactly one diameter. Assume that both $\overline{(a,b)}$ in $\nc \D \setminus \D$ and $\mu_{\D}(\overline{(a,b)})$, respectively $\mu^-_{\D}(\overline{(a,b)})$, are diameters. Since $r_{\D}(\overline{(a,b)})$ and $r_{\D}(\mu_{\D}(\overline{(a,b)}))$, respectively $r_{\D}(\mu^-_{\D}(\overline{(a,b)}))$, cannot both be pairs of radii, precisely one of them has to be a diameter. However, there is only one diameter in $r_{\D}(\nc \D \setminus \D)$ and it is of different colour to all the pairs of radii in $r_{\D}(\nc \D \setminus \D)$. Therefore mutation changes colour.
\end{proof}

\begin{remark}\label{diameters to arcs}
If a diameter $\overline{(a,b)}$ gets mutated to a pair of arcs $\mu_{\D}(\overline{(a,b)})$ or $\mu^-_{\D}(\overline{(a,b)})$, then $r_{\D}(\overline{(a,b)}) = \overline{(d_i,d_j)}$ is a pair of radii. The way we may think about this is that only pairs of radii may get split up into pairs of arcs and diameters have to stay "whole" (at least for one mutation step). Indeed, if $\mu^{(-)}_{\D}(\overline{(a,b)})=r^{-1}_{\D}\overline{(d_{i\pm 1},d_{j \pm 1})}$ is a pair of arcs, then the four vertices $d_{i\pm 1},d_{j \pm 1}, (d_{i\pm 1}+n)$ and $(d_{j \pm 1}+n)$ are pairwise distinct. By rotation invariance of $\D$, the element $\overline{(d_i,d_j)}$ cannot be a diameter.
\end{remark}

\section{A combinatorial model for mutation of torsion pairs in the cluster category of Dynkin type $D_n$}
\label{A combinatorial model for mutation}

Our goal is to give a combinatorial interpretation for mutation of torsion pairs in the cluster category $\CC_{D_n}$ by defining mutation of Ptolemy diagrams of Dynkin type $D_n$. Since $\CC_{D_n}$ is $2$-Calabi-Yau and contains only finitely many indecomposable objects (up to isomorphism), any subcategory is functorially finite and satisfies $\tau D = \Sigma D$. According to Zhou and Zhu's result (cf.\;Theorem \ref{Zhou-Zhu}), mutation of a torsion pair $(X,Y)$ in $\CC_{D_n}$ is thus defined with respect to every subcategory $D \subset X \cap (\Sigma^{-1} X)^\perp$. If $X$ corresponds to the diagram $\X$ of Dynkin type $D_n$, the subcategory $X \cap (\Sigma^{-1} X)^\perp$ corresponds to the diagram $\X \cap \nc \X$ of those arcs in $\X$ that do not cross any other arcs in $\X$. In analogy with mutation of torsion pairs on the categorical level, we want to define mutation of the Ptolemy diagram $\X$ of Dynkin type $D_n$ with respect to subdiagrams of $\X \cap \nc \X$. In particular, any subdiagram of $\X \cap \nc \X$ is a non-crossing diagram of Dynkin type $D_n$.

\begin{definition}[mutation of Ptolemy diagrams of Dynkin type $D_n$]\label{diagram mutation}
Let $\X$ be a Ptolemy diagram of Dynkin type $D_n$ with a subdiagram $\D \subset \X \cap \nc \X$. We define the {\em $\D$-mutations of $\X$} to be the diagrams
\begin{eqnarray*}
\mu_{\D}(\X) &=& \{\mu_{\D}(\overline{(a,b)})|\overline{(a,b)} \in \X\} \text{ and }\\
\mu^-_{\D}(\X) &=& \{\mu^-_{\D}(\overline{(a,b)})|\overline{(a,b)} \in \X\}.
\end{eqnarray*}
\end{definition}

Mutation of Ptolemy diagrams of Dynkin type $D_n$ as described in Definition \ref{diagram mutation} provides a combinatorial model for mutation of torsion pairs in the cluster category of Dynkin type $D_n$. For the proof of this fact, which is stated more generally in Theorem \ref{mutations agree general}, we calculate extensions between certain objects in the cluster category $\CC_{D_n}$. We first introduce a few technicalities to make the calculations easier. 

\begin{lemma} \label{dimension of Ext}
Let $\D$ be a non-crossing diagram and let $\overline{(a,b)} \in \nc \D$. Then we have 
$$
\dim(\Ext_{\CC_{D_n}}^1(m_{\overline{(a,b)}},m_{\mu^-_{\D}(\overline{(a,b)})}) = 1.
$$
\end{lemma}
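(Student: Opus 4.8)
The plan is to translate the statement into a crossing count and then read it off the $\D$-cell geometry. By Lemma \ref{intersection number} the dimension $\dim\Ext^1_{\CC_{D_n}}(m_{\overline{(a,b)}},m_{\mu^-_{\D}(\overline{(a,b)})})$ equals the number of times $\overline{(a,b)}$ and $\mu^-_{\D}(\overline{(a,b)})$ cross, so it suffices to show that these two elements of $\A(\p_{2n})$ cross exactly once. Since $\mu^-_{\D}$ fixes every element of $\D$ and a non-crossing diagram is rigid (again by Lemma \ref{intersection number}), the assertion has content only for $\overline{(a,b)}\in\nc\D\setminus\D$, which I assume from now on.

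First I would localise the computation inside a single cell. By Lemma \ref{divides} the element $\overline{(a,b)}$ is contained in a unique pair of $\D$-cells $\overline{\langle d_1,\ldots,d_k\rangle}$, so that $r_{\D}(\overline{(a,b)})=\overline{(d_i,d_j)}$ with $j\notin\{i-1,i,i+1\}$, and by Definition \ref{combinatorial mutation} we have $\mu^-_{\D}(\overline{(a,b)})=r^{-1}_{\D}(\overline{(d_{i-1},d_{j-1})})$. Thus both $\overline{(a,b)}$ and its mutation live in the same pair of cells, and the whole problem reduces to comparing the chord $(d_i,d_j)$ with the chord $(d_{i-1},d_{j-1})$ obtained by moving each endpoint one step backwards along the cell. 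The key geometric fact is that inside a single $\D$-cell these two chords cross exactly once: the cell is convex by Lemma \ref{convex}, and the four vertices occur in cyclic order $d_{i-1},d_i,d_{j-1},d_j$, so the endpoints of the two chords interleave and the chords meet in precisely one interior point.

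Next I would account for the partner arc, since by Definition \ref{crossing arcs} the crossing number of the pair $\overline{(a,b)}$ with the pair $\mu^-_{\D}(\overline{(a,b)})$ is obtained by crossing the single arc $(d_i,d_j)$ against both $(d_{i-1},d_{j-1})$ and its $\pi$-rotation $(d_{i-1}+n,d_{j-1}+n)$. Suppose first that $\overline{(a,b)}$ is a pair of non-diameter arcs, so that $r_{\D}$ acts trivially and both chords are honest diagonals. The first comparison contributes the one crossing found above. For the second I would distinguish whether the pair of cells is central. If it is not central, then $\langle d_1,\ldots,d_k\rangle$ and its $\pi$-rotate are distinct cells with disjoint interiors, so a chord of one cannot meet a chord of the other and no second crossing occurs, giving crossing number $1$. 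If the pair of cells is central and $\pi$-invariant, then $(d_i,d_j)$ is not a diameter, so $j-i\neq k/2$, and a direct check using the symmetry $d_{t+k/2}=d_t+n$ shows that the rotated chord $(d_{i-1}+n,d_{j-1}+n)$ either shares an endpoint with $(d_i,d_j)$ or has both endpoints on the same side of it; either way it produces no further crossing, so again the crossing number is $1$.

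The step I expect to be the main obstacle is the bookkeeping forced by the coloured diameters and radii, i.e.\;the case where $\overline{(a,b)}$ is a diameter. When $r_{\D}$ replaces a diameter by a pair of radii the crossings must be read off the associated diameters according to the crossing conventions following Definition \ref{radii}, and one must ensure that the colouring rules of Definition \ref{combinatorial mutation} neither create a spurious crossing nor suppress the genuine one -- in particular that a diameter and its image are never declared non-crossing by the same-colour rule. Here Lemma \ref{mutation changes colour} is exactly what is needed: whenever a diameter is mutated to a diameter its colour flips, so the one geometric crossing is counted once. Carrying this out rigorously means running through the four cases of Definition \ref{combinatorial mutation} ($\D$ with no diameter, with exactly one diameter, with at least two diameters of the same colour, and with two diameters of different colour, the last forcing $\nc\D\setminus\D$ to contain no diameters) and verifying in each that precisely one crossing survives; this case analysis, rather than any single difficult idea, is the technical heart of the argument.
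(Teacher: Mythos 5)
Your skeleton coincides with the paper's: reduce to a crossing count via Lemma \ref{intersection number}, localise to the unique pair of $\D$-cells via Lemma \ref{divides} (your opening observation that one must restrict to $\overline{(a,b)}\in\nc\D\setminus\D$ is correct and is also implicit in the paper, whose proof invokes the containing pair of cells right away), record the single in-cell crossing of $(d_i,d_j)$ with $(d_{i-1},d_{j-1})$, and settle the diameter--diameter case by Lemma \ref{mutation changes colour}. Where you genuinely diverge is the key subcase of two pairs of arcs. The paper rules out a double crossing intrinsically: if $(d_i,d_j)$ also crossed the partner $(d_{i-1}+n,d_{j-1}+n)$, then either $d_{j-1}$ and $d_{i-1}+n$, or $d_{i-1}$ and $d_{j-1}+n$, would both lie between $d_i$ and $d_j$, forcing $\overline{(d_i,d_j)}$ to cross one of the pairs of sides $\overline{(d_{i-1},d_i)}$ or $\overline{(d_{j-1},d_j)}$ lying in $r_{\D}(\D)$, contradicting $\overline{(a,b)}\in\nc\D$. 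You argue extrinsically instead, through the position of the cells: disjoint interiors when the two cells of the pair are distinct, and an index computation with $d_{t+k/2}=d_t+n$ in the $\pi$-invariant central cell. Your computation in the invariant cell is correct (for a non-diameter chord, the rotated mutated chord shares an endpoint exactly when $j-i\equiv k/2\pm 1$ and otherwise has both endpoints strictly on one side), so this route works; the paper's argument buys uniformity -- no central/non-central split -- and uses only the hypothesis $\overline{(a,b)}\in\nc\D$ rather than global facts about how cells sit inside $\p_{2n}$.

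Three points to shore up. First, your dichotomy ``non-central'' versus ``central and $\pi$-invariant'' is not exhaustive: when $\D$ contains diameters, a central pair of cells consists of two \emph{distinct} cells meeting at the centre $c$ (along radii, or along a doubled diameter, cf.\ Figure \ref{fig:C2}), and this case falls into neither branch as written. The fix is to split into ``the two cells of the pair are distinct'' versus ``they coincide'', and note that your disjoint-interior argument applies verbatim in the former case. Second, the assertion that distinct $\D$-cells have disjoint interiors is used but never proved; it is true, and extractable from the angle-minimality condition and the uniqueness part of the proof of Lemma \ref{divides}, but it is precisely the kind of global statement the paper's local side-crossing argument avoids needing. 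Third, the mixed case of a diameter against a pair of arcs, which you defer to case-by-case bookkeeping over the four configurations of $\D$, is in fact immediate and needs no colour analysis: by the conventions of Definition \ref{crossing arcs} a coloured diameter and a pair of arcs cross at most once, so the single in-cell crossing already gives the count -- this is the paper's one-line disposal of that case. With these repairs your argument is complete and correct, differing from the paper's only in the mechanism excluding a second crossing between pairs of arcs.
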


\begin{proof}
By Lemma \ref{intersection number}, the dimension of the extension space of two indecomposable objects is equal to the number of times the corresponding pairs of arcs cross (cf. Definition \ref{crossing arcs}). Let $\overline{\langle d_1, \ldots, d_k\rangle}$ be the pair of $\D$-cells containing $\overline{(a,b)}$ and $\mu^-_{\D}(\overline{(a,b)})$ with $\overline{(a,b)} = r^{-1}_{\D}(\overline{(d_i,d_j)})$ and $\mu^-_{\D}(\overline{(a,b)}) = r^{-1}_{\D}(\overline{(d_{i-1},d_{j-1})})$. The vertices $d_i, d_{j-1}, d_j$ and $d_{i-1}$ appear in this order in an anti clockwise direction on the border of $\langle d_1, \ldots, d_k \rangle$. The two arcs $(d_i,d_j)$ and $(d_{i-1},d_{j-1})$ thus cross. We distinguish the following cases.
\begin{itemize}
\item{If both $\overline{(a,b)}$ and $\mu^-_{\D}(\overline{(a,b)})$ are diameters then by Lemma \ref{mutation changes colour} they are of different colour, so they cross once.}
\item{If one of $\overline{(a,b)}$ and $\mu^-_{\D}(\overline{(a,b)})$ is a diameter and the other one is a pair of arcs it follows directly from Definition \ref{crossing arcs} that they cross once.}
\item{Now consider the case where both $\overline{(a,b)}$ and $\mu^-_{\D}(\overline{(a,b)})$ are pairs of arcs. We show that $\overline{(d_i,d_j})$ and $\overline{(d_{i-1},d_{j-1})}$ cannot cross twice, i.e.\;we show that if the arc $(d_i,d_j)$ crosses $(d_{i-1},d_{j-1})$ it cannot cross its partner $(d_{i-1}+n,d_{j-1}+n)$. Assume, for a contradiction, that it does and without loss of generality assume $d_j < d_i+n$. 
\begin{figure}
\centering{
\begin{tikzpicture}[scale=2.5,cap=round,>=latex, font = \footnotesize, font=\sansmath\sffamily
]
        
	\node (d1) at (60:0.57cm){$d_i$};
	\node (d2) at (150:0.57cm){$d_j$};
	\node (dk) at (310:0.57cm){$d_{i-1}$};
	\node (dk+1) at (90:0.57cm){$d_{j-1}$};
	
	\draw (60:0.5) -- (150:0.5);
	\draw (240:0.5) -- (330:0.5);
	\draw (310:0.5) -- (90:0.5);
	\draw (130:0.5cm) -- (270:0.5cm);
	\draw[dashed] (310:0.5cm) -- (60:0.5cm);
	\draw[dashed] (240:0.5cm) -- (130:0.5cm);
	
        	\draw (0,0) circle(0.5cm);
\begin{scope}[xshift = 42]
	\node (d1) at (60:0.57cm){$d_i$};
	\node (d2) at (150:0.57cm){$d_j$};
	\node (dk) at (130:0.57cm){$d_{i-1}$};
	\node (dk+1) at (270:0.57cm){$d_{j-1}$};
	
	\draw (60:0.5) -- (150:0.5);
	\draw (240:0.5) -- (330:0.5);
	\draw (310:0.5) -- (90:0.5);
	\draw (130:0.5cm) -- (270:0.5cm);
	\draw[dashed] (270:0.5cm) -- (150:0.5cm);
	\draw[dashed] (450:0.5cm) -- (330:0.5cm);

        	\draw (0,0) circle(0.5cm);
  \end{scope}
  \end{tikzpicture}
}
\caption{The pairs of arcs $\overline{(d_i,d_j)}$ and $\overline{(d_{i-1},d_{j-1})}$ cannot cross twice}
\label{fig:NC}
\end{figure}
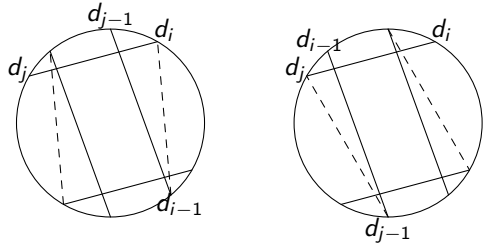
Then either both $d_{j-1}$ and $d_{i-1}+n$ or both $d_{i-1}$ and $d_{j-1}+n$  lie between $d_i$ and $d_j$ in a clockwise direction, cf.\;Figure \ref{fig:NC}. In the first case this would imply that $\overline{(d_i,d_{i-1})}$ and $\overline{(d_i,d_j)}$ cross and in the second case that $\overline{(d_j,d_{j-1})}$ and $\overline{(d_i,d_j)}$ cross. However, $\overline{(d_i,d_{i-1})}$ and $\overline{(d_j,d_{j-1})}$ both lie in $r_{\D}(\D)$ and $\overline{(d_i,d_j)}$ is a pair of arcs in $\X$, so this contradicts the fact that $\D \subset \nc \X$.
}
\end{itemize}
\end{proof}

We now want to find distinguished triangles of the form
$$
m_{\overline{(a,b)}} \to d \to m_{\mu^-_{\D}(\overline{(a,b)})} \to \Sigma m_{\overline{(a,b)}}
$$
in $\CC_{D_n}$. To do this, we use methods introduced by Buan, Marsh, Reineke, Reiten and Todorov in \cite{BMRRT}.
Recall that the cluster category $\CC_{D_n}$ is an orbit category of the bounded derived category $D^b(kD_n)$. By Proposition 1.6 in \cite{BMRRT} the objects in the cluster category $\CC_{D_n}$ are either induced by $kD_n$-modules or by shifts of projective modules. We restrict the coordinate system from Iyama's paper \cite{Iyama} on the Auslander-Reiten quiver of the derived category to the Auslander-Reiten quiver $\AR(\Mod kD_n)$ of the module category, cf. Figure \ref{fig:coordinate-system}.

\begin{centering}
\begin{figure}
\begin{tikzpicture} [scale=1.4, font = \footnotesize, font=\sansmath\sffamily
]

\node (a1) at (0,0) {$[0,2]$};
\node (a2) at (1,1) {$[0,3]$};
\node (af) at (3,3) {$\ldots$};
\node (a3) at (2,2) {$[0,4]$};
\node (a+) at (4,4) {$[0,n]_+$};
\node (a-) at (4,3) {$[0,n]_-$};

\draw[->] (a1) -- (a2);
\draw[->] (a2) -- (a3);
\draw[dotted] (a3) -- (af);
\draw[->] (af) -- (a+);
\draw[->] (af) -- (a-);

\node (b1) at (2,0) {$[1,3]$};
\node (b2) at (3,1) {$[1,4]$};
\node (bf) at (5,3) {$\ldots$};
\node (b3) at (4,2) {$[1,5]$};
\node (b+) at (6,4) {$[1,n+1]_+$};
\node (b-) at (6,3) {$[1,n+1]_-$};

\draw[->] (b1) -- (b2);
\draw[->] (b2) -- (b3);
\draw[dotted] (b3) -- (bf);
\draw[->] (bf) -- (b+);
\draw[->] (bf) -- (b-);

\node (c1) at (4,0) {};
\node (c2) at (5,1) {};
\node (cf) at (7,3) {};
\node (c3) at (6,2) {};
\node (c+) at (8,4) {};
\node (c-) at (8,3) {};

\node (e1) at (6,0)  {$[n-2,n]$};
\node (e2) at (7,1) {$[n-2,n+1]$};
\node (ef) at (9,3) {$\ldots$};
\node (e3) at (8,2) {$[n-2,n+2]$};
\node (e+) at (10,4) {$[n-2,2n-1]_+$};
\node (e-) at (10,3) {$[n-2,2n-1]_-$};

\draw[->] (e1) -- (e2);
\draw[->] (e2) -- (e3);
\draw[dotted] (e3) -- (ef);
\draw[->] (ef) -- (e+);
\draw[->] (ef) -- (e-);

\draw[->] (c1) -- (c2);
\draw[->] (c2) -- (c3);
\draw[dotted] (c3) -- (cf);
\draw[->] (cf) -- (c+);
\draw[->] (cf) -- (c-);

\draw[->] (a2) -- (b1);
\draw[dotted] (b2) -- (c1);
\draw[->] (a3) -- (b2);
\draw[dotted] (b3) -- (c2);
\draw[->] (a+) -- (bf);
\draw[->] (a-) -- (bf);
\draw[dotted] (b+) -- (cf);
\draw[dotted] (b-) -- (cf);

\draw[->] (af) -- (b3);
\draw[dotted] (bf) -- (c3);

\draw[->] (c2) -- (e1);
\draw[->] (c3) -- (e2);
\draw[->] (cf) -- (e3);

\draw[->] (c-) -- (ef);
\draw[->] (c+) -- (ef);

\end{tikzpicture}

\caption{Coordinate system on $\AR(\Mod kD_n)$}
\label{fig:coordinate-system}
\end{figure}
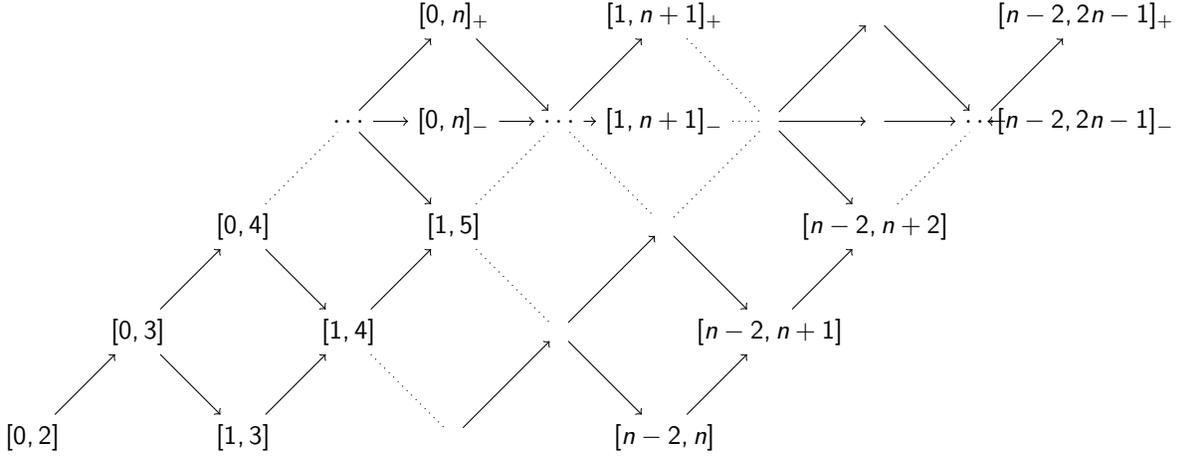
\end{centering}
We will denote a representative of the isomorphism class of indecomposable modules at the vertex $[i,j]$ or $[i,i+n]_\pm$ by $M_{[i,j]}$ or $M_{[i,i+n]_\pm}$. By abuse of notation we sometimes omit the signs and simply write $[i,i+n]$. The projective modules are those in the first slice of $\AR(\Mod kD_n)$, i.e.\;the modules of the form $M_{[0,j]}$ for $2 \leq j \leq n$. The $kD_n$-module $M_{[i,j]}$ induces the object $m_{b([i,j])}$ in $\CC_{D_n}$ associated to the element $b([i,j]) \in \A(\p_{2n})$, where $b$ is the bijection from Definition \ref{D:bijection}.

Let us first recall the notion of starting and ending frame in $\Mod kD_n$. We denote the set of vertices of $\AR(\Mod kD_n)$ by $\AR(kD_n)_0$.

\begin{definition}[\cite{BMRRT}, Definition 8.4]
Let $[i,j] \in \AR(kD_n)_0$ be a vertex of the Auslander-Reiten quiver of the module category $\Mod kD_n$. The {\em starting frame} $F_s([i,j])$ and {\em ending frame} $F_e([i,j])$) of the vertex $[i,j]$ are defined as follows:
$$
F_s([i,j]) = \Bigg\{ [k,l] \in \AR(kD_n)_0\Bigg| \begin{aligned}
\Hom_{kD_n}(M_{[i,j]}, M_{[k,l]}) \neq 0 \text{ and}\\
\Hom_{kD_n}(M_{[i,j]},\tau M_{[k,l]}) = 0
\end{aligned}
\Bigg\}
$$
$$
F_e([i,j]) = \Bigg\{[k,l] \in \AR(kD_n)_0\Bigg|\begin{aligned}
\Hom_{kD_n}(M_{[k,l]},M_{[i,j]}) \neq 0 \text{ and}\\
\Hom_{kD_n}(\tau^{-1}M_{[k,l]},M_{[i,j]}) = 0
\end{aligned}
\Bigg\}
$$
\end{definition}

\begin{lemma}[\cite{BMRRT}, Corollary 8.5]\label{intersect frames}
Let $M_{[i,j]}$ and $M_{[k,l]}$ be indecomposable objects in $\Mod kD_n$ such that $\Ext^1_{kD_n}(M_{[i,j]},M_{[k,l]})$ is one-dimensional. Then the (up to isomorphism) unique non-trivial extension of $M_{[i,j]}$ by $M_{[k,l]}$ is the direct sum of one copy of each indecomposable object corresponding to a vertex in the intersection $F_s([k,l]) \cap F_e([i,j])$:
$$
\Ext^1_{kD_n}(M_{[i,j]},M_{[k,l]}) \cong \bigoplus_{[a,b] \in F_s([k,l]) \cap F_e([i,j])} M_{[a,b]}.
$$
\end{lemma}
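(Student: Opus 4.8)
The statement is really a fact about the representation-finite hereditary algebra $kD_n$, so I would prove it by knitting in the Auslander-Reiten quiver $\AR(\Mod kD_n)$, whose shape is the known translation quiver coming from $\ZZ D_n$. Throughout let $D = \Hom_k(-,k)$ denote the usual duality.

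My first step is to convert the extension into a Hom computation. For the hereditary algebra $kD_n$ the Auslander-Reiten formula gives
$$\Ext^1_{kD_n}(M_{[i,j]}, M_{[k,l]}) \cong D\,\overline{\Hom}_{kD_n}(M_{[k,l]}, \tau M_{[i,j]}),$$
so the hypothesis says that $\overline{\Hom}(M_{[k,l]}, \tau M_{[i,j]})$ is one-dimensional, its generator $\phi\colon M_{[k,l]} \to \tau M_{[i,j]}$ is unique up to scalar, and there is an essentially unique non-split sequence $\xi\colon 0 \to M_{[k,l]} \to E \to M_{[i,j]} \to 0$. The second step is to describe the middle term $E$ through the hammock functions attached to the two end terms: for a fixed indecomposable $M$, the assignments $[a,b] \mapsto \dim\Hom(M, M_{[a,b]})$ and $[a,b] \mapsto \dim\Hom(M_{[a,b]}, M)$ are additive (mesh) functions on $\AR(\Mod kD_n)$, each supported on a convex region, and the frames $F_s$ and $F_e$ are precisely the inner boundaries of these regions --- the vertices where the function first becomes nonzero under $\tau^{-1}$, respectively $\tau$. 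I would then knit $\xi$ mesh by mesh, starting from $M_{[k,l]}$ and ending at $M_{[i,j]}$, reading off the indecomposable summands of $E$ as the lattice points lying simultaneously on the starting frame $F_s([k,l])$ of the sub and the ending frame $F_e([i,j])$ of the quotient; multiplicity-freeness comes from each such point being hit exactly once in the mesh completion. As a cross-check on the bookkeeping I would confirm each candidate summand $M_{[a,b]}$ by applying $\Hom(M_{[a,b]},-)$ and $\Hom(-,M_{[a,b]})$ to $\xi$ and using the vanishing of the relevant $\Ext^1$-terms (again via the Auslander-Reiten formula and the one-dimensionality) to split the resulting long exact sequences.

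The step I expect to be genuinely delicate is specific to type $D_n$ rather than formal. Away from the branch vertex the hammocks and their frames behave as in type $A$, and the conclusion matches the geometric ``uncrossing'' of the two arcs governed by Lemma \ref{intersection number}: when $\overline{(i,j)}$ and $\overline{(k,l)}$ cross once, the middle term is assembled from the arcs spanning the four endpoints, exactly as in the Ptolemy axioms. Near the centre, however, the two exceptional vertices $[a,a+n]_{\pm}$ --- the signed modules corresponding combinatorially to the red and green diameters --- make both the hammock shape and the frame boundaries split, and the $\Hom$-spaces there need no longer be one-dimensional. I would handle this regime separately, computing the relevant Hom- and $\Ext^1$-spaces by hand directly from the coordinate system on $\AR(\Mod kD_n)$ (Figure \ref{fig:coordinate-system}) and keeping careful track of which of $[a,a+n]_+$ and $[a,a+n]_-$ enters a given frame. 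Finally I would sanity-check the whole statement against Lemma \ref{dimension of Ext}, whose one-dimensional extension space is precisely the kind of input this lemma is designed to process.
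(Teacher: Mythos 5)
Your attempt cannot be compared against an in-paper argument, because the paper does not prove this statement: it is imported verbatim from Buan--Marsh--Reineke--Reiten--Todorov (\cite{BMRRT}, Corollary 8.5), with the frames themselves read off from Bongartz's tables. So the question is whether your sketch would stand as a proof of the cited result. Your preliminary reductions are fine: the Auslander--Reiten formula $\Ext^1_{kD_n}(M_{[i,j]},M_{[k,l]}) \cong D\,\overline{\Hom}_{kD_n}(M_{[k,l]},\tau M_{[i,j]})$ is correctly stated for the hereditary algebra $kD_n$; one-dimensionality does give a non-split sequence $\xi$ whose middle term is well defined up to isomorphism (nonzero classes differ by a scalar); and your description of $F_s$ and $F_e$ as the $\tau$-boundaries of the Hom-hammocks matches the definition in the paper.

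The genuine gap is the central step. ``Knitting $\xi$ mesh by mesh'' is not a defined operation: mesh additivity computes almost split sequences, and $\xi$ is almost split only in the degenerate case $M_{[k,l]} \cong \tau M_{[i,j]}$. For a general extension with one-dimensional $\Ext^1$ there is no mesh recursion leading from the sub to the quotient, so your identification of the summands of the middle term $E$ with the vertices of $F_s([k,l]) \cap F_e([i,j])$ --- and in particular the multiplicity-one claim, which you justify only by the circular remark that each point is ``hit exactly once'' --- is asserted rather than proven. The content you would actually need is exactly what you demote to a ``cross-check'': apply $\Hom(X,-)$ to $\xi$ for every indecomposable $X$, compute the rank of each connecting map $\Hom(X,M_{[i,j]}) \to \Ext^1(X,M_{[k,l]})$ from the hammock data (this is where Bongartz's tables and the case analysis at the exceptional orbits $[a,a+n]_{\pm}$, which you rightly flag, genuinely enter), and then invoke the fact that in a Krull--Schmidt category with finitely many indecomposables the function $X \mapsto \dim\Hom(X,E)$ determines $E$ up to isomorphism. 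Note also that no shortcut via dimension vectors is available: $\operatorname{\underline{dim}} E = \operatorname{\underline{dim}} M_{[i,j]} + \operatorname{\underline{dim}} M_{[k,l]}$ does not pin down $E$ when $E$ fails to be rigid. As it stands your proposal is a correct plan whose decisive computation is left undone.
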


The starting and ending frames can be worked out using the tables from Section 1.3 in Bongartz's paper \cite{Bongartz}, cf.\;also Buan, Marsh, Reineke, Reiten and Todorov's paper \cite{BMRRT}. 
For a vertex $[i,j]$ of $\AR(\Mod kD_n)$ with $i < j < i+n$, they are given by: 
$$
F_s([i,j]) = \begin{aligned}
&\{[i,k] \in \AR(kD_n)_0| j \leq k \leq i+n-1\}\\
&\cup  \{[i,i+n]_+ , [i,i+n]_- \}\\ 
&\cup \{[k,j]\in \AR(kD_n)_0| i \leq k \leq j-2\} \\
 &\cup \{[k,i+n]\in \AR(kD_n)_0| j \leq k \leq  i+n-2\}
\end{aligned}
$$
and
$$
F_e((i,j)) = 
\begin{aligned}
&\{[i,k]\in \AR(kD_n)_0|i+2 \leq k \leq j\} \\ & \cup  \{[k,j]\in \AR(kD_n)_0| j-n+1 \leq k \leq i\}\\ & \cup \{ [j-n,j]_+ ,[j-n,j]_-\}\\
&  \cup  \{[j-n,k]\in \AR(kD_n)_0|j-n+2 \leq k \leq i\},
\end{aligned}
$$
cf.\;Figures \ref{fig:AR1} and \ref{fig:AR2}.
For the vertices $[i,i+n]_\pm$ the starting and ending frames are given by
$$
F_s([i,i+n]_\pm) = 
\begin{aligned}
& \{[k,i+n] \in \AR(kD_n)_0|i < k \leq i+n-2\} \\ & \cup \{[k,k+n]_\pm|i\leq k \text{ and } k - i = 0 \mod 2\} \\ & \cup \{[k,k+n]_\mp|i\leq k \text{ and } k - i = 1 \mod 2\}
\end{aligned}
$$
and
$$
F_e([i,i+n]_\pm) = 
\begin{aligned}
& \{[i,k] \in \AR(kD_n)_0|i+2 \leq k < i+n\} \\ & \cup \{[k,k+n]_\pm|i\geq k \text{ and } i - k = 0 \mod 2\} \\ & \cup \{[k,k+n]_\mp|i \geq k \text{ and } i - k = 1 \mod 2\},
\end{aligned}
$$
cf.\;Figures \ref{fig:AR3} and \ref{fig:AR4}.
\begin{figure}
\begin{centering}
\begin{tikzpicture} [scale=1, font = \footnotesize, font=\sansmath\sffamily
]

\fill[black!20] (3,2) -- (5,4) -- (9,4) -- (10.5,2.5) -- (8.5,0.5) -- (7,2) -- (5,0) -- (3,2) -- cycle;

\draw[dotted] (0,0) -- (4,4);
\draw[dotted] (4,4) -- (12,4);
\draw[dotted] (12,4) -- (8,0);
\draw[dotted] (0,0) -- (8,0);

\node (a) at (3,2) {$[i,j]$};
\node[label=right:{$[i,i+n]_+$}] (b) at (5,4) {};
\node (c) at (5,0) {$[j-2,j]$};
\node[label=right:{$[i,i+n]_-$}] (d) at (5,3.5) {};
\node (f) at (7.5,1.5) {$[j,i+n]$};

\draw (3,2)-- (5,4);
\draw (4.5,3.5) -- (5,3.5);
\draw (3,2) -- (5,0);
\draw (8.5,0.5) -- (7.5,1.5);

\end{tikzpicture}
\caption{The lines mark the starting frame of the vertex $[i,j]$ in $\AR(\Mod kD_n)$. The area into which there are non-trivial morphisms from the module $M_{[i,j]}$ is marked in grey}
\label{fig:AR1}
\end{centering}
\end{figure}
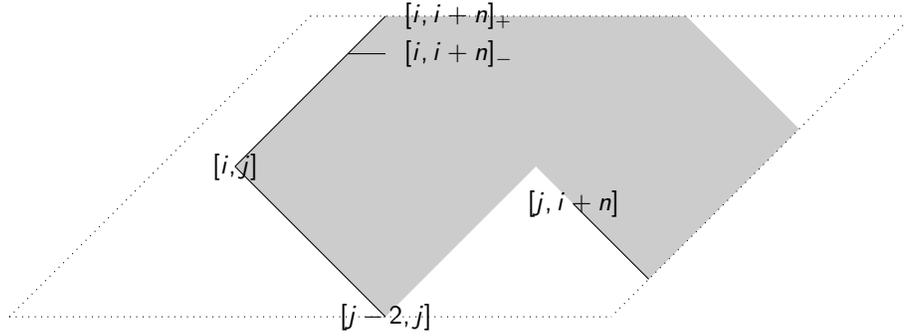

\begin{figure}
\begin{centering}
\begin{tikzpicture} [scale=1, font = \footnotesize, font=\sansmath\sffamily
]
\fill[black!20] (2,2) -- (4,4) -- (7,4) -- (9,2) -- (7,0) -- (5,2) -- (3,0) -- (1.5,1.5) -- cycle;
\draw[dotted] (0,0) -- (4,4);
\draw[dotted] (4,4) -- (12,4);
\draw[dotted] (12,4) -- (8,0);
\draw[dotted] (0,0) -- (8,0);

\node (a) at (9,2) {$[i,j]$};
\node[label=left:{$[j-n,j]_+$}] (b) at (7,4) {};
\node (c) at (7,0) {$[i,i+2]$};
\node[label=left:{$[j-n,j]_-$}] (d) at (7,3.5) {};
\node (e) at (3,0) {$[j-n,j-n+2]$};
\node (f) at (4.5,1.5) {$[j-n,i]$};

\draw (9,2)-- (7,4);
\draw (7.5,3.5) -- (7,3.5);
\draw (9,2) -- (7,0);
\draw (3,0.) -- (4.5,1.5);
\end{tikzpicture}
\caption{The lines mark the ending frame of the vertex $[i,j]$  in $\AR(\Mod kD_n)$. The area from which there are non-trivial morphisms into $M_{[i,j]}$ is marked in grey}
\label{fig:AR2}
\end{centering}
\end{figure}

\begin{figure}
\begin{centering}
\begin{tikzpicture} [scale=1, font = \footnotesize, font=\sansmath\sffamily
]

\draw[dotted] (0,0) -- (4,4);
\draw[dotted] (4,4) -- (12,4);
\draw[dotted] (12,4) -- (8,0);
\draw[dotted] (0,0) -- (8,0);

\node (a) at (5,4) {$[i,i+n]_+$};

\node (d) at (6,3.5) {$\bullet$};
\node (e) at (7,4) {$\bullet$};
\node (f) at (8,3.5) {$\bullet$};
\node (g) at (9,4) {$\bullet$};
\node (f) at (10,3.5) {$\bullet$};
\node (f) at (11,4) {$\bullet$};
\node (f) at (12,3.5) {$\bullet$};

\draw (8.5,0.5)-- (5,4);
\end{tikzpicture}
\caption{The line and the bullets mark the starting frame of the signed vertex $[i,i+n]_+$  in $\AR(\Mod kD_n)$}
\label{fig:AR3}
\end{centering}
\end{figure}

\begin{figure}
\begin{centering}
\begin{tikzpicture} [scale=1, font = \footnotesize, font=\sansmath\sffamily
]

\draw[dotted] (0,0) -- (4,4);
\draw[dotted] (4,4) -- (12,4);
\draw[dotted] (12,4) -- (8,0);
\draw[dotted] (0,0) -- (8,0);

\node (a) at (8,4) {$[i,i+n]_+$};
\node (c) at (4,0) {$[i,i+2]$};

\node (e) at (4,4) {$\bullet$};
\node (f) at (5,3.5) {$\bullet$};
\node (g) at (6,4) {$\bullet$};
\node (f) at (7,3.5) {$\bullet$};

\draw (4,0)-- (8,4);

\end{tikzpicture}
\caption{The line and the bullets mark the ending frame of the signed vertex $[i,i+n]_+$  in $\AR(\Mod kD_n)$}
\label{fig:AR4}
\end{centering}
\end{figure}

\begin{lemma}\label{L:triangles}
Let $\D$ be a non-crossing diagram of Dynkin type $D_n$ corresponding to the subcategory $D$ of $\CC_{D_n}$. Let $\overline{(x,y)} \in \nc \D \setminus \D$. Then there exists a distinguished triangle
$$
\xymatrix{m_{\overline{(x,y)}} \ar[r]& d \ar[r]& m_{\mu^-_\D(\overline{(x,y)})} \ar[r] & \Sigma m_{\overline{(x,y)}}}
$$
in $\CC_{D_n}$ where $d$ is an object of $D$.
\end{lemma}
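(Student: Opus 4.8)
The plan is to produce the required triangle as the \emph{unique} non-split extension guaranteed by Lemma~\ref{dimension of Ext}, and then to identify its middle term geometrically with a pair of sides of the relevant $\D$-cell. To set up, recall from Lemma~\ref{divides} that $\overline{(x,y)}$ is contained in a unique pair of $\D$-cells $\overline{\langle d_1, \ldots, d_k\rangle}$, with $r_\D(\overline{(x,y)}) = \overline{(d_i,d_j)}$ and $r_\D(\mu^-_\D(\overline{(x,y)})) = \overline{(d_{i-1},d_{j-1})}$; as recorded in the proof of Lemma~\ref{dimension of Ext}, the vertices occur in the anti-clockwise order $d_i, d_{j-1}, d_j, d_{i-1}$, so the two diagonals cross. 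Since $\CC_{D_n}$ is $2$-Calabi--Yau, Lemma~\ref{dimension of Ext} yields
$$
\dim \Ext^1_{\CC_{D_n}}(m_{\mu^-_\D(\overline{(x,y)})}, m_{\overline{(x,y)}}) = \dim \Ext^1_{\CC_{D_n}}(m_{\overline{(x,y)}}, m_{\mu^-_\D(\overline{(x,y)})}) = 1,
$$
so there is a unique non-split triangle $m_{\overline{(x,y)}} \to d \to m_{\mu^-_\D(\overline{(x,y)})} \to \Sigma m_{\overline{(x,y)}}$, and the whole problem reduces to showing that its middle term $d$ lies in $D$.

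The geometric heart of the matter is the claim that
$$
d \cong m_{r^{-1}_\D(\overline{(d_{i-1},d_i)})} \oplus m_{r^{-1}_\D(\overline{(d_{j-1},d_j)})},
$$
where a summand is read as the zero object whenever the corresponding side is an edge of $\p_{2n}$. Both $\overline{(d_{i-1},d_i)}$ and $\overline{(d_{j-1},d_j)}$ are sides of the $\D$-cell, hence lie in $r_\D(\D) \cup \mathcal{E}(\p_{2n})$; applying $r^{-1}_\D$ they lie in $\D \cup \mathcal{E}(\p_{2n})$, so each non-zero summand is an object of $D$ and therefore $d \in D$. The reason these two particular sides appear is that $\mu^-_\D$ rotates the interior of the cell clockwise: rotating the diagonal $(d_i,d_j)$ clockwise to $(d_{i-1},d_{j-1})$ sweeps its endpoints past exactly the sides $(d_{i-1},d_i)$ and $(d_{j-1},d_j)$, and in the cluster-category Ptolemy picture these swept sides (rather than the opposite pair of chords $(d_i,d_{j-1})$ and $(d_j,d_{i-1})$) form the middle term of the extension in this direction.

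To verify the claim rigorously I would compute the extension in the module category using frames. Choosing representatives in $D^b(kD_n)$ that are genuine $kD_n$-modules (which is possible after acting by an appropriate power of the autoequivalence $\tau^{-1}\Sigma$, since the combinatorial model is the orbit model), one arranges that the one-dimensional cluster extension is induced by a one-dimensional module extension. By Lemma~\ref{intersect frames} its middle term is then $\bigoplus_{[a,b] \in F_s \cap F_e} M_{[a,b]}$, where $F_s$ is the starting frame of the vertex of $\overline{(x,y)}$ and $F_e$ the ending frame of the vertex of $\mu^-_\D(\overline{(x,y)})$. Feeding the coordinates of the two crossing arcs into the explicit descriptions of the frames (Figures~\ref{fig:AR1}--\ref{fig:AR4}) one checks that this intersection consists of precisely the two vertices corresponding to $r^{-1}_\D(\overline{(d_{i-1},d_i)})$ and $r^{-1}_\D(\overline{(d_{j-1},d_j)})$, confirming the claim.

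The main obstacle is the case analysis hidden in this last step. One must treat separately the cases where $\overline{(x,y)}$ and $\mu^-_\D(\overline{(x,y)})$ are both pairs of arcs, where one of them is a coloured diameter, and where the cell is central so that $r_\D$ has replaced diameters by pairs of radii; in the diameter cases the frames $F_s([i,i+n]_\pm)$ and $F_e([i,i+n]_\pm)$ carry signs that must be matched, via the bijection $b$ of Definition~\ref{D:bijection}, to the correct colours of the resulting diameters, with Lemma~\ref{mutation changes colour} guaranteeing that two diameters appearing as the two diagonals always have different colours and hence genuinely cross once. Keeping the bookkeeping of signs, colours and radii consistent across these cases, while confirming that the frame intersection always returns the two cell sides and never the opposite pair of chords, is the delicate part of the argument.
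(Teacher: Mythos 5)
Your overall architecture coincides with the paper's: use Lemma \ref{dimension of Ext} (plus the $2$-Calabi--Yau property) to reduce to the essentially unique non-split triangle, then identify its middle term by intersecting starting and ending frames via Lemma \ref{intersect frames}, with a case analysis over arcs, diameters and radii. The reframing through uniqueness of the non-split extension is legitimate. However, the middle-term formula you commit to, $d \cong m_{r^{-1}_{\D}(\overline{(d_{i-1},d_i)})} \oplus m_{r^{-1}_{\D}(\overline{(d_{j-1},d_j)})}$ with edges read as zero, is false exactly in the diameter cases you defer to ``bookkeeping'', so the verification you outline would not close. Take $\D = \{\overline{(0,n)}_r\}$ and mutate $\overline{(x,y)} = \overline{(0,n)}_g \in \nc \D \setminus \D$: since $r_{\D}(\overline{(0,n)}_r) = \overline{(0,c)}_r$ is a pair of radii while $r_{\D}(\overline{(0,n)}_g) = \overline{(0,n)}_g$ stays a diameter, the relevant cell is $\langle 0,1,\ldots,n,c\rangle$ with $d_i = 0$, $d_j = n$, $d_{i-1} = c$, $d_{j-1} = n-1$, and $\mu^-_{\D}(\overline{(0,n)}_g) = \overline{(n-1,2n-1)}_r$, which is $\Sigma m_{\overline{(0,n)}_g}$; the unique non-split triangle is $m \to 0 \to \Sigma m \to \Sigma m$ with middle term $0$, whereas your formula returns the nonzero summand $r^{-1}_{\D}(\overline{(c,0)}_r) = \overline{(0,n)}_r$ for the swept radius side, which is not an edge. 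More generally, in the diameter-to-diameter case the paper's frame computation yields a \emph{single} summand $M_{[d_{j-1},n]}$ (the signed vertices drop out of $F_s \cap F_e$ precisely because, by Lemma \ref{mutation changes colour}, the two diameters have opposite colours), so one of your two swept sides never contributes; and conversely, when a swept side is a full diameter chord (the paper's sub-case $d_{i-1} = n$), the middle term contains \emph{both} coloured copies $m_{\overline{(0,n)}_r} \oplus m_{\overline{(0,n)}_g}$, not one object per side. Since the Ext-space is one-dimensional there is only one candidate triangle, so these discrepancies cannot be absorbed elsewhere: the identification of $d$ must be case-dependent, as in the paper's proof, and your asserted invariant that ``the frame intersection always returns the two cell sides'' is not true.

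A second genuine gap is the normalization step. Your claim that one can choose representatives in $D^b(kD_n)$ that are genuine modules, ``after acting by an appropriate power of $\tau^{-1}\Sigma$'', so that the cluster extension is induced by a module extension, fails in general: the counterexample above (and, more broadly, every instance where $m_{\mu^-_{\D}(\overline{(x,y)})} \cong \Sigma m_{\overline{(x,y)}}$, i.e.\ where both swept sides are edges) has a non-split triangle with zero middle term, and such a triangle is induced by no short exact sequence of $kD_n$-modules for any choice of representatives, so Lemma \ref{intersect frames} cannot reach it. This is why the paper, after normalizing $m_{\overline{(x,y)}}$ to be induced by a projective, runs a separate case for $m_{\mu^-_{\D}(\overline{(x,y)})}$ induced by the shift of a projective. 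Relatedly, even when both objects are modules, Proposition 1.7(d) of \cite{BMRRT} splits $\Ext^1_{\CC_{D_n}}$ into module extensions in the two opposite directions, and you must also verify that the nonzero part sits in the direction producing a triangle $m_{\overline{(x,y)}} \to d \to m_{\mu^-_{\D}(\overline{(x,y)})}$ rather than the reverse one, which instead computes $\mu_{\D}$; your sketch does not address this, while the paper's choice of normalization settles it.
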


Lemma \ref{L:triangles} can be derived from calculations in section 8 of Buan, Marsh, Reineke, Reiten and Todorov's paper \cite{BMRRT}. We provide a proof for the convenience of the reader.

\begin{proof}
First we note how we can use Lemma \ref{intersect frames} to calculate extensions. Assume that the object $m_{b([i,j])}$ in $\CC_{D_n}$ is induced by a projective $kD_n$-module $M_{[i,j]}$ and the object $m_{b([k,l])}$ in $\CC_{D_n}$ is induced by another $kD_n$-module $M_{[k,l]}$, where $b$ is the bijection from Definition \ref{D:bijection}. Then by a result by Buan, Marsh, Reineke, Reiten and Todorov (\cite{BMRRT}, Proposition 1.7(d)) we have
\begin{eqnarray*}
\Ext_{\CC_{D_n}}^1(m_{b([i,j])},m_{b([k,l])}) & \cong &\Ext_{kD_n}^1(M_{[i,j]},M_{[k,l]}) \oplus \Ext_{kD_n}^1(M_{[k,l]},M_{[i,j]})\\
& = & \Ext_{kD_n}^1(M_{[k,l]},M_{[i,j]}).
\end{eqnarray*}
Whenever the elements $b([i,j])$ and $b([k,l])$ in $\A(\p_{2n})$ intersect exactly once, this extension space is one-dimensional and we can apply Lemma \ref{intersect frames} to calculate short exact sequences in $\Mod kD_n$, which induce distinguished triangles in $\CC_{D_n}$. By abuse of notation we introduce additional vertices $[i,i+1] = b^{-1}(\overline{(i,i+1)})$ for pairs of edges $\overline{(i,i+1)} \in \mathcal{E}(\p_{2n})$, to which we associate the zero-module in $\Mod kD_n$, $M_{[i,i+1]} \cong 0$ and the zero-object in $\CC_{D_n}$, $m_{\overline{(i,i+1)}} \cong 0$.

Let now $\overline{(x,y)}$ and $\mu^-_{\D}(\overline{(x,y)})$ be contained in the pair of $\D$-cells $\overline{\langle d_1, \ldots, d_k \rangle}$ with $\overline{(x,y)} = r_{\D}^{-1}(\overline{(d_i,d_j)})$ and $\mu^-_{\D}\overline{(x,y)} = r_{\D}^{-1}(\overline{(d_{i-1},d_{j-1})})$ for some $i,j \in \{1, \ldots, k\}$. The vertices $d_i, d_{j-1},d_j,d_{i-1}$ appear in this order in an anti clockwise direction on the boundary of the $\D$-cell $\langle d_1, \ldots, d_k \rangle$.
Without loss of generality, we may assume that $m_{\overline{(x,y)}}$ is induced by a projective module, i.e.\;$d_i = 0$ and $d_j \in \{2 ,\ldots, n\} \cup \{c\}$, otherwise we obtain the desired distinguished triangle by shifting. 

First consider the case where $m_{\mu^-_{\D}(\overline{(x,y)})}$ is induced by the shift of a projective module, i.e.\;we can assume $d_{i-1} = 2n-1$ and $d_{j-1} \in \{1, \ldots, d_j-1\} \cup \{c\}$.
\begin{itemize}
\item{
Suppose that $\overline{(x,y)} = \overline{(d_i,d_j)}$ is a pair of arcs. We have $0 = d_i < d_{j-1} < d_j < n$. In particular, $\overline{(d_{j-1},d_j)}$ is a pair of arcs.
If $\overline{(d_{j-1},d_j)}$ is a pair of edges, then $m_{\mu_{\D}^-(\overline{(x,y)})} = \Sigma m_{\overline{(x,y)}}$ and we obtain the desired distinguished triangle
$$
m_{\overline{(x,y)}} \to 0 \to m_{\mu^-_{\D}(\overline{(x,y)})} \to \Sigma  m_{\overline{(x,y)}}.
$$
Otherwise, if $\overline{(d_{j-1},d_j)}$ is a pair of internal arcs, then $\overline{(0,d_{j-1}+1)}$ and $\overline{(d_{j-1},d_j)}$ cross precisely once. Intersecting the starting frame of the vertex $[0,d_{j-1}+1]$ with the ending frame of the vertex $[d_{j-1},d_j]$ in $\AR(\Mod kD_n)$ yields the short exact sequence
$$
0 \to M_{[0,d_{j-1}+1]} \to M_{[0,d_j]} \to M_{[d_{j-1},d_j]} \to 0, 
$$
which induces the distinguished triangle
$$
m_{\overline{(0,d_j)}} = m_{\overline{(x,y)}} \to m_{r^{-1}_{\D}\overline{(d_{j-1},d_j)}} \to m_{\overline{(2n-1, d_{j-1})}} = m_{\mu^-_{\D}(\overline{(x,y)})} \to \Sigma  m_{\overline{(x,y)}}
$$
in $\CC_{D_n}$, whose middle term lies in $D$. 
}
\item
{
Now suppose that $\overline{(x,y)} = \overline{(0,n)}$ is a diameter. If $\mu_{\D}^-(\overline{(x,y)})$ is also a diameter, then $m_{\mu_{\D}^-(\overline{(x,y)})} = \Sigma m_{\overline{(x,y)}}$ which yields the distinguished triangle
$$
m_{\overline{(x,y)}} \to 0 \to m_{\mu_{\D}^-(\overline{(x,y)})} \to \Sigma m_{\overline{(x,y)}},
$$
with middle term $0$ in $D$. If on the other hand $\mu_{\D}^-(\overline{(x,y)})$ is a pair of arcs, the pair of arcs $\overline{(0,d_{j-1}+1)}$ crosses the diameter $\overline{(d_{j-1}, d_{j-1}+n)}$ once and  intersecting the starting frame of the vertex $[0,d_{j-1}+1]$ with the ending frame of the vertex $[d_{j-1}, d_{j-1}+n]_\pm$ in $\AR(\Mod kD_n)$ yields the short exact sequence
$$
0 \to M_{[0,d_{j-1}+1]} \to M_{[0,n]_{**}} \to M_{[d_{j-1},d_{j-1}+n]_\pm} \to 0,
$$
where $[0,n]_{**}$ has the same sign as $[d_{j-1},d_{j-1}+n]_{\pm}$ if and only if $d_{j-1} \equiv 0 \mod 2$. Because the diameter $\overline{(x,y)}$ was mutated to a pair of arcs $\overline{(d_{i-1},d_{j-1})}$ we have $d_j = c$ by Remark \ref{diameters to arcs}. Thus the short exact sequence above gives rise to the distinguished triangle
$$
m_{\overline{(0,n)}} = m_{\overline{(x,y)}_{r,g}} \to m_{r_{\D}^{-1}(\overline{(d_{j-1},d_j)}_{r,g})} \to m_{\overline{(2n-1,d_{j-1})}} = m_{\mu^-_{\D}(\overline{(x,y)})} \to \Sigma m_{\overline{(x,y)}_{r,g}},
$$
in $\CC_{D_n}$, where $\overline{(x,y)}_{r,g}$ is of the same colour as $\overline{(d_{j-1},d_j)}_{r,g}$, so the middle term lies in $D$.
}
\end{itemize}
Now consider the case, where $m_{\mu_{\D}^-(\overline{(x,y)})}$ is induced by a $kD_n$-module. By Lemma \ref{dimension of Ext}, the elements $\overline{(x,y)}$ and $\mu_{\D}^-(\overline{(x,y)})$ cross exactly once, so we can apply Lemma \ref{intersect frames} to calculate extensions of $M_{b^{-1}(\mu_{\D}^-(\overline{(x,y)}))}$ by $M_{b^{-1}(\overline{(x,y)})}$ in $\Mod kD_n$.
\begin{itemize}
\item
{Suppose both $\overline{(x,y)}$ and $\mu_{\D}^-(\overline{(x,y)})$ are pairs of arcs. We have $0 = d_i < d_{j-1} < d_j < d_{i-1} < 2n-1$. Because $\overline{(d_i,d_j)}$ and $\overline{(d_{i-1},d_{j-1})}$ intersect exactly once, either $d_j+n \leq d_{i-1} < 2n-1$ or $d_j < d_{i-1} \leq n$. Assume first that $d_j+n \leq d_{i-1} < 2n-1$. We have $b^{-1}(\overline{(d_i,d_j)}) = [0,d_j]$ and $b^{-1}(\overline{(d_{i-1},d_{j-1})}) = [d_{i-1}+n,d_{j-1}+n]$. Intersecting the starting frame of the vertex $[0,d_j]$ with the ending frame of the vertex $[d_{i-1}+n, d_{j-1}+n]$ in $\AR(\Mod kD_n)$ yields the short exact sequence
$$
0 \to M_{[0,d_j]} \to M_{[d_{i-1}+n,n]} \oplus M_{[d_{j-1},d_j]} \to M_{[d_{i-1}+n,d_{j-1}+n]} \to 0,
$$
which induces the distinguished triangle
$$
m_{\overline{(x,y)}} \to m_{r^{-1}_{\D}(\overline{(d_{i-1},d_i)})} \oplus m_{r^{-1}_{\D}(\overline{(d_{j-1},d_j)})} \to \Sigma m_{\overline{(x,y)}}.
$$
The middle term of this distinguished triangle lies in $D$. 

On the other hand, assume  $d_j < d_{i-1} \leq n$. If $d_{i-1} \neq n$, then intersecting the starting frame of the vertex $[0,d_j]$ with the ending frame of the vertex $[d_{j-1},d_{i-1}]$ in $\AR(\Mod kD_n)$ yields the short exact sequence
$$
0 \to M_{[0,d_j]} \to M_{[d_{j-1},d_j]} \oplus M_{[0,d_{i-1}]}\to M_{[d_{j-1},d_{i-1}]} \to 0,
$$
which induces the distinguished triangle
$$
m_{\overline{(x,y)}} \to m_{r_{\D}^{-1}(\overline{(d_{j-1},d_j)})} \oplus m_{r_{\D}^{-1}(\overline{(d_i,d_{i-1})})} \to \Sigma m_{\overline{(x,y)}}
$$
with middle term in $D$. If on the other hand $d_{i-1} = n$, then we obtain the short exact sequence
$$
0 \to M_{[0,d_j]} \to M_{[d_{j-1},d_j]} \oplus M_{[0,n]_+} \oplus M_{[0,n]_-} \to M_{[d_{j-1},n]} \to 0.
$$
This induces the distinguished triangle
$$
m_{\overline{(x,y)}} \to m_{r_{\D}^{-1}(\overline{(d_{j-1},d_j)})} \oplus m_{r_{\D}^{-1}(\overline{(0,n)}_{r})} \oplus m_{r_{\D}^{-1}(\overline{(0,n)}_{g})} \to \Sigma m_{\overline{(x,y)}}.
$$
Because $d_i = 0$ and $d_{i-1} = n$ are neighbouring vertices of the $\D$-cell $\langle d_1, \ldots, d_k \rangle$, the red diameter $\overline{(0,n)}_r$ or the green diameter $\overline{(0,n)}_g$ have to lie in $\D$. Without loss of generality assume that $\overline{(0,n)}_r \in \D$. Then by definition of the replacement map $r_{\D}$, the green diameter $\overline{(0,n)}_g$ is also contained in $\D$. Thus the middle term of the distinguished triangle is an object in $D$.
}
\item
{
Suppose now $\overline{(x,y)}$ is a diameter and $\mu_{\D}^-(\overline{(x,y)})$ is a pair of arcs. By Remark \ref{diameters to arcs} we have $d_j = c$ and further $0 < d_{i-1}+n < d_{j-1} < n$. Let $b^{-1}(\overline{(x,y)}) = [0,n]_{*}$, where the sign depends on the colour of $\overline{(x,y)}$, and $b^{-1}(\overline{(d_{i-1},d_{j-1})}) = [d_{i-1}+n,d_{j-1}+n]$. Intersecting the starting frame of $[0,n]_*$ with the ending frame of $[d_{i-1}+n,d_{j-1}+n]$ yields the short exact sequence
$$
0 \to M_{[0,n]_*} \to M_{[d_{i-1}+n,n]} \oplus M_{[d_{j-1}, d_{j-1}+n]_{**}} \to M_{[d_{i-1}+n,d_{j-1}+n]} \to 0,
$$
where $[d_{j-1},d_{j-1}+n]_{**}$ has the same sign as $[d_i,d_i+n]_*$ if and only if $d_{j-1} \equiv 0 \mod 2$. This induces the distinguished triangle
$$
m_{\overline{(x,y)}_{r,g}} \to m_{r_{\D}^{-1}(\overline{(d_{j-1},d_j)}_{r,g})} \oplus m_{r_{\D}^{-1}(\overline{(d_i,d_{i-1})})} \to m_{\mu_{\D}^-\overline{(x,y)}}\to \Sigma m_{\overline{(x,y)}},
$$
where $\overline{(d_{j-1},d_j)}_{r,g}$ is of the same colour as $\overline{(x,y)}_{r,g}$. The middle term thus lies in $D$. Dual considerations show, that if $\overline{(x,y)}$ is a pair of arcs and $\mu_{\D}^-(\overline{(x,y)})$ is a diameter, we can find the desired distinguished triangle.
}
\item
{
Finally, suppose both $\overline{(x,y)}$ and $\mu_{\D}^-(\overline{(x,y)})$ are diameters. By Lemma \ref{mutation changes colour} they are of different colour. Let $b^{-1}(\overline{(x,y)}) = [0,n]_*$ and $b^{-1}((\mu_{\D}(\overline{(x,y)})))=[d_{i-1},d_{j-1}]_{**}$ be the associated vertices in the Auslander-Reiten quiver $\AR(\Mod kD_n)$. Intersecting the starting frame of the vertex $[0,n]_*$ with the ending frame of the vertex $[d_{j-1},d_{j-1}+n]_{**}$ yields the short exact sequence
$$
0 \to M_{[0,n]_*} \to M_{[d_{j-1},n]} \to M_{[d_{j-1},d_{j-1}+n]_{**}} \to 0,
$$
Because $r^{-1}_{\D}(\overline{(0,d_j)}) = \overline{(x,y)}$ is a diameter we either have $d_j = n$ or $d_j = c$. If $d_j = n$ we obtain $m_{r_{\D}^{-1}(\overline{(d_{j-1},n)})}$ $= m_{\overline{(d_{j-1},d_j)}}$. Otherwise, if $d_j = c$, then in particular $d_{j-1}\neq c$. So  because $\mu_{\D}^-(\overline{(x,y)})$ is a diameter we must have $d_{j-1} = d_{i-1}+n$. Thus we obtain $m_{r_{\D}^{-1}(\overline{(d_{j-1},n)})}= m_{\overline{(0, d_{i-1})}}$. The above short exact sequence induces the distinguished triangle
$$
m_{\overline{(x,y)}} \to m_{r_{\D}^{-1}(\overline{(d_{i-1},d_i)})} \to  m_{\mu_{\D}^-\overline{(x,y)}} \to \Sigma m_{\overline{(x,y)}}
$$
respectively the distinguished triangle
$$
m_{\overline{(x,y)}} \to m_{r_{\D}^{-1}(\overline{(d_{j-1},d_j)})} \to  m_{\mu_{\D}^-\overline{(x,y)}} \to \Sigma m_{\overline{(x,y)}}.
$$
In both triangles the middle term lies in $D$.
}
\end{itemize}
\end{proof}

\begin{theorem}\label{mutations agree general}
Let $\D$ be a non-crossing diagram of Dynkin type $D_n$ corresponding to a rigid subcategory $D$ of $\CC_{D_n}$. Let $\X$ and  $\X'$ be subdiagrams of $\nc \D$ with $\X$ corresponding to the subcategory $X$ and $\X'$ corresponding to the subcategory $X'$ of $\CC_{D_n}$. Then $(\X,\X')$ is a $\D$-mutation pair if and only if $(X,X')$ is a $D$-mutation pair.
\end{theorem}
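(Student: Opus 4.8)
The plan is to reduce the theorem to two containment statements that translate Lemma \ref{L:triangles} directly into the language of Definition \ref{D:mutation}, and then to upgrade containments to equalities by exploiting that both the combinatorial and the categorical mutations come in mutually inverse pairs. Before anything else I would record the dictionary between the categorical perp conditions and membership in $\nc \D$. Using $\Ext^1_{\CC_{D_n}}(a,b)=\Hom_{\CC_{D_n}}(a,\Sigma b)$ one has ${}^{\perp}(\Sigma D)=\{t\mid \Ext^1(t,D)=0\}$ and $(\Sigma^{-1}D)^\perp=\{t\mid \Ext^1(D,t)=0\}$. Since the crossing relation is symmetric (equivalently, by the $2$-Calabi--Yau property $\dim\Ext^1$ is symmetric) and, by Lemma \ref{intersection number}, counts $\dim\Ext^1$, the conditions $\Ext^1(m_{\overline{(u,v)}},D)=0$ and $\Ext^1(D,m_{\overline{(u,v)}})=0$ are equivalent and hold precisely when $\overline{(u,v)}$ crosses no element of $\D$. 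Hence both ${}^{\perp}(\Sigma D)$ and $(\Sigma^{-1}D)^\perp$ are the subcategory attached to $\nc \D$; in particular $\mu^-_D$ and $\mu_D$ both land in this subcategory.

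Next I would establish the two key containments: for every diagram $\X$ with $\D\subseteq \X\subseteq \nc \D$ and corresponding subcategory $X$, the subcategory attached to $\mu^-_{\D}(\X)$ lies in $\mu^-_D(X)$, and the one attached to $\mu_{\D}(\X)$ lies in $\mu_D(X)$. For an indecomposable generator $m_{\overline{(a,b)}}$ with $\overline{(a,b)}\in\X\setminus\D$, Lemma \ref{L:triangles} supplies a triangle $m_{\overline{(a,b)}}\to d\to m_{\mu^-_{\D}(\overline{(a,b)})}\to \Sigma m_{\overline{(a,b)}}$ with $d\in D$; as $\mu^-_{\D}(\overline{(a,b)})\in\nc \D$ the right-hand object lies in ${}^{\perp}(\Sigma D)$, so by Definition \ref{D:mutation} it belongs to $\mu^-_D(X)$. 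Applying Lemma \ref{L:triangles} instead to $\overline{(x,y)}=\mu_{\D}(\overline{(a,b)})\in\nc \D\setminus\D$ (whose $\mu^-_{\D}$-image is $\overline{(a,b)}$) yields $m_{\mu_{\D}(\overline{(a,b)})}\to d\to m_{\overline{(a,b)}}\to \Sigma m_{\mu_{\D}(\overline{(a,b)})}$, which is exactly the triangle placing $m_{\mu_{\D}(\overline{(a,b)})}$ in $\mu_D(X)$. Elements of $\D$ are handled by Lemma \ref{invariant}, since $\mu^{\pm}_{\D}$ fix $\D$ and $\mu^-_D(D)=\mu_D(D)=D$. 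Closure of $\mu^-_D(X),\mu_D(X)$ under sums and summands then promotes these statements on generators to the two containments, which I label (A) and (B).

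Finally I would assemble the equivalence. For the forward direction, if $(\X,\X')$ is a $\D$-mutation pair then, because $\mu_{\D}$ and $\mu^-_{\D}$ are mutually inverse bijections on $\nc \D$, one obtains $\X'=\mu^-_{\D}(\X)$ and $\X=\mu_{\D}(\X')$; together with $\D\subseteq\X,\X'$ the containments (A) and (B) give $D\subseteq X'\subseteq\mu^-_D(X)$ and $D\subseteq X\subseteq\mu_D(X')$, i.e.\ $(X,X')$ is a $D$-mutation pair. For the converse, if $(X,X')$ is a $D$-mutation pair then by Iyama and Yoshino's result (the Remark following Lemma \ref{invariant}, \cite{IY}) $X'=\mu^-_D(X)$ and $X=\mu_D(X')$; feeding these into (A) and (B) yields $\mu^-_{\D}(\X)\subseteq\X'$ and $\mu_{\D}(\X')\subseteq\X$, and applying the monotone bijection $\mu^-_{\D}$ to the latter gives $\X'\subseteq\mu^-_{\D}(\X)$. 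Thus $\X'=\mu^-_{\D}(\X)$ and $\X=\mu_{\D}(\X')$, so $(\X,\X')$ is a $\D$-mutation pair.

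The entire analytic difficulty is packaged in Lemma \ref{L:triangles}; once those triangles are available, the only genuine care required is the bookkeeping that matches the two triangle orientations of Definition \ref{D:mutation} with $\mu^-_{\D}$ and $\mu_{\D}$ (via the reindexing $\overline{(x,y)}=\mu_{\D}(\overline{(a,b)})$), together with the identification of both perp subcategories with $\nc \D$. I therefore expect the main obstacle to be purely organisational rather than computational: keeping the direction of arrows, the fixing of $\D$, and the distinction between a diagram and its associated subcategory straight throughout.
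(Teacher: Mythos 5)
Your proposal is correct and follows essentially the same route as the paper: both directions rest on the triangles of Lemma \ref{L:triangles} together with Lemma \ref{invariant}, the identification of $\leftidx{^\perp}{(\Sigma D)} = (\Sigma^{-1}D)^\perp$ with the subcategory attached to $\nc\D$ (which the paper uses implicitly), and the fact that $\mu_{\D}$, $\mu^-_{\D}$ are mutually inverse on $\nc\D$. Your converse is packaged slightly differently --- you feed the Iyama--Yoshino identities $X'=\mu^-_D(X)$, $X=\mu_D(X')$ into your containments (A), (B) and then apply the bijection $\mu^-_{\D}$, whereas the paper introduces the auxiliary subcategory $\tilde{X}$ corresponding to $\mu^-_{\D}(\X)$ and invokes the already-proved forward direction plus the same uniqueness remark --- but these are logically equivalent reorganisations of the same argument.
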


\begin{proof}

Assume that $(\X,\X')$ is a $\D$-mutation pair. We show that $(X,X')$ is a $D$-mutation pair, i.e.\;that $D \subset X \subset \mu_D(X')$ and $D \subset X' \subset \mu^-_D(X)$, cf.\;Definition \ref{D:mutation}. Because $\D \subset \X,\X'$ we have $D \subset X, X'$. By Lemma \ref{invariant} we have $D = \mu_D(D) \subset \mu_D(X') $ and $D = \mu^-_D(D) \subset \mu^-_D(X)$.

It remains to show that every object $m$ in $X$ which is not an object in $D$ is contained in $\mu^-_D(X)$ and that every object $m'$ in $X'$ which is not an object in $D$ is contained in $\mu_D(X')$. The indecomposable objects in $X$ but not in $D$ are of the form $m_{\overline{(x,y)}}$ with $\overline{(x,y)} \in \X \setminus \D$. The indecomposable objects in $X'$ but not in $D$ correspond to elements in $\X' \setminus \D = \mu^-_{\D}(\X \setminus \D)$ and are of the form $m_{\mu^-_{\D}(\overline{(x,y)})}$ for some $\overline{(x,y)} \in \X \setminus \D$.  

Since both $\overline{(x,y)}$ and $\mu^-_{\D}(\overline{(x,y)})$ lie in $\nc \D$, both  $m_{\overline{(x,y)}}$ and $m_{\mu^-_{\D}(\overline{(x,y)})}$ are objects in $^\perp(\Sigma D) = (\Sigma^{-1} D)^\perp$. Furthermore, by Lemma \ref{L:triangles}, for every element $\overline{(x,y)} \in \nc \D \setminus \D$ there is a distinguished triangle
\begin{eqnarray}\label{tr}
\xymatrix{m_{\overline{(x,y)}} \ar[r] & d \ar[r] & m_{\mu^-_{\D}(\overline{(x,y)})} \ar[r] & \Sigma m_{\overline{(x,y)}} },
\end{eqnarray}
with $d \in D$. This shows that all indecomposable objects in $X'$ lie in $\mu^-_D(X)$ and all indecomposable objects in $X$ lie in $\mu_D(X')$. Since direct sums of distinguished triangles are again distinguished triangles, this proves the first direction of the claim.

On the other hand, suppose $(X,X')$ is a $D$-mutation pair and let $\tilde{X}$ be the subcategory corresponding to the diagram $\mu^-_{\D}(\X)$. Then, because $(\X,\mu^-_{\D}(\X))$ is a $\D$-mutation pair, the pair $(X,\tilde{X})$ is a $D$-mutation pair, therefore $\tilde{X} = \mu^-_D(X) = X'$. So the diagram $\mu^-_{\D}(\X)$ corresponds to $X'$ and we get that $\mu^-_{\D}(\X) = \X'$ and $(\X,\X')$ is a $\D$-mutation pair.
\end{proof}

\begin{corollary} \label{mutations agree}
Let $(X,X^\perp)$ be a torsion pair in the cluster category $\CC_{D_n}$, with torsion part $X$ corresponding to the Ptolemy diagram $\X$ of Dynkin type $D_n$. Let $D \subset X \cap (\Sigma^{-1} X)^\perp$ be a subcategory corrresponding to the subdiagram $\D \subset \X \cap \nc  \X$. Then the $\D$-mutation $\mu_{\D}(\X)$ of $\X$ is a Ptolemy diagram of Dynkin type $D_n$ corresponding to the torsion part $\mu_D(X)$ of the mutated torsion pair $\mu_D(X,X^\perp)$. Similarly, the $\D$-mutation $\mu^-_{\D}(\X)$ of $\X$ is a Ptolemy diagram of Dynkin type $D_n$ corresponding to the torsion part $\mu^-_D(X)$ of the mutated torsion pair $\mu^-_D(X,X^\perp)$.
\end{corollary}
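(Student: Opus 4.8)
The plan is to combine three results already available: Zhou and Zhu's Theorem~\ref{Zhou-Zhu}, which ensures that the categorical mutation of a torsion pair is again a torsion pair; the Holm--J\o rgensen--Rubey classification (Theorem~\ref{classification of torsion pairs}), which identifies torsion parts of torsion pairs with Ptolemy diagrams; and Theorem~\ref{mutations agree general}, which matches $\D$-mutation pairs of diagrams with $D$-mutation pairs of subcategories. The corollary is then essentially a translation through the bijective correspondence between subcategories and diagrams (Remark~\ref{rotation-invariance}).

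First I would verify that $D$ satisfies the hypotheses of Theorem~\ref{Zhou-Zhu}. Since $\CC_{D_n}$ is $2$-Calabi--Yau with only finitely many indecomposable objects, every subcategory is functorially finite and the condition $\tau D = \Sigma D$ is automatic; together with the rigidity of $D$ and the hypothesis $D \subset X \cap (\Sigma^{-1} X)^{\perp}$, Theorem~\ref{Zhou-Zhu} applies and yields that $\mu_D(X,X^\perp) = (\mu_D(X), \mu_{\Sigma D}(X^\perp))$ is a torsion pair in $\CC_{D_n}$. In particular $\mu_D(X)$ is the torsion part of a torsion pair, so by Theorem~\ref{classification of torsion pairs} the diagram of Dynkin type $D_n$ corresponding to $\mu_D(X)$ is a Ptolemy diagram.

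The central point is to identify this diagram with $\mu_{\D}(\X)$. Here I would use that $\D \subset \nc \X$ is the same as $\X \subset \nc \D$, so that $\X$ and hence $\mu_{\D}(\X) \subset \nc \D$ are both subdiagrams of $\nc \D$. Since $\mu_{\D}$ and $\mu^-_{\D}$ are mutually inverse bijections on $\nc \D$ fixing $\D$ pointwise, the pair $(\mu_{\D}(\X), \X)$ satisfies the defining inclusions of a $\D$-mutation pair: indeed $\D = \mu_{\D}(\D) \subset \mu_{\D}(\X)$, $\D \subset \X$, and $\X = \mu^-_{\D}(\mu_{\D}(\X))$. Writing $X''$ for the subcategory corresponding to $\mu_{\D}(\X)$, Theorem~\ref{mutations agree general} shows $(X'',X)$ is a $D$-mutation pair, and the Iyama--Yoshino fact that $\mu_D$ and $\mu^-_D$ are mutually inverse (the remark following Definition~\ref{D:mutation}) forces $X'' = \mu_D(X)$. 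Thus the diagram corresponding to $\mu_D(X)$ is exactly $\mu_{\D}(\X)$, and combining with the previous paragraph, $\mu_{\D}(\X)$ is a Ptolemy diagram of Dynkin type $D_n$ corresponding to $\mu_D(X)$. The statement for $\mu^-_{\D}$ follows by the symmetric argument, applying the second half of Theorem~\ref{Zhou-Zhu} to $\mu^-_D(X,X^\perp)$ and forming the $\D$-mutation pair $(\mu^-_{\D}(\X),\X)$ with the roles reversed. I expect no real obstacle beyond bookkeeping, since all the genuine content has been isolated in Lemma~\ref{L:triangles} and Theorem~\ref{mutations agree general}; the only thing to watch is the direction of the inclusions and the sign conventions, to be sure the correspondence identifies $\mu_{\D}(\X)$ with $\mu_D(X)$ and not with $\mu^-_D(X)$.
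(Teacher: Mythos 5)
Your proposal is correct and takes essentially the same approach as the paper: the paper's proof likewise observes that $\D \subset \X \cap \nc \X$ gives $\X \subset \nc \D$ and then applies Theorem \ref{mutations agree general} to the $\D$-mutation pairs $(\X, \mu^-_{\D}(\X))$ and $(\mu_{\D}(\X), \X)$, with Theorem \ref{Zhou-Zhu} and Theorem \ref{classification of torsion pairs} supplying the torsion-pair and Ptolemy conclusions exactly as you describe. The details you spell out---checking the hypotheses of Zhou--Zhu, verifying the $\D$-mutation-pair inclusions, and using the Iyama--Yoshino mutual-inverse property to identify the subcategory corresponding to $\mu_{\D}(\X)$ with $\mu_D(X)$---are precisely what the paper's terse proof leaves implicit.
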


\begin{proof}
Because $\D$ is a subdiagram of $\X \cap \nc \X$, we have $\X \subset \nc \D$. Applying Theorem \ref{mutations agree general} to the $\D$-mutation pairs $(\X, \mu^-_{\D}(\X))$ and $(\mu_{\D}(\X), \X)$ yields the result.
\end{proof}



\end{document}